\newcommand{\comm}[1]{}
\DeclareRobustCommand{\SkipTocEntry}[5]{}
\newtheorem{satz}{Satz}[section]
\newtheorem{thm}[satz]{Theorem}
\newtheorem{lemma}[satz]{Lemma}
\newcounter{cl}[satz]
\newtheorem*{claim*}{Claim}
\newtheorem{claim}[cl]{Claim}
\newcounter{subcl}[cl]
\newtheorem{definition}[satz]{Definition}
\newtheorem{cor}[satz]{Corollary}
\newtheorem{question}{Question}
\newcommand{\thistheoremname}{}
\newtheorem*{genericthm}{\thistheoremname}
\newcounter{anhang}
\theoremstyle{definition}
\theoremstyle{remark}
\newtheorem*{remark}{Remark}
\numberwithin{subsection}{section}
\newcommand{\st}{\mid}
\newcommand{\ZFC}{\ensuremath{\operatorname{ZFC}} }
\newcommand{\ZF}{\ensuremath{\operatorname{ZF}} }
\newcommand{\GCH}{\ensuremath{\operatorname{GCH}} }
\newcommand{\CH}{\ensuremath{\operatorname{CH}} }
\newcommand{\Ord}{\ensuremath{\operatorname{Ord}} }
\newcommand{\HOD}{\ensuremath{\operatorname{HOD}} }
\newcommand{\HC}{\ensuremath{\operatorname{HC}} }
\newcommand{\OD}{\ensuremath{\operatorname{OD}} }
\newcommand{\TC}{\ensuremath{\operatorname{TC}} }
\newcommand{\AD}{\ensuremath{\operatorname{AD}} }
\newcommand{\crit}{\ensuremath{\operatorname{crit}} }
\newcommand{\Col}{\ensuremath{\operatorname{Col}} }
\newcommand{\lh}{\ensuremath{\operatorname{lh}} }
\newcommand{\Pot}{\mathcal{P}}
\newcommand{\forces}[2]{\Vdash^{#1}_{#2}}
\newcommand{\pwimg}{\, "}
\newcommand{\id}{\ensuremath{\operatorname{id}}}
\newcommand{\dom}{\ensuremath{\operatorname{dom}}}
\newcommand{\BS}{{}^\omega\omega}
\newcommand{\PI}{\boldsymbol\Pi}
\newcommand{\DELTA}{\boldsymbol\Delta}
\newcommand{\cQ}{\mathcal{Q}}
\newcommand{\cP}{\mathcal{P}}
\newcommand{\cM}{\mathcal{M}}
\newcommand{\cN}{\mathcal{N}}
\newcommand{\cT}{\mathcal{T}}
\newcommand{\cU}{\mathcal{U}}
\newcommand{\cF}{\mathcal{F}}
\newcommand{\cI}{\mathcal{I}}
\newcommand{\cR}{\mathcal{R}}
\newcommand{\cS}{\mathcal{S}}
\newcommand{\cG}{\mathcal{G}}
\newcommand{\bR}{\mathbb{R}}
\newcommand{\bP}{\mathbb{P}}
\newcommand{\bV}{\mathbb{V}}
\newcommand{\dlm}{\cM_{\infty}}
\newcommand{\tilextdlmp}{{\tilde\cM_{\infty}}^+}
\newcommand{\extdlm}{\tilde\cM_{\infty}}
\newcommand{\deltadlm}{\delta_{\infty}}
\newcommand{\kappadlm}{\kappa_{\infty}}
\newcommand{\dlmhat}{\hat{\cM}_{\infty}}
\newcommand{\HODMn}{\ensuremath{\operatorname{HOD}^{M_n(x)[g]}} }
\title{$\HOD$ in inner models with Woodin cardinals}
\subjclass[2010]{03E45, 03E60, 03E55} 
\keywords{HOD, Determinacy, Inner Model Theory, Large
  Cardinal, Woodin Cardinal, Mouse}
\author{Sandra M\"uller} 
\address{Sandra M\"uller, Institut f\"ur Mathematik, Universit\"at
  Wien. Kolingasse 14-16, 1090 Wien, Austria.}
\email{mueller.sandra@univie.ac.at} 
\thanks{Part of this
  work was done whilst the authors were visiting fellows at the Isaac
  Newton Institute for Mathematical Sciences in the programme
  \emph{Mathematical, Foundational and Computational Aspects of the
    Higher Infinite} (HIF) funded by EPSRC grant EP/K032208/1. The
  first author, formerly known as Sandra Uhlenbrock, was
  partially supported by FWF grant number P 28157 and in addition gratefully acknowledges funding from L'OR\'{E}AL Austria, in collaboration with the Austrian UNESCO Commission and in cooperation with the Austrian Academy of Sciences - Fellowship \emph{Determinacy and Large Cardinals}. The
  second author was partially supported by the NSF Career
  Award DMS-1352034.} 
\author{Grigor Sargsyan} 
\address{Grigor Sargsyan, Department of Mathematics,
  Rutgers University, Hill Center for the Mathematical Sciences, 110 Frelinghuysen Rd., Pisacataway, NJ 08854, USA} 
\email{grigor@math.rutgers.edu}
\date{\today}
\begin{document} 

\begin{abstract}
  We analyze the hereditarily ordinal definable sets $\HOD$ in
  $M_n(x)[g]$ for a Turing cone of reals $x$, where $M_n(x)$ is the
  canonical inner model with $n$ Woodin cardinals build over $x$ and
  $g$ is generic over $M_n(x)$ for the Lévy collapse up to its bottom
  inaccessible cardinal. We prove that assuming
  $\boldsymbol\Pi^1_{n+2}$-determinacy, for a Turing cone of reals
  $x$, $\HOD^{M_n(x)[g]} = M_n(\cM_{\infty} | \kappa_\infty, \Lambda),$
  where $\cM_\infty$ is a direct limit of iterates of $M_{n+1}$, $\delta_\infty$ is the least Woodin cardinal in $\cM_\infty$, $\kappa_\infty$ is the least inaccessible cardinal in $\cM_\infty$ above $\delta_\infty$, and $\Lambda$ is a partial iteration
  strategy for $\cM_{\infty}$. It will also be shown that under the
  same hypothesis $\HODMn$ satisfies $\GCH$.
\end{abstract}

\maketitle

\section{Introduction}
An essential question regarding the theory of inner models is the
analysis of the class of all hereditarily ordinal definable sets
$\HOD$ inside various inner models $M$ of the set theoretic universe
$V$ under appropriate determinacy hypotheses. Examples for such inner
models $M$ are $L(\mathbb{R})$, $L[x]$, and the canonical proper class
$x$-mouse with $n$ Woodin cardinals $M_n(x)$, but nowadays also larger
models of determinacy $M$ are considered.

One motivation for analyzing the internal structure of these models
$\HOD^M$ is given by Woodin's results in \cite{KW10} that under
determinacy hypotheses these models contain large cardinals. He showed
in \cite{KW10} for example that assuming $\Delta^1_2$ determinacy
there is a Turing cone of reals $x$ such that $\omega_2^{L[x]}$ is a
Woodin cardinal in the model $\HOD^{L[x]}$. This result generalizes to
higher levels in the projective hierarchy. That means for $n \geq 1$
assuming $\PI^1_{n+1}$ determinacy and $\Pi^1_{n+2}$ determinacy there
is a cone of reals $x$ such that $\omega_2^{M_{n}(x)}$ is a Woodin
cardinal in the model $\HOD^{M_{n}(x) | \delta_x}$, where $M_{n}(x)$
denotes the canonical proper class $x$-mouse with $n$ Woodin cardinals
and $\delta_x$ is the least Woodin cardinal in $M_{n}(x)$. Moreover,
Woodin showed a similar result for $\HOD^{L(\mathbb{R})}$. If we let
$\Theta$ denote the supremum of all ordinals $\alpha$ such that there
exists a surjection $\pi: \mathbb{R} \rightarrow \alpha$, then
assuming $\ZF + \AD$, he showed that $\Theta^{L(\mathbb{R})}$ is a
Woodin cardinal in $\HOD^{L(\mathbb{R})}$ (see \cite{KW10}). The fact
that these models of the form $\HOD^M$ can have large cardinals as for
example Woodin cardinals motivates the question if they are in some
sense fine structural as for example the models $L[x], M_n(x)$, and
$L(\mathbb{R})$ are. A good test question for this is whether these
models $\HOD^M$ satisfy the generalized continuum hypothesis
$\GCH$. If it turns out that $\HOD^M$ is in fact a fine structural
model, it would follow that it satisfies the $\GCH$ and even stronger
combinatorial principles as for example the $\Diamond$ principle.

The first model which was analyzed in this sense was
$\HOD^{L(\mathbb{R})}$ under the assumption that every set of reals in
$L(\mathbb{R})$ is determined (short: $\AD^{L(\mathbb{R})}$). Using
purely descriptive set theoretic methods Becker showed in \cite{Be80}
under this hypothesis that $\GCH_\alpha$, i.e. $2^\alpha = \alpha^+$,
holds in $\HOD^{L(\mathbb{R})}$ for all
$\alpha < \omega_1$. Later J. R. Steel and
W. H. Woodin were able to push the analysis of $\HOD^{L(\mathbb{R})}$
forward using more recent advances in inner model theory. In 1993 they
first showed independently that the reals in $\HOD^{L(\mathbb{R})}$
are the same as the reals in $M_\omega$, the least proper class
iterable premouse with $\omega$ Woodin cardinals. Then they showed in
§$4$ of \cite{St93} that $\HOD^{L(\mathbb{R})}$ in fact agrees with
the inner model $N$ up to $\Pot(\omega_1)$, where $N$
denotes the $\omega_1$-th linear iterate of $M_\omega$
by its least measure and its images. Building on this, John R. Steel
was able to show in \cite{St95} that $\HOD^{L(\mathbb{R})}$ agrees
with the inner model $\cM_\infty$ up to
$(\boldsymbol\delta_1^2)^{L(\mathbb{R})}$, where $\cM_\infty$ is a
direct limit of iterates of $M_\omega$ and
$(\boldsymbol\delta_1^2)^{L(\mathbb{R})}$ is the supremum of all
ordinals $\alpha$ such that there exists a surjection
$\pi: \mathbb{R} \rightarrow \alpha$ which is
$\DELTA_1^{L(\mathbb{R})}$ definable. Finally, in 1996 W. Hugh Woodin
extended this (see \cite{StW16}) and showed that in fact
$\HOD^{L(\mathbb{R})} = L[\cM_\infty, \Lambda]$, where $\Lambda$ is a
partial iteration strategy for $\cM_\infty$. For even larger models of
determinacy $M$ the corresponding model $\HOD^M$ was first analyzed in
\cite{Sa09}, where the second author showed that it is fine structural
using a layered hierarchy. Models of this form are nowadays called
\emph{hod mice}. A different approach for the fine structure of hod
mice called the least branch hierarchy is studied in \cite{St16}.

The question if $\HOD^{L[x]}$ is a model of $\GCH$ or even a fine structural model for a Turing cone of reals $x$ under a suitable determinacy hypothesis remains open until today. What has been done is the analysis of the model $\HOD^{L[x][G]}$, where $G$ is $\Col(\omega, {<}\kappa_x)$-generic over $\HOD^{L[x]}$ for the least inaccessible cardinal $\kappa_x$ in $L[x]$. Woodin showed in the $1990$'s (see \cite{StW16}) that assuming $\Delta^1_2$ determinacy there is a Turing cone of reals $x$ such that $\HOD^{L[x][G]} = L[\cM_\infty, \Lambda]$, where $\cM_\infty$ is a direct limit of mice (which are iterates of $M_1$) and $\Lambda$ is a partial iteration strategy for $\cM_\infty$.

In this article, we analyze $\HOD$ in the model $M_n(x)[g]$ for any real $x$ of sufficiently high Turing degree under the assumption that every $\PI^1_{n+2}$ set of reals is determined. Here $g$ is $\Col(\omega, {<}\kappa)$-generic over $M_n(x)$, where $\kappa$ denotes the least inaccessible cardinal in $M_n(x)$. We first show that the direct limit model $\dlm$, obtained from iterates of suitable premice, agrees up to its bottom Woodin cardinal $\deltadlm$ with $\HOD^{M_n(x)[g]}$. In a second step, we show that the full model $\HOD^{M_n(x)[g]}$ is in fact of the form $M_n(\dlmhat | \kappa_\infty, \Lambda)$, where $\dlmhat = M_n(\dlm | \deltadlm)$, $\kappa_\infty$ is the least inaccessible cardinal of $\dlmhat$ above $\deltadlm$, and $\Lambda$ is a partial iteration strategy for $\dlm$. Here and below $M_n(\dlmhat|\kappadlm, \Lambda)$ denotes the canonical fine structural model with $n$ Woodin cardinals build over the coarse objects $\dlmhat|\kappadlm$ and $\Lambda$. Our proof in fact shows that $\HOD^{M_n(x)[g]}$ is a model of $\GCH$, $\Diamond$, and other combinatorial principles which are consequences of fine structure.

In the statement of the following main theorem and in fact everywhere
in this article whenever we write $\HOD^M$ for some premouse $M$ we
mean $\HOD^{\lfloor M \rfloor}$, where $\lfloor M \rfloor$ denotes the
universe of the model $M$. In particular, we do not allow the extender
sequence of $M$ as a parameter in the definition of $\HOD$. It will be clear from the context if
we consider the model $M$ or the universe $\lfloor M \rfloor$ of $M$,
therefore we decided for the sake of readability to not distinguish
the notation for these two objects.

The main result of this paper is the following theorem.

\begin{thm}\label{thm:main}
  Let $n < \omega$ and assume
  $\boldsymbol\Pi^1_{n+2}$-determinacy. Then for a Turing cone of
  reals $x$,
  \[ \HOD^{M_n(x)[g]} = M_n(\dlmhat|\kappadlm, \Lambda), \] where $g$ is
  $\Col(\omega, {<}\kappa)$-generic over $M_n(x)$, $\kappa$ denotes
  the least inaccessible cardinal in $M_n(x)$,
  $\dlmhat$ is a direct limit of iterates of
  $M_{n+1}$, $\deltadlm$ is the least Woodin
  cardinal in $\dlmhat$, $\kappadlm$ is the least inaccessible cardinal of $\dlmhat$ above $\deltadlm$, and $\Lambda$ is a partial iteration strategy
  for $\dlm$.
\end{thm}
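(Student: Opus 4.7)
The plan is to follow the template of Woodin's classical analysis of $\HOD^{L[x][G]}$ (the $n=0$ case) and generalise each of its ingredients from $M_1$ to $M_{n+1}$, using the $M_n$-operator to supply $\cQ$-structures below the bottom Woodin of the iterates. First I would isolate the right notion of \emph{$n$-suitable premouse}: a countable premouse $\cN$ of the same shape as an initial segment of $M_{n+1}$, chosen so that for every iteration tree $\cT$ on $\cN$ with $\delta(\cT)$ below the would-be top Woodin, the $\cQ$-structure $\cQ(\cT)$ is obtained by feeding $\cM(\cT)$ into the $M_n$-operator. Existence and uniqueness of these $\cQ$-structures, and of the associated canonical iteration strategies, should follow from $\PI^1_{n+2}$-determinacy via the standard mice-from-determinacy correspondence. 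The directed system then consists of $n$-suitable $\cN$ that are countable in $M_n(x)[g]$, with maps coming from the unique strategies, and $\dlm$ is its direct limit; $\dlmhat = M_n(\dlm|\deltadlm)$ is obtained by closing $\dlm|\deltadlm$ under the $M_n$-operator.

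Once the system is set up, standard genericity-iteration and comparison arguments make it directed and show that $(\dlm,\Lambda)$ is independent of the Turing degree of $x$ on a cone and of the generic $g$. Consequently $\dlmhat|\kappadlm$ and $\Lambda$ are OD in $M_n(x)[g]$, which gives the easy inclusion $M_n(\dlmhat|\kappadlm, \Lambda) \subseteq \HOD^{M_n(x)[g]}$ by absoluteness of the fully backgrounded construction defining $M_n(\,\cdot\,)$ over a coarse object together with a predicate.

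The hard direction is $\HOD^{M_n(x)[g]} \subseteq M_n(\dlmhat|\kappadlm, \Lambda)$. The approach is a Vopenka argument: every real of $M_n(x)[g]$ should be generic over $\dlmhat$ for an appropriate extender-algebra style forcing at $\deltadlm$, and by combining this with the analysis of the canonical $\Col(\omega,{<}\kappadlm)$-collapse inside $\dlmhat$ and the homogeneity of that collapse, one shows that $M_n(x)[g]$ sits inside a symmetric collapse extension of $M_n(\dlmhat|\kappadlm, \Lambda)$. Intersecting Boolean values of OD-definitions over the Vopenka algebra then forces every OD set of $M_n(x)[g]$ to be computed from $\dlmhat|\kappadlm$, $\Lambda$, and an ordinal parameter, which places $\HOD^{M_n(x)[g]}$ inside $M_n(\dlmhat|\kappadlm, \Lambda)$. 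The $\GCH$ and $\Diamond$ conclusions then follow from the fine-structural nature of the right-hand side.

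The central obstacle will be at the setup stage: one must verify that $\cQ$-structures provided by the $M_n$-operator really do select unique branches in every relevant iteration tree on an $n$-suitable $\cN$, that the induced partial strategy $\Lambda$ is simultaneously OD in $M_n(x)[g]$ and rich enough for Neeman-style genericity iterations to absorb arbitrary reals of $M_n(x)[g]$, and that the Boolean-valued genericity comparisons land inside $n$-suitable iterates rather than producing extraneous extenders above $\deltadlm$. Once this descriptive-inner-model-theoretic infrastructure is in place, the remaining Vopenka and homogeneity steps are, in broad strokes, a careful transcription of Woodin's argument for the $n=0$ case, with the $n$ lower Woodins of each iterate treated as part of the coarse background.
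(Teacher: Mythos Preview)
Your outline has the right overall shape but two concrete gaps. First, at the setup stage, you cannot build the directed system from ``unique strategies'' for $n$-suitable $\cN$: the $\cQ$-structures supplied by the $M_n$-operator determine branches only through \emph{short} trees, whereas for \emph{maximal} trees (those where $\delta(\cT)$ stays Woodin in the $Lp^n$-closure of $\cM(\cT)$) no canonical branch is definably available inside $M_n(x)[g]$. The paper resolves this via \emph{$s$-iterability}: for each finite $s \in [\Ord]^{<\omega}$ one only demands branches moving certain theory predicates $T^{\cN}_{s,k}$ correctly, proves these induce a unique embedding on the hull $H^{\cN}_s$, and takes the direct limit of these hulls rather than of the premice themselves. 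Placing $\dlm$ into $\HODMn$ likewise needs more than ``independence of $x$ on a cone'': one needs an internal definition not mentioning $x$, supplied in the paper by the notion of \emph{dlm-suitable} $M(y)$ together with the verification that $\dlm^{M(y)} = \dlm$ for every such $M(y)$.

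Second, and more seriously, your mechanism for the hard inclusion does not work as stated. One has $\deltadlm = (\kappa^+)^{M_n(x)} = \omega_2^{M_n(x)[g]}$, so $\dlmhat$ is uncountable in $M_n(x)[g]$ and no extender-algebra genericity iteration over it is available there; nor is $M_n(x)[g]$ literally a symmetric collapse extension of $\dlmhat$ --- after collapsing below $\kappadlm$ the two models are merely elementarily equivalent, not one a forcing extension of the other. What replaces your proposed shortcut is a \emph{derived model resemblance} lemma: for $\xi < \deltadlm$ and $t \in [\Ord]^{<\omega}$, one shows $\dlmhat \vDash \varphi(\xi, t^*)$ iff a corresponding statement holds in cofinally many correct iterates inside the system, where $\alpha \mapsto \alpha^*$ is the shift by the direct-limit map $\pi_\infty$. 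The Vop\v{e}nka step is then applied to $\HODMn$ itself rather than to $\dlmhat$: one makes $x$ generic over $\HODMn$ for the Vop\v{e}nka algebra $\bV$, uses a $\cP$-construction to show $M_n(\bV)[G_x] = M_n(x)$ and hence $\HODMn = M_n(\bV)$, and finally invokes the resemblance lemma (together with the identity $\pi_\infty(\eta) = \eta^*$ for $\eta < \deltadlm$) to compute $\bV$ from $\dlm|\deltadlm$ and $\pi_\infty \upharpoonright \deltadlm$. This resemblance lemma is the technical heart of the argument and is not something a homogeneity/Boolean-value computation over a single symmetric extension can replace.
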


Our proof in fact shows the following corollary.

\begin{cor}\label{cor:main}
  Assume $\boldsymbol\Pi^1_{n+2}$-determinacy. Then for a Turing cone
  of reals $x$, \[\HODMn \vDash \GCH,\] where $g$ is
  $\Col(\omega, {<}\kappa)$-generic over $M_n(x)$ and $\kappa$ denotes
  the least inaccessible cardinal in $M_n(x)$.
\end{cor}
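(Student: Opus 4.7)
The plan is to deduce the corollary directly from Theorem \ref{thm:main}, reducing $\GCH$ in $\HOD^{M_n(x)[g]}$ to the standard fine structural $\GCH$ for premice built over coarse objects. By Theorem \ref{thm:main}, for a Turing cone of reals $x$ we have
\[
\HOD^{M_n(x)[g]} \;=\; M_n(\dlmhat|\kappadlm, \Lambda),
\]
so it suffices to verify that the right-hand side satisfies $\GCH$ as a model of $\ZFC$.

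First, I would recall that $M_n(\dlmhat|\kappadlm, \Lambda)$ is, by construction, the canonical proper class fine structural premouse with $n$ Woodin cardinals built over the coarse object $\dlmhat|\kappadlm$ together with the partial iteration strategy $\Lambda$ coded as a predicate. In particular, it is acceptable in the sense of Jensen, satisfies the usual fine structural condensation lemma (of the type needed for $M_n$-like premice relativized to a coarse base), and every proper initial segment projects nicely.

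Second, I would apply the standard fine structural argument. Fix any infinite cardinal $\lambda$ of $M_n(\dlmhat|\kappadlm, \Lambda)$ with $\lambda \geq \kappadlm$. Every subset of $\lambda$ in the model appears in some level $\cN$ of the premouse with $\ord(\cN) < \lambda^{+,M_n(\dlmhat|\kappadlm, \Lambda)}$. By acceptability and condensation, each such $\cN$ has cardinality at most $\lambda$ in the model, and the collection of such $\cN$'s is indexed by ordinals below $\lambda^+$. Hence $2^\lambda \leq \lambda^+$ in $M_n(\dlmhat|\kappadlm, \Lambda)$, giving $\GCH$ at $\lambda$. For $\lambda < \kappadlm$ one uses the same argument, together with the fact that the coarse base $\dlmhat|\kappadlm$ is itself a fine structural premouse (indeed $\dlmhat = M_n(\dlm|\deltadlm)$), so $\GCH$ below $\kappadlm$ is inherited from the fine structure of $\dlmhat$. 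Combining these two cases yields $\GCH$ globally in $\HOD^{M_n(x)[g]}$.

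The main obstacle, which is already absorbed into Theorem \ref{thm:main}, is verifying that $\Lambda$ is coded as a predicate in a way compatible with the fine structural machinery, i.e.\ that the operator $A \mapsto M_n(A, \Lambda)$ satisfies the relativization and condensation properties one needs in order to run the argument above. Once Theorem \ref{thm:main} is in hand, the deduction of $\GCH$ (and, by the same token, $\Diamond$ and the other standard consequences of fine structure) is essentially immediate from the fine structural setup of $M_n$-like mice over coarse objects.
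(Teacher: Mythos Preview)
Your deduction is correct in spirit, but it does not match the paper's route. The paper does \emph{not} derive Corollary~\ref{cor:main} from Theorem~\ref{thm:main}; it derives it from the earlier intermediate results, namely Theorem~\ref{thm:HODuptoDelta} ($V_{\deltadlm}^{\HODMn}=V_{\deltadlm}^{\dlm}$) together with Lemma~\ref{lem:HODeqMn(A)} ($\HODMn=M_n(A)$ for some $A\subseteq\omega_2^{M_n(x)[g]}=\deltadlm$). With those two facts in hand, $\GCH$ at cardinals $\geq\deltadlm$ is the usual fine structural $\GCH$ for $M_n(A)$ (the base lives below $\deltadlm$, so acceptability applies from $\deltadlm$ on), and $\GCH$ below $\deltadlm$ is read off from $\dlm$, which is a $\Sigma_{M_{n+1}^-}$-iterate and hence already satisfies $\GCH$. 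This is cleaner precisely because the base object is small: there is no strategy predicate to worry about and no intermediate interval $[\deltadlm,\kappadlm)$ to treat separately.

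Your path via Theorem~\ref{thm:main} also works, but it carries more overhead. Going through the larger base $(\dlmhat|\kappadlm,\Lambda)$ means the off-the-shelf acceptability argument only gives $\GCH$ from $\kappadlm$ up, and you then need the additional observation that $V_{\kappadlm}^{M_n(\dlmhat|\kappadlm,\Lambda)}=\dlmhat|\kappadlm$ (i.e., no level of the relativized construction projects below the base) so that $\GCH$ below $\kappadlm$ really is inherited from $\dlmhat$. You gesture at this (``already absorbed into Theorem~\ref{thm:main}''), and it is true under the standard conventions for premice over coarse bases, but it is exactly the point the paper sidesteps by using $M_n(A)$ with $A\subseteq\deltadlm$ instead. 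In short: both arguments are sound; the paper's is shorter because it invokes Lemma~\ref{lem:HODeqMn(A)} rather than the full Theorem~\ref{thm:main}.
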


\begin{remark}
  In fact the full strength of $\PI^1_{n+2}$-determinacy is not needed
  for these results. It suffices to assume that $M_n^\#(x)$ exists and
  is $\omega_1$-iterable for all reals $x$ (or equivalently
  $\PI^1_{n+1}$-determinacy, see \cite{MSW20} and \cite{Ne02}) and that
  $M_{n+1}^\#$ exists and is $\omega_1$-iterable. This is all we will
  use in the proof.
\end{remark}

Finally, we summarize some open questions related to these results. The following question already appears in \cite{StW16}.

\begin{question}
  Assume $\DELTA^1_2$ determinacy. Is $\HOD^{L[x]}$ for a cone of reals
  $x$ a fine structural model?
\end{question}

\begin{question}
  Assume $\PI^1_{n+2}$ determinacy. Is $\HOD^{M_n(x)}$ for a cone of
  reals $x$ a fine structural model?
\end{question}

This article is structured as follows. In Section \ref{sec:prelim} we
recall some preliminaries and fix the basic notation. In Section
\ref{sec:dlm} we recall the relevant notions from \cite{Sa13} and
define the direct limit system converging to $\dlm$, before we compute
$\HODMn$ up to its Woodin cardinal in Section \ref{sec:dmr}. In
Section \ref{sec:fullHOD} we then show how this can be used to compute
the full model $\HODMn$, i.e., we finish the proof of Theorem
\ref{thm:main}. The authors thank Farmer Schlutzenberg for the helpful
discussions during the 4th Münster conference on inner model theory
in the summer of 2017. Finally, the authors thank the referee for
carefully reading the paper and making several helpful comments and
suggestions.

\section{Preliminaries and notation}\label{sec:prelim}

Whenever we say \emph{reals} we mean elements of the Baire space
$\BS$. We also write $\bR$ for $\BS$. $\HOD$ denotes the class of all
hereditarily ordinal definable sets. Moreover $\HOD_x$ for any
$x \in \BS$ denotes the class of all sets which are hereditarily
ordinal definable over $\{x\}$.\footnote{In the literature this is
  sometimes also called $\HOD_{\{x\}}$.} That means we let
$A \in \OD_x$ iff there is a formula $\varphi$ such that
$A = \{ v \st \varphi(v,\alpha_1, \dots, \alpha_n,x) \}$ for some
ordinals $\alpha_1, \dots, \alpha_n$. Then $A \in \HOD_x$ iff
$\TC(\{A\}) \subset \OD_x$, where $\TC(\{A\})$ denotes the transitive
closure of the set $\{A\}$.

We use the notions of premice and iterability from \cite[§$1-4$]{St10}
and assume that the reader is familiar with the basic concepts defined
there. In most cases we will demand $(\omega,\omega_1,\omega_1)$-iterability in the sense of Definition $4.4$ in \cite{St10} for our mice, but in other cases or if it is not clear from the context we will state the precise amount of iterability. We say a
\emph{cutpoint} of a premouse $\cM$ is an infinite ordinal $\gamma$
such that there is no extender $E$ on the $\cM$-sequence with
$\crit(E) \leq \gamma \leq \lh(E)$.\footnote{Such a cutpoint $\gamma$
  is often also called a strong cutpoint.} 

For some $\ZFC$ model $M$ and some real
$x \in M$ we write $L[E](x)^M$ for the result of a fully
backgrounded extender construction above $x$ inside $M$ in the sense
of \cite{MS94}, with the minimality condition relaxed to $\omega$-small premice. Moreover, we let for a premouse $\cM$ with $\cM \vDash \ZFC$, a cardinal cutpoint $\eta$ of $\cM$, and a premouse $\cN$ of height $\eta$ such that $\cN \in \Pot(\cM | \eta) \cap \cM | (\eta + \omega)$, $\cP^\cM(\cN)$ denote the
result of a $\cP$-construction over $\cN$ inside the model $\cM$ in
the sense of \cite{SchSt09} or \cite[Proposition 2.3 and Definition
2.4]{Sa13}.

For $x \in \BS$ and $n \leq \omega$ we let $M_n^\#(x)$, if it exists,
denote a countable, sound, $\omega_1$-iterable $x$-premouse which is
not $n$-small but all of whose proper initial segments are
$n$-small. In fact, $\omega_1$-iterability suffices to show that such
an $M_n^\#(x)$ is unique. If $M_n^\#(x)$ exists, we let $M_n(x)$ be
the proper class premouse obtained by iterating the top extender of
$M_n^\#(x)$ out of the universe.

\section{The direct limit system}\label{sec:dlm}

To show that $\HODMn$ is a fine structural inner model, we will use an
extension of the direct limit system introduced in \cite{Sa13}. For
the reader's convenience we will first recall the relevant definitions
and results from \cite{Sa13}, obtaining a direct limit system which is
definable in $M_n(x)$. We use the chance to correct some minor errors
in the presentation of that direct limit system in \cite{Sa13}. Then
we discuss the changes we need to make to obtain a direct limit system
definable in $M_n(x)[g]$. Another application of a similar but slightly
different direct limit system as in \cite{Sa13} can be found in
\cite{SaSch18}.

Fix an arbitrary natural number $n$. Throughout the rest of this article we will assume that $M_{n+1}^\#$ exists and is $(\omega,\omega_1,\omega_1)$-iterable and fix a real $x$ that codes $M_{n+1}^\#$. This implies $\PI^1_{n+1}$ determinacy or equivalently that
$M_{n}^\#(z)$ exists and is $(\omega, \omega_1, \omega_1)$-iterable for all reals $z$ (see
\cite{Ne95} and \cite{MSW20} for a proof of this equivalence due to Itay
Neeman and W. Hugh Woodin). Finally, we fix a $\Col(\omega, {<}\kappa)$-generic $g$ over $M_n(x)$, where $\kappa$ is the least inaccessible cardinal in $M_n(x)$.

\subsection*{The first direct limit system}

We first recall the definition of a lower part premouse.

\begin{definition}
  Let $a$ be a countable, transitive, self-wellordered\footnote{We say
  a transitive set $a$ is \emph{self-wellordered} iff $a$ is
  wellordered in $L_{\omega}[a]$.} set. Then we
define the lower part model $Lp^n(a)$ as the model theoretic union of all countable
$a$-premice $\cM$ with $\rho_\omega(\cM) = a$ which are $n$-small, sound,
and $(\omega, \omega_1, \omega_1)$-iterable. 
\end{definition}

If $\cN$ is a countable premouse, we also use $Lp^n(\cN)$ to denote
the premouse extending $\cN$ which is defined similarly as the model
theoretic union of premice $\cM \unrhd \cN$ with $\rho_\omega(\cM) \leq \cN \cap \Ord$ which have $\cN \cap \Ord$ as a cutpoint, are $n$-small above $\cN \cap \Ord$, sound above $\cN \cap \Ord$, and $(\omega, \omega_1, \omega_1)$-iterable above $\cN \cap \Ord$. In case $\cM \unrhd \cN$ has a partial measure $\mu$ with critical point $\cN \cap \Ord$, we replace $\cM$ by the corresponding translated premouse in which $\cN \cap \Ord$ is a cutpoint as in \cite[Remark 12.7]{St08} in order to include it in $Lp^n(\cN)$ as well.

\begin{definition} \label{def:nsuit} A countable premouse $\cN$ is
  \emph{$n$-suitable}\comm{Usually my notation for this is
    $(n+1)$-suitable, so be careful} iff there is an ordinal $\delta$
  such that
 \begin{enumerate}
 \item $\cN \vDash \text{``}\ZFC - \text{Replacement''}$ and
   $ \cN \cap \Ord = \sup_{i<\omega} (\delta^{+i})^\cN$,
 \item $\cN \vDash \text{``} \delta \text{ is a Woodin cardinal''}$,
 \item $\cN$ is $(n+1)$-small,
 \item for every cutpoint $\gamma < \delta$ of $\cN$, $\gamma$ is not
   Woodin in $Lp^n(\cN|\gamma)$,
 \item $\cN | (\delta^{+(i+1)})^\cN = Lp^n(\cN | (\delta^{+i})^\cN)$ for all $i<\omega$, and
 \item for all $\eta < \delta$, $\cN \vDash \text{``} \cN | \delta \text{ is } (\omega,\eta,\eta)\text{-iterable''}$. 
 \end{enumerate}
\end{definition}

If $\cN$ is an $n$-suitable premouse we denote the ordinal
$\delta$ from Definition \ref{def:nsuit} by
$\delta^\cN$. Moreover, we write $\hat\cN = M_n(\cN|\delta^{\cN})$ for any $n$-suitable premouse $\cN$. Then $\cN = \hat\cN | ((\delta^\cN)^{+\omega})^{\hat\cN}$ for every $n$-suitable premouse $\cN$ by well-known properties of the lower part model $Lp^n$. We now give some definitions indicating how $n$-suitable premice can be iterated. 

\begin{definition}
  Let $\cN$ be an arbitrary premouse and let $\cT$ be an iteration tree
  on $\cN$ of limit length.
  \begin{enumerate}[(1)]
  \item We say a premouse $\cQ = \cQ(\cT)$ is a \emph{$\cQ$-structure for
      $\cT$} iff $\cM(\cT) \unlhd \cQ$, $\cQ$ is sound above $\delta(\cT)$, $\delta(\cT)$ is a cutpoint of
    $\cQ$, $\cQ$ is $(\omega, \omega_1, \omega_1)$-iterable above $\delta(\cT)$, and if $\cQ \neq \cM(\cT)$ 
    \[ \cQ \vDash \text{``} \delta(\cT) \text{ is a Woodin
      cardinal''}, \] and 
    \begin{enumerate}[$(i)$]
    \item over $\cQ$ there exists an $r\Sigma_n$-definable set
      $A \subset \delta(\cT)$ such that there is no
      $\kappa < \delta(\cT)$ such that $\kappa$ is strong up to
      $\delta(\cT)$ with respect to $A$ as being witnessed by extenders
      on the sequence of $\cQ$ for some $n < \omega$, or
  \item  $\rho_n(\cQ) < \delta(\cT)$ for some $n<\omega$.
    \end{enumerate}
  \item Let $b$ be a cofinal well-founded branch through $\cT$. Then we
    say a premouse $\cQ = \cQ(b, \cT)$ is a \emph{$\cQ$-structure for $b$
      in $\cT$} iff 
    $\cQ = \cM_b^\cT | \gamma$, where $\gamma \leq \cM_b^\cT \cap \Ord$ is
    the least ordinal such that either
    \[ \gamma < \cM_b^\cT \cap \Ord \text{ and } \cM_b^\cT | (\gamma+1)
    \vDash \text{``}\delta(\cT) \text{ is not Woodin'',} \] or
    \[ \gamma = \cM_b^\cT \cap \Ord \text{ and }
    \rho_n(\cM_b^\cT) < \delta(\cT) \]
    for some $n<\omega$ or over $\cM_b^\cT$ there exists an
    $r\Sigma_n$-definable set $A \subset \delta(\cT)$ such that there
    is no $\kappa < \delta(\cT)$ such that $\kappa$ is strong up to
    $\delta(\cT)$ with respect to $A$ as being witnessed by extenders
    on the sequence of $\cM_b^\cT$ for some $n < \omega$.\\
    If no such ordinal $\gamma \leq \cM_b^\cT \cap \Ord$ exists, we let
    $\cQ(b,\cT)$ be undefined.
    \end{enumerate}
  \end{definition}

  \begin{remark}
  The premouse $M_{n+1}|(\delta_0^{+\omega})^{M_{n+1}}$ is
  $n$-suitable, where $\delta_0$ is the least Woodin cardinal in
  $M_{n+1}$. We denote this premouse by $M_{n+1}^-$ and write
  $\Sigma_{M_{n+1}^-}$ for its canonical iteration strategy induced by
  the usual iteration strategy $\Sigma_{M_{n+1}}$ for $M_{n+1}$ for countable stacks of normal trees without drops on the main branches.
\end{remark}

Our goal is to approximate the iteration strategy
$\Sigma_{M_{n+1}^-}$ inside $\HODMn$. Analogous to \cite[Definition 5.32]{SchlTr} we define the following requirement, which will be used in Definition \ref{def:sti} to make
the proof of Lemmas \ref{lem:pseudocomp} and \ref{lem:pseudogenit} work.

\begin{definition}
  Let $\cN$ be an $n$-suitable premouse and let $\cT$ be a normal
  iteration tree on $\cN$ of length $< \omega_1^V$. Then we say that
  $\cT$ is \emph{suitability strict} iff for all $\alpha < \lh(\cT)$,
  \begin{enumerate}[$(i)$]
  \item if $[0, \alpha]_T$ does not drop then $\cM_\alpha^\cT$ is
    $n$-suitable, and 
  \item if $[0, \alpha]_T$ drops then no $\cR \unlhd \cM_\alpha^\cT$
    is $n$-suitable. 
  \end{enumerate}
\end{definition}

\begin{definition}
  \label{def:short} Let $\cN$ be an $n$-suitable premouse and let
  $\cT$ be a normal iteration tree on $\cN$ of length $< \omega_1^V$.
  \begin{enumerate}
  \item $\cT$ is \emph{correctly guided} iff for every limit ordinal
    $\lambda < \lh(\cT)$, if $b$ is the branch choosen for
    $\cT \upharpoonright \lambda$ in $\cT$, then
    $\cQ(b, \cT \upharpoonright \lambda)$ exists and
    $\cQ(b, \cT \upharpoonright \lambda) \unlhd M_n(\cM(\cT
    \upharpoonright \lambda))$.
  \item $\cT$ is \emph{short} iff $\cT$ is correctly guided and in
    case $\cT$ has limit length $\cQ(\cT)$ exists and $\cQ(\cT) \unlhd
    M_n(\cM(\cT))$.
  \item $\cT$ is \emph{maximal} iff $\cT$ is correctly guided and not
    short.
  \end{enumerate}
\end{definition}

\begin{definition}\label{def:sti}
  Let $\cN$ be an $n$-suitable premouse. We say $\cN$ is \emph{short
    tree iterable} \index{short tree iterable} iff whenever $\cT$ is a
  short tree on $\cN$,
  \begin{enumerate}[$(i)$]
  \item $\cT$ is suitability strict,
  \item if $\cT$ has a last model, then every putative\footnote{An
      iteration tree $\cU$ is a \emph{putative iteration tree} if
      $\cU$ satisfies all properties of an iteration tree, but in case
      $\cU$ has a last model we allow this last model to be
      ill-founded.} iteration tree $\cU$ extending $\cT$ such that
    $\lh(\cU) = \lh(\cT) +1$ has a well-founded last model, and
  \item if $\cT$ has limit length, then there exists a cofinal
    well-founded branch $b$ through $\cT$ such that
    $\cQ(b,\cT) = \cQ(\cT)$.
  \end{enumerate}
\end{definition}

This can be generalized to stacks of correctly guided normal trees.

\begin{definition}\label{def:fincorrectlyguidedstack}
  Let $\cN$ be an $n$-suitable premouse and $m < \omega$. Then we say
  $(\cT_i, \cN_i \st i \leq m)$ is a \emph{correctly guided finite
    stack} on $\cN$ iff
  \begin{enumerate}[$(i)$]
  \item $\cN_0 = \cN$,
  \item $\cN_i$ is $n$-suitable and $\cT_i$ is a correctly guided
    normal iteration tree on $\cN_i$ which acts below $\delta^{\cN_i}$
    for all $i \leq m$,
  \item for every $i<m$ either $\cT_i$ has a last model which is equal
    to $\cN_{i+1}$ and the iteration embedding
    $i^{\cT_i}: \cN_i \rightarrow \cN_{i+1}$ exists or $\cT_i$ is
    maximal and
    $\cN_{i+1} = M_n(\cM(\cT_i)) |
    (\delta(\cT_i)^{+\omega})^{M_n(\cM(\cT_i))}$.
  \end{enumerate}
  Moreover, we say that $\cM$ is the last model of
  $(\cT_i, \cN_i \st i \leq m)$ iff either
  \begin{enumerate}[$(i)$]
  \item $\cT_m$ has a last model which is equal to $\cM$ and the
    iteration embedding $i^{\cT_m}: \cN_m \rightarrow \cM$ exists,
  \item $\cT_m$ is of limit length and short and there is a
    non-dropping cofinal well-founded branch $b$ through $\cT_m$ such
    that $\cQ(b,\cT)$ exists, $\cT_m{}^\smallfrown b$ is correctly
    guided, and $\cM = \cM_b^{\cT}$, or
  \item $\cT_m$ is maximal and
    $\cM = M_n(\cM(\cT_m)) |
    (\delta(\cT_m)^{+\omega})^{M_n(\cM(\cT_m))}$.
  \end{enumerate}
  Finally, we say that $\cM$ is a \emph{correct iterate} of $\cN$ iff
  there is a correctly guided finite stack on $\cN$ with last model
  $\cM$. In case there is a correctly guided finite stack on $\cN$
  with last model $\cM$ of length $1$, i.e., such that $m=0$, we say
  that $\cM$ is a \emph{pseudo-normal iterate} (or just
  \emph{pseudo-iterate}) of $\cN$.
\end{definition}

Analogous to Theorem $3.14$ in \cite{StW16} we also have a version of
the comparison lemma for short tree iterable premice and pseudo-normal
iterates.

\begin{lemma}[Pseudo-comparison lemma]\label{lem:pseudocomp}
  Let $\cN$ and $\cM$ be $n$-suitable premice which are short tree
  iterable. Then there is a common pseudo-normal iterate
  $\cR \in M_n(y)$ such that
  $\delta^\cR \leq \omega_1^{M_n(y)}$, where
  $y$ is a real coding $\cN$ and $\cM$.
\end{lemma}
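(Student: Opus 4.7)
My plan is to run the standard least-extender-disagreement comparison of $\cN$ and $\cM$ inside $M_n(y)$, producing normal iteration trees $\cT$ on $\cN$ and $\cU$ on $\cM$. Both trees act below $\delta^\cN$ and $\delta^\cM$ respectively, since by Definition~\ref{def:nsuit}(5) the parts of $\cN$ and $\cM$ above their Woodin cardinals are lower-part closures which automatically agree once the parts below do. At each limit $\lambda$ along either side, the $\cQ$-structure, when defined, is an initial segment of $M_n(\cM(\cT\upharpoonright\lambda))$ respectively $M_n(\cM(\cU\upharpoonright\lambda))$, and is computable from $\cM(\cT\upharpoonright\lambda)$ inside $M_n(y)$ via the $\cP$-construction. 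Short tree iterability (clause~(iii) of Definition~\ref{def:sti}) then furnishes the unique cofinal well-founded branch realizing the $\cQ$-structure, so these branches too are picked out definably inside $M_n(y)$. In this way the entire construction of $\cT$ and $\cU$ can be carried out inside $M_n(y)$.

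I would then split into cases according to whether a maximal stage ever arises. If both trees are short at every limit, the usual comparison argument forces termination: at a successor stage with no further disagreement one model is a pseudo-iterate of the other and we take it as $\cR$, while infinite length is ruled out by the standard Kunen-style reflection argument, internalized using the iterability of $\cN|\delta^\cN$ and $\cM|\delta^\cM$ guaranteed by Definition~\ref{def:nsuit}(6). Termination therefore happens at some length countable in $M_n(y)$, yielding $\cR\in M_n(y)$ with $\delta^\cR<\omega_1^{M_n(y)}$. If instead some initial segment $\cT\upharpoonright\lambda$ becomes maximal, then least-extender-disagreement forces $\cM(\cT\upharpoonright\lambda)=\cM(\cU\upharpoonright\lambda)$, so $\cU\upharpoonright\lambda$ is maximal too, and I set
\[ \cR \;:=\; M_n(\cM(\cT\upharpoonright\lambda))\,|\,(\delta(\cT\upharpoonright\lambda)^{+\omega})^{M_n(\cM(\cT\upharpoonright\lambda))}, \]
which by Definition~\ref{def:fincorrectlyguidedstack}(iii) is a pseudo-normal iterate of both $\cN$ and $\cM$; again $\cR\in M_n(y)$ with $\delta^\cR<\omega_1^{M_n(y)}$.

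The main obstacle I anticipate is justifying that the whole process is captured inside $M_n(y)$, with the Woodin cardinal of $\cR$ staying below $\omega_1^{M_n(y)}$. This rests on two points: absoluteness of $\cQ$-structure computation between $V$ and $M_n(y)$, which follows from the $\cP$-construction together with the fact that the relevant $\cQ$-structures sit inside $M_n(\cM(\cT\upharpoonright\lambda))\subseteq M_n(y)$; and an internal version of the standard termination-of-comparison argument, which uses the internal iterability from Definition~\ref{def:nsuit}(6) together with short tree iterability to rule out comparison trees of length $\omega_1^{M_n(y)}$. Once these points are verified, the lemma follows by the standard comparison recipe adapted to pseudo-iterates.
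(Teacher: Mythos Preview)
Your approach is correct and matches what the paper intends: the paper omits the proof entirely, referring to Theorem~3.14 of \cite{StW16}, and your outline is precisely that argument adapted to the $n$-suitable setting. The key observations---that $\cM(\cT\upharpoonright\lambda)=\cM(\cU\upharpoonright\lambda)$ in a least-disagreement coiteration (so maximality occurs simultaneously on both sides), that $\cQ$-structures and hence the unique branches through short trees are computable inside $M_n(y)$, and that in the maximal case $\cR$ is obtained as the $Lp^n$-closure of the common part---are all the right ones.

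One small correction: your appeal to Definition~\ref{def:nsuit}(6) for the termination bound is misplaced. Clause~(6) records that $\cN$ internally believes $\cN|\delta^\cN$ is $(\omega,\eta,\eta)$-iterable for each $\eta<\delta^\cN$; this is not what bounds the length of the coiteration. What actually gives $\delta^\cR\leq\omega_1^{M_n(y)}$ is that the entire coiteration together with its $\cQ$-guided branches is definable over $M_n(y)$, so that either the standard hull argument can be run inside $M_n(y)$ (a $\ZFC$ model) to rule out both sides remaining short through stage $\omega_1^{M_n(y)}$, or one observes directly that stage $\omega_1^{M_n(y)}$ is maximal. Either way the relevant inputs are short tree iterability and the absoluteness of the $Lp^n$/$\cQ$-structure computation to $M_n(y)$, not clause~(6). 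This is a misattribution rather than a genuine gap; the strategy you describe goes through.
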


The proof of Lemma \ref{lem:pseudocomp} is similar to the proof of
Theorem $3.14$ in \cite{StW16}, so we omit it. Similarly, we have an
analogue to the pseudo-genericity iteration (see Theorem $3.16$ in
\cite{StW16}).

\begin{lemma}[Pseudo-genericity iterations]\label{lem:pseudogenit}
  Let $\cN$ be an $n$-suitable premouse which is short tree iterable
  and let $z$ be a real. Then there is a pseudo-normal iterate $\cR$
  of $\cN$ in $M_n(y,z)$ such that $z$ is $\mathbb{B}^\cR$-generic
  over $\cR$ and $\delta^\cR \leq \omega_1^{M_n(y,z)}$, where
  $y$ is a real coding $\cN$ and $\mathbb{B}^\cR$ denotes Woodin's
  extender algebra inside $\cR$.
\end{lemma}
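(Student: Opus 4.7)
The plan is to adapt Woodin's genericity iteration (cf.~Theorem~3.16 in \cite{StW16}) to the pseudo-iterate setting. Working inside $M_n(y,z)$, I build a correctly guided normal iteration tree $\cT$ on $\cN$ acting below $\delta^\cN$ whose last model --- either a genuine last model (in the short case) or the pseudo-normal endpoint $M_n(\cM(\cT))|(\delta(\cT)^{+\omega})^{M_n(\cM(\cT))}$ (in the maximal case) --- will be the desired $\cR$. Two facts enable the construction to be carried out internally to $M_n(y,z)$: short tree iterability of $\cN$ supplies cofinal branches at short limit stages, and the relevant $\cQ$-structures are initial segments of $M_n(\cM(\cT\!\upharpoonright\!\lambda))$, which is definable inside $M_n(y,z)$ from the tree.

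At a successor stage $\alpha+1$, given the current $n$-suitable iterate $\cN_\alpha$, test whether $z$ is $\mathbb{B}^{\cN_\alpha}$-generic over $\cN_\alpha$; if so, set $\cR=\cN_\alpha$ and stop. Otherwise, find the axiom of $\mathbb{B}^{\cN_\alpha}$ of least index $\eta_\alpha < \delta^{\cN_\alpha}$ that $z$ falsifies, and extend $\cT$ using an extender $E_\alpha$ on the $\cN_\alpha$-sequence of length $\eta_\alpha$ witnessing the failure. The standard extender-algebra calculation shows $z$ satisfies every axiom of index $<\eta_\alpha$ in all subsequent iterates; in particular $(\eta_\alpha)$ is strictly increasing and $\cT$ remains normal. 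At a limit stage $\lambda$, the tree $\cT\!\upharpoonright\!\lambda$ is correctly guided by construction; if it is short, short tree iterability selects the unique branch $b_\lambda$ with $\cQ(b_\lambda,\cT\!\upharpoonright\!\lambda)=\cQ(\cT\!\upharpoonright\!\lambda)$ and the tree is continued along $b_\lambda$; if it is maximal, the construction halts with $\cR = M_n(\cM(\cT\!\upharpoonright\!\lambda))|(\delta(\cT\!\upharpoonright\!\lambda)^{+\omega})^{M_n(\cM(\cT\!\upharpoonright\!\lambda))}$. In the maximal case, the axioms of $\mathbb{B}^\cR$ below any $\eta < \delta^\cR$ are determined already by $\cM(\cT\!\upharpoonright\!\lambda)|\eta$, and by construction each such axiom has been satisfied at some earlier stage, so $z$ is $\mathbb{B}^\cR$-generic.

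Termination in fewer than $\omega_1^{M_n(y,z)}$ steps follows by the standard argument: the indices $\eta_\alpha$ are strictly increasing, and running the process through all countable ordinals of $M_n(y,z)$ without stopping would present the limit model with a would-be Woodin cardinal of cofinality $\omega_1^{M_n(y,z)}$, contradicting $n$-suitability along the tree. Hence $\delta^\cR \leq \omega_1^{M_n(y,z)}$ and $\cR \in M_n(y,z)$.

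The main obstacle is the internal execution inside $M_n(y,z)$: short tree iterability is a statement about $V$, but the construction has to identify the correct branches at short limit stages from within $M_n(y,z)$. This uses that $\cQ(b_\lambda,\cT\!\upharpoonright\!\lambda) \unlhd M_n(\cM(\cT\!\upharpoonright\!\lambda))$ and that $M_n(\cM(\cT\!\upharpoonright\!\lambda))$ can be constructed inside $M_n(y,z)$ (via the $M_n$-operator applied to the countable object $\cM(\cT\!\upharpoonright\!\lambda) \in M_n(y,z)$); the branch $b_\lambda$ is then the unique cofinal well-founded branch whose direct limit hits this $\cQ$-structure, a property that $M_n(y,z)$ can recognize.
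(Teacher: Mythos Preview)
Your proposal is correct and follows exactly the approach the paper intends: the paper omits the proof entirely, pointing instead to Theorem~3.16 of \cite{StW16}, and your sketch is precisely the adaptation of Woodin's genericity iteration to the pseudo-iterate framework carried out there. One small point worth making explicit in your final paragraph: it is not enough that $M_n(y,z)$ can \emph{recognize} the correct branch at a short limit stage --- you also need that the branch \emph{exists} in $M_n(y,z)$, which follows since the tree and the $\cQ$-structure are countable there and the existence of a cofinal branch absorbing the given $\cQ$-structure is a $\Sigma^1_1$ statement in a real coding them, hence absolute downward to $M_n(y,z)$.
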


For the definition of the direct limit system converging to $\HOD$ we
need the notion of $s$-iterability. To define this, we first introduce
some notation.
For an $n$-suitable premouse $\cN$, a finite sequence of ordinals $s$,
and some $k < \omega$ let
\begin{align*}
  T_{s,k}^\cN = \{ (t, \ulcorner \phi \urcorner) \in 
  [((\delta^\cN)^{+k})^\cN]^{<\omega} & \times \omega \st \phi \text{ is
                                        a } \Sigma_1\text{-formula and
                                        }\\ & M_n(\cN | \delta^\cN)
                                              \vDash \phi[t,s] \},  
\end{align*}
where $\ulcorner \phi \urcorner$ denotes the G\"odel number of
$\phi$. Let $Hull_1^\cN$ denote an uncollapsed $\Sigma_1$ hull in
$\cN$. Then we let
\[ \gamma_s^\cN = \sup(Hull_1^\cN(\{ T_{s,k}^\cN \st k < \omega \}) \cap
  \delta^\cN)\] and
\[ H_s^\cN = Hull_1^\cN( \gamma_s^\cN \cup \{ T_{s,k}^\cN \st k <
  \omega \}). \] Then $\gamma_s^\cN = H_s^\cN \cap \delta^\cN$. For
$s_m = (u_1, \dots, u_{m})$ the sequence of the first $m$ uniform
indiscernibles, we write $\gamma_m^\cN = \gamma_{s_m}^\cN$ and
$H_m^\cN = H_{s_m}^\cN$. Then we have that
$\sup_{m \in \omega} \gamma_m^\cN = \delta^\cN$ (see Lemma 5.3 in
\cite{Sa13}).

\begin{definition}\label{def:siterable}
  Let $\cN$ be an $n$-suitable premouse and $s$ a finite sequence of
  ordinals. Then $\cN$ is \emph{$s$-iterable} iff every correct iterate of $\cN$ is short tree
  iterable and for every correctly guided finite stack
  $(\cT_i, \cN_i \st i \leq m)$ on $\cN$ with last model $\cM$ there
  is a sequence of non-dropping branches $(b_i \st i \leq m)$ and a sequence of
  embeddings $(\pi_i \st i \leq m)$ such that
  \begin{enumerate}[$(i)$]
  \item if $\cT_i$ has successor length $\alpha +1$, then
    $b_i = [0,\alpha]_{T_i}$ and $\pi_i = i_{0,\alpha}^{\cT_i}$ is the corresponding
    iteration embedding for $i \leq m$,
  \item if $\cT_m$ is short, then $b_m$ is the unique cofinal
    well-founded branch through $\cT_m$ such that $\cQ(b_m,\cT_m)$
    exists and $\cT_m{}^\smallfrown b_m$ is correctly guided and
    $\pi_m = i^{\cT_m}_{b_m}$ is the corresponding iteration
    embedding,
  \item if $\cT_i$ is maximal, then $b_i$ is a cofinal well-founded
    branch through $\cT_i$ such that $\cM_{b_i}^{\cT_i} = \cN_{i+1}$
    if $i<m$ or $\cM_{b_i}^{\cT_i} = \cM$ if $i =m$, and
    $\pi_i = i^{\cT_i}_{b_i}$ is the corresponding iteration embedding
    for $i \leq m$, and
  \item if we let
    $\pi = \pi_m \circ \pi_{m-1} \circ \dots \circ \pi_0$ then for
    every $k< \omega$, \[\pi(T_{s,k}^\cN) = T_{s,k}^\cM.\]
  \end{enumerate}
  In this case we say that the sequence $\vec{b} = (b_i \st i \leq m)$
  \emph{witnesses $s$-iterability} for
  $\vec{\cT} = (\cT_i, \cN_i \st i \leq m)$ or that $\vec{b}$ is an
  \emph{$s$-iterability branch} for $\vec{\cT}$ and we write
  $\pi_{\vec{\cT}, \vec{b}} = \pi$.
\end{definition}

Now for every two $s$-iterability branches for $\vec{\cT}$ on $\cN$
their corresponding iteration embeddings agree on $H_s^\cN$.

\begin{lemma}[Uniqueness of $s$-iterability embeddings, Lemma $5.5$ in
  \cite{Sa13}]\label{lem:uniqsitemb}
  Let $\cN$ be an $n$-suitable premouse, $s$ a finite sequence of
  ordinals, and $\vec{\cT}$ a correctly guided finite stack on
  $\cN$. Moreover let $\vec{b}$ and $\vec{c}$ be $s$-iterability
  branches for $\vec{\cT}$. Then
  \[ \pi_{\vec{\cT}, \vec{b}} \upharpoonright H_s^\cN =
    \pi_{\vec{\cT}, \vec{c}} \upharpoonright H_s^\cN. \]
\end{lemma}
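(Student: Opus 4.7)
The plan is to prove the lemma by induction on the length $m$ of the stack $\vec\cT$, with the substantive content lying in the base case. In the inductive step I would decompose each embedding as $\pi_{\vec\cT,\vec b} = \pi_{\cT_m, b_m} \circ \pi_{\vec\cT \upharpoonright m, \vec b \upharpoonright m}$ (and similarly for $\vec c$), apply the inductive hypothesis on $H_s^\cN$ to the prefix, and use that the prefix maps take $H_s^\cN$ into $H_s^{\cN_m}$ (by preservation of the $T$-tables from clause (iv) together with elementarity), so that the base case applied to $\cT_m$ on $\cN_m$ propagates equality to the composition.

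For the base case of a single tree $\cT$ on $\cN$, I would distinguish three subcases. If $\cT$ has successor length, both $b$ and $c$ are the unique cofinal last branch, so $\pi_b = \pi_c$. If $\cT$ is short, the branch is uniquely determined by the requirement that $\cQ(b,\cT) = \cQ(\cT)$ and that $\cT^\smallfrown b$ be correctly guided, so again $b = c$. The interesting case is when $\cT$ is maximal: here $b$ and $c$ may genuinely differ, but both lead to the same specified last model $\cM$, and by clause (iv) of Definition \ref{def:siterable} both satisfy $\pi_b(T_{s,k}^\cN) = T_{s,k}^\cM = \pi_c(T_{s,k}^\cN)$ for every $k<\omega$.

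In the maximal case the key step is to establish $\pi_b \upharpoonright \gamma_s^\cN = \pi_c \upharpoonright \gamma_s^\cN$; once this is in hand, any $y \in H_s^\cN$ can be written $y = \tau^\cN(\vec\alpha, \vec T)$ for a $\Sigma_1$-Skolem term $\tau$, ordinals $\vec\alpha < \gamma_s^\cN$, and finitely many $T_{s,k}^\cN$'s, and $\Sigma_1$-elementarity together with preservation of the tables yields $\pi_b(y) = \tau^\cM(\pi_b(\vec\alpha), \vec{T^\cM}) = \tau^\cM(\pi_c(\vec\alpha), \vec{T^\cM}) = \pi_c(y)$. For agreement on $\gamma_s^\cN$ itself, I would first observe that on $X^\cN := Hull_1^\cN(\{T_{s,k}^\cN \st k<\omega\})$ both $\pi_b$ and $\pi_c$ restrict to the unique $\Sigma_1$-elementary map determined by $T_{s,k}^\cN \mapsto T_{s,k}^\cM$, hence agree on $X^\cN \cap \delta^\cN$, a cofinal subset of $\gamma_s^\cN$. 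To upgrade to every $\alpha < \gamma_s^\cN$, I would copy the maximal tree to obtain iteration embeddings $\hat\pi_b, \hat\pi_c : \hat\cN \to \hat\cM$ between the $M_n$-extensions (which exist since $\cN = \hat\cN|((\delta^\cN)^{+\omega})^{\hat\cN}$ and the tree acts below $\delta^\cN$) and use that the $T_{s,k}^\cN$ collectively encode the full $\Sigma_1$-theory of $\hat\cN$ with parameters from $s \cup (\delta^\cN)^{+\omega}$: simultaneous preservation of every table forces $\hat\pi_b$ and $\hat\pi_c$ to agree on the $\Sigma_1^{\hat\cN}$-hull of $s$, which meets $\delta^\cN$ exactly in the ordinal $\gamma_s^\cN$.

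The main technical obstacle is precisely this final upgrade: agreement on the cofinal subset $X^\cN \cap \delta^\cN$ is immediate from $\Sigma_1$-elementarity inside $\cN$, but pushing it to \emph{all} of $\gamma_s^\cN$ requires the full strength of $s$-iterability (simultaneous preservation of every $T_{s,k}^\cN$, not merely finitely many) and the transfer to the $M_n$-extension, where the relevant $\Sigma_1$-hull cuts $\delta^\cN$ at an initial segment rather than merely a cofinal subset.
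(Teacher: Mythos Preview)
The paper does not prove this lemma; it is merely stated with a citation to Lemma~5.5 of \cite{Sa13}. So there is no ``paper's own proof'' to compare against, and your proposal has to stand on its own.

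The overall shape is right: agreement on $X^\cN = Hull_1^\cN(\{T_{s,k}^\cN : k<\omega\})$ is immediate from table-preservation plus elementarity, and the substantive work lies in extending this to all of $\gamma_s^\cN$. But there are two gaps.

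First, the induction on stack length does not go through as written. To apply the inductive hypothesis to the prefix $\vec\cT\upharpoonright m$ with branches $\vec b\upharpoonright m$ and $\vec c\upharpoonright m$, you would need these restrictions to be $s$-iterability branches for the prefix, i.e.\ you would need $\pi_{\vec\cT\upharpoonright m,\,\vec b\upharpoonright m}(T_{s,k}^\cN)=T_{s,k}^{\cN_m}$. Clause~(iv) of Definition~\ref{def:siterable} only guarantees that the \emph{full} composition $\pi_{\vec\cT,\vec b}$ sends $T_{s,k}^\cN$ to $T_{s,k}^\cM$; nothing forces any individual $\pi_i$ or any proper prefix composition to preserve the tables. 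Fortunately the induction is unnecessary: your ``maximal-case'' analysis only uses that $\pi_{\vec\cT,\vec b}$ and $\pi_{\vec\cT,\vec c}$ are elementary maps $\cN\to\cM$ carrying every $T_{s,k}^\cN$ to $T_{s,k}^\cM$, and this holds for the composed maps directly, with no need to decompose.

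Second, the upgrade via lifting to $\hat\cN$ is not justified. You want embeddings $\hat\pi_b,\hat\pi_c:\hat\cN\to\hat\cM$, but for an arbitrary (not $\cQ$-guided) branch $b$ through a maximal tree the lifted direct limit on $\hat\cN$ need not be well-founded, and even if it is there is no reason it equals $\hat\cM$; for a stack the lift is more delicate still. Moreover, the claimed identity $Hull_1^{\hat\cN}(s)\cap\delta^\cN=\gamma_s^\cN$ is not what the definitions give you: $\gamma_s^\cN$ is the sup of a hull computed \emph{in $\cN$} from the tables $T_{s,k}^\cN$, not a hull computed in $\hat\cN$ from $s$, and there is no reason this hull is transitive below $\delta^\cN$. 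What $\pi(T_{s,k}^\cN)=T_{s,k}^\cM$ actually gives you, directly and without any lifting, is the equivalence $\hat\cN\vDash\phi[t,s]\Longleftrightarrow\hat\cM\vDash\phi[\pi(t),s]$ for all $\Sigma_1$ formulae $\phi$ and all $t\in[(\delta^\cN)^{+k}]^{<\omega}$, with the \emph{same} $s$ on both sides. The argument in \cite{Sa13} proceeds from this observation; you should work with it directly rather than trying to produce iteration maps on the proper-class models $\hat\cN$, $\hat\cM$.
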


The uniqueness of $s$-iterability embeddings yields that for every $n$-suitable, $s$-iterable
$\cN$, every correctly guided finite stack $\vec{\cT}$ on $\cN$ and
every $s$-iterability branch $\vec{b}$ for $\vec{\cT}$, the embedding
$\pi_{\vec{\cT}, \vec{b}} \upharpoonright H_s^\cN$ is independent of
the choice of $\vec{b}$, but it might still depend on
$\vec{\cT}$. This motivates the following definition.

\begin{definition}\label{def:stronglysiterable}
  Let $\cN$ be an $n$-suitable premouse and $s$ a finite sequence of
  ordinals. Then $\cN$ is \emph{strongly $s$-iterable} iff for every correct iterate $\cR$ of $\cN$, $\cR$ is $s$-iterable and for every two correctly guided finite stacks
  $\vec{\cT}$ and $\vec{\cU}$ on $\cR$ with common last model $\cM$
  and $s$-iterability witnesses $\vec{b}$ and $\vec{c}$ for
  $\vec{\cT}$ and $\vec{\cU}$ respectively, we have that
  \[ \pi_{\vec{\cT}, \vec{b}} \upharpoonright H_s^\cR =
    \pi_{\vec{\cU}, \vec{c}} \upharpoonright H_s^\cR. \]
\end{definition}

A so-called \emph{bad sequence argument} shows the following lemma,
which yields the existence of strongly $s$-iterable premice.

\begin{lemma}[Lemma $5.9$ in \cite{Sa13}]\label{lem:stronglysiterable}
  For every finite sequence of ordinals $s$ and any short tree
  iterable $n$-suitable premouse $\cN$ there is a pseudo-normal
  iterate $\cM$ of $\cN$ such that $\cM$ is strongly $s$-iterable.
\end{lemma}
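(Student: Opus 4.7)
The plan is a \emph{bad sequence argument}. Suppose toward contradiction that no pseudo-normal iterate of $\cN$ is strongly $s$-iterable. I will recursively construct a sequence of correct iterates of $\cN$ together with an infinite strictly descending sequence of ordinals, contradicting the wellfoundedness of $\Ord$.

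Setting $\cN_0 = \cN$, I will build correct iterates $\cN_0, \cN_1, \cN_2, \dots$ together with a \emph{main branch} of $s$-iterability embeddings $\iota_{i,j}\colon \cN_i \to \cN_j$ for $i \le j$. At stage $i$, since $\cN_i$ is a pseudo-normal iterate of $\cN$, by hypothesis $\cN_i$ is not strongly $s$-iterable. Short tree iterability passes to correct iterates --- a prerequisite for strong $s$-iterability in Definition~\ref{def:stronglysiterable} --- so the failure must be in the uniqueness-across-stacks clause, and by Lemma~\ref{lem:uniqsitemb} it cannot arise from two witnesses for a single stack. Hence there exist a correct iterate $\cR_i$ of $\cN_i$, two distinct correctly guided finite stacks $\vec{\cT}^i, \vec{\cU}^i$ on $\cR_i$ with a common last model which I name $\cN_{i+1}$, and $s$-iterability witnesses $\vec{b}^i, \vec{c}^i$, such that the two resulting embeddings
\[ \pi_i^0 = \pi_{\vec{\cT}^i, \vec{b}^i}, \qquad \pi_i^1 = \pi_{\vec{\cU}^i, \vec{c}^i} \colon \cR_i \to \cN_{i+1} \]
disagree on $H_s^{\cR_i}$. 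Clause $(iv)$ of Definition~\ref{def:siterable} forces $\pi_i^0(T_{s,k}^{\cR_i}) = \pi_i^1(T_{s,k}^{\cR_i}) = T_{s,k}^{\cN_{i+1}}$ for every $k < \omega$, so the disagreement must live on some ordinal $\alpha_i < \gamma_s^{\cR_i}$. I pick $\alpha_i$ least and, after relabelling, assume $\pi_i^0(\alpha_i) < \pi_i^1(\alpha_i)$, and then extend the main branch by setting $\iota_{i,i+1} = \pi_i^1 \circ \iota_i'$, where $\iota_i' \colon \cN_i \to \cR_i$ is an $s$-iterability embedding.

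To turn the local disagreements into global descent, at each stage $i$ I consider the two compositions
\[ \tau_i^0 = \pi_i^0 \circ \iota_i' \circ \iota_{0,i}, \qquad \tau_i^1 = \iota_{0,i+1} = \pi_i^1 \circ \iota_i' \circ \iota_{0,i} \colon \cN_0 \to \cN_{i+1}, \]
both of which are $s$-iterability embeddings arising from correctly guided finite stacks on $\cN_0$ with common last model $\cN_{i+1}$. By Lemma~\ref{lem:uniqsitemb}, $\tau_i^1 \upharpoonright H_s^{\cN_0}$ is independent of the choice of $s$-iterability witness within its stack, and similarly for $\tau_i^0$. The strict inequality $\pi_i^0(\alpha_i) < \pi_i^1(\alpha_i)$ together with the minimality of $\alpha_i$ should then produce an ordinal $\xi_i \in H_s^{\cN_0} \cap \gamma_s^{\cN_0}$ on which $\tau_i^0$ and $\tau_i^1$ disagree with $\tau_i^0(\xi_i) < \tau_i^1(\xi_i)$. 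Pulling this strict inequality back through the elementary main-branch map $\iota_{0,i+1} = \tau_i^1$ at the next stage yields $\xi_{i+1} < \xi_i$, and hence the infinite descent.

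The main obstacle is ensuring that the minimal disagreement $\alpha_i \in \cR_i$ really does pull back to an ordinal in $H_s^{\cN_0}$ at every stage; this is not automatic, because $\alpha_i$ a priori lies outside the range of $\iota_i' \circ \iota_{0,i}$. The remedy will be to refine $\cR_i$ by a further pseudo-normal iteration, using Lemma~\ref{lem:pseudogenit}, until enough of $H_s$ is absorbed into the image of the main branch; equivalently, one may argue by a pigeonhole on the countably many possible failure patterns across stages. Once this bookkeeping is in place, the remaining verifications --- that the compositions $\tau_i^0, \tau_i^1$ are indeed $s$-iterability embeddings for correctly guided stacks, and that $\xi_i$ strictly descends --- are routine manipulations with hulls and uniqueness of $s$-iterability embeddings.
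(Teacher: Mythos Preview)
The paper does not prove this lemma here; it only remarks that the proof is a bad sequence argument and refers to \cite{Sa13}. Your overall architecture --- assume no pseudo-normal iterate is strongly $s$-iterable, iterate the failures into an infinite sequence, and extract an infinite descent --- is exactly that framework. Two substantive points, however, are not handled.

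First, a minor gap: you claim the failure at each stage must lie in the uniqueness-across-stacks clause because short tree iterability propagates to correct iterates. But Definition~\ref{def:stronglysiterable} requires that every correct iterate be $s$-iterable, and $s$-iterability (Definition~\ref{def:siterable}) demands the \emph{existence} of an $s$-iterability branch for every correctly guided stack, not merely short tree iterability. The failure at stage $i$ could be that some stack on $\cR_i$ has no $s$-iterability witness at all, in which case your $\pi_i^0,\pi_i^1$ would not exist. In \cite{Sa13} the existence half is established by a separate bad sequence argument (Lemma~5.8 there), and Lemma~5.9 then treats uniqueness.

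Second, and this is the real gap: your descent does not go through. From $\tau_i^0(\xi_i) < \tau_i^1(\xi_i)$ in $\cN_{i+1}$ you want to deduce $\xi_{i+1} < \xi_i$ in $\cN_0$, but nothing connects the location $\alpha_{i+1}$ of the disagreement at stage $i+1$ to the ordinal $\pi_i^0(\alpha_i)$; there is no reason for $\alpha_{i+1} \le \pi_i^0(\alpha_i)$, and if it is larger the pullback $\xi_{i+1}$ could exceed $\xi_i$. The fixes you propose (further genericity iteration, pigeonhole) do not address this. The standard bad sequence arguments in this setting do not pull back to $\cN_0$; they push \emph{forward}, and crucially they exploit the genuine $(\omega,\omega_1,\omega_1)$-iterability of $M_{n+1}^-$ via $\Sigma_{M_{n+1}^-}$. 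After pseudo-comparing with $M_{n+1}^-$ one arranges that each $\cN_i$ is a $\Sigma_{M_{n+1}^-}$-iterate, so the direct limit along the true iteration maps is well-founded; the descending sequence of ordinals is then produced in that well-founded limit by comparing the $s$-iterability embeddings against the true maps. Without invoking $\Sigma_{M_{n+1}^-}$ (or something playing the same role), your main-branch direct limit has no reason to be well-founded, and there is no home for the descent.
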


If $\cN$ is strongly $s$-iterable and $\vec{\cT}$ is a correctly
guided finite stack on $\cN$ with last model $\cM$, let
$\pi_{\cN,\cM,s} : H_s^\cN \rightarrow H_s^\cM$ denote the embedding
given by any $s$-iterability branch $\vec{b}$ for $\vec{\cT}$. As
$\cN$ is strongly $s$-iterable, the embedding $\pi_{\cN,\cM,s}$ does
not depend on the choice of $\vec{\cT}$ and $\vec{b}$.

Recall that we write
$M_{n+1}^- = M_{n+1} | (\delta_0^{+\omega})^{M_{n+1}}$, where
$\delta_0$ is the least Woodin cardinal in $M_{n+1}$, and
$\Sigma_{M_{n+1}^-}$ for the canonical iteration strategy for
$M_{n+1}^-$ induced by $\Sigma_{M_{n+1}}$. Moreover, recall that for $m<\omega$, we write $s_m$ for the set of the first $m$ uniform indiscernibles. Then $M_{n+1}^-$ is
$n$-suitable and strongly $s_m$-iterable for every $m$. Moreover, if
$\vec{\cT}$ is a correctly guided finite stack on $M_{n+1}^-$ with
last model $\cM$, then $\pi_{M_{n+1}^-, \cM,s_m}$ agrees with the
iteration embedding according to $\Sigma_{M_{n+1}^-}$ on $H_{s_m}^{M_{n+1}^-}$. The first direct limit
system we define will consist of iterates of $M_{n+1}^-$.

\begin{definition}
  Let
  \begin{align*}
     {\tilde\cF}^+ = \{ \cN \st \cN \text{ is} & \text{ a non-dropping iterate of } M_{n+1}^- \text{ via } \Sigma_{M_{n+1}^-} \\ & \text{ by a finite stack of countable trees}\}
  \end{align*}
 and for $\cN, \cM \in {\tilde\cF}^+$ let
  $\cN \leq^+ \cM$ iff $\cM$ is an iterate of $\cN$ via the tail
  strategy $\Sigma_\cN$ as witnessed by some finite stack of countable iteration
  trees. Then we let $\tilextdlmp$ be the direct limit of
  $({\tilde\cF}^+, \leq^+)$ under the iteration maps. \comm{Moreover for
  $\cN \in {\tilde}\cF^+$ let $i_{\cN,\infty} : \cN \rightarrow \tilextdlmp$
  denote the corresponding direct limit embedding.}
\end{definition}

\begin{remark}
  The prewellordering $\leq^+$ on ${\tilde\cF}^+$ is directed and the direct
  limit $\tilextdlmp$ is well-founded as the limit system
  $({\tilde\cF}^+, \leq^+)$ only consists of iterates of $M_{n+1}^-$ via the
  canonical iteration strategy $\Sigma_{M_{n+1}^-}$.
\end{remark}

Since ${\tilde\cF}^+$ is not definable enough for our purposes, we now
introduce another direct limit system which has the same direct limit
$\tilextdlmp$.

\begin{definition}
  Let
  \[ \tilde\cI = \{ (\cN, s) \st \cN \text{ is } n\text{-suitable}, s \in
    [\Ord]^{<\omega}, \text{ and } \cN \text{ is strongly }
    s\text{-iterable} \} \] and
  \[ \tilde\cF = \{ H_s^\cN \st (\cN,s) \in \tilde\cI \}. \] For
  $(\cN,s), (\cM,t) \in \tilde\cI$ we let $(\cN,s) \leq_{\tilde\cI} (\cM,t)$ iff
  there is a correctly guided finite stack on $\cN$ with last model
  $\cM$ and $s \subseteq t$. In this case we let
  $\pi_{(\cN,s), (\cM,t)}:H_s^{\cN} \rightarrow H_t^{\cM}$ denote
  the canonical corresponding embedding.
\end{definition}

\begin{remark}
  The prewellordering $\leq_{\tilde\cI}$ on $\tilde\cI$ is directed: Let
  $(\cN,s), (\cM,t) \in \tilde\cI$. By Lemma \ref{lem:stronglysiterable}
  there exists an $n$-suitable premouse $\cR$ which is strongly
  $(s \cup t)$-iterable. Let $\cS$ be the result of simultaneously
  comparing $\cN$, $\cM$ and $\cR$ in the sense of Lemma
  \ref{lem:pseudocomp}. Then $(\cS, s \cup t) \in \tilde\cI$,
  $(\cN, s) \leq_{\tilde\cI} (\cS, s \cup t)$, and
  $(\cM, t) \leq_{\tilde\cI} (\cS, s \cup t)$, as desired.
\end{remark}

\begin{definition}
  Let $\extdlm$ be the direct limit of $(\tilde\cF, \leq_{\tilde\cI})$ under the
  embeddings $\pi_{(\cN,s), (\cM,t)}$. For $(\cN, s) \in \tilde\cI$ let
  $\pi_{(\cN,s),\infty} : H_s^\cN \rightarrow \extdlm$ denote the
  corresponding direct limit embedding.
\end{definition}

The fact that $\extdlm$ is well-founded follows from the next lemma.

\begin{lemma}[Lemma $5.10$ in \cite{Sa13}]\label{lem:dlmequal}
  $\extdlm = \tilextdlmp$.
\end{lemma}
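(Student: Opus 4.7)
The plan is to construct a natural order-isomorphism $\sigma \colon \extdlm \to \tilextdlmp$ by exhibiting iterates of $M_{n+1}^-$ via $\Sigma_{M_{n+1}^-}$ as a cofinal subsystem of $\tilde\cI$ on which the hull embeddings coincide with the iteration embeddings. Throughout, write $j_\cR \colon \cR \to \tilextdlmp$ for the direct limit embedding of $\cR \in {\tilde\cF}^+$.

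The first task is cofinality: given $(\cM, t) \in \tilde\cI$, apply the pseudo-comparison lemma (Lemma \ref{lem:pseudocomp}) to $\cM$ and $M_{n+1}^-$ to obtain a common pseudo-iterate $\cR$. Because $\Sigma_{M_{n+1}^-}$ chooses the unique correct $\cQ$-structure branches, and because the pseudo-iterate construction for maximal trees agrees with the $\Sigma$-iterate on the $n$-suitable level, we may take $\cR$ to be a $\Sigma_{M_{n+1}^-}$-iterate of $M_{n+1}^-$, so $\cR \in {\tilde\cF}^+$. Moreover $\cR$ is strongly $s$-iterable for every finite $s$, since any correctly guided stack on $\cR$ prolongs a correctly guided stack from $M_{n+1}^-$, and the branches chosen by $\Sigma_{M_{n+1}^-}$ serve as $s$-iterability witnesses (with agreement on $H_s^\cR$ forced by Lemma \ref{lem:uniqsitemb}). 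Thus $(\cM, t) \leq_{\tilde\cI} (\cR, t)$, so the pairs $(\cR, s)$ with $\cR \in {\tilde\cF}^+$ are cofinal in $\tilde\cI$.

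The second task is agreement of embeddings: for $\cN, \cM \in {\tilde\cF}^+$ with $\cN \leq^+ \cM$ via a finite stack $\vec\cT$, the iteration embedding $i^{\vec\cT} \colon \cN \to \cM$ restricts to $\pi_{\cN, \cM, s}$ on $H_s^\cN$ for every $s$, because the branches selected by $\Sigma_{M_{n+1}^-}$ are $s$-iterability witnesses in the sense of Definition \ref{def:siterable}, and Lemma \ref{lem:uniqsitemb} forces this agreement. Combined with cofinality, the universal property of directed limits then yields a well-defined order-preserving map $\sigma \colon \extdlm \to \tilextdlmp$ characterised by $\sigma(\pi_{(\cN, s), \infty}(y)) = j_\cR(\pi_{\cN, \cR, s}(y))$, where $\cR \in {\tilde\cF}^+$ is any common pseudo-iterate of $\cN$ and $M_{n+1}^-$; independence of the choice of $\cR$ uses the directedness of ${\tilde\cF}^+$ together with elementarity of the $j_\cR$'s, and injectivity is immediate since each $j_\cR$ is elementary.

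For surjectivity, write $w \in \tilextdlmp$ as $j_\cN(y)$ for some $\cN \in {\tilde\cF}^+$ and $y \in \cN$; it suffices to find a finite $s$ with $y \in H_s^\cN$, since then $\sigma(\pi_{(\cN, s), \infty}(y)) = j_\cN(y) = w$. Taking $s = s_m$ for the first $m$ uniform indiscernibles, the fact that $\sup_m \gamma_m^\cN = \delta^\cN$ combined with the canonical generation of $\cN = \hat\cN | ((\delta^\cN)^{+\omega})^{\hat\cN}$ over $\cN | \delta^\cN$ by the $M_n$-operator, whose relevant $\Sigma_1$-theory is precisely coded by the parameters $T_{s_m, k}^\cN$, yields $y \in H_{s_m}^\cN$ for $m$ sufficiently large. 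The main obstacle is exactly this last step: verifying that $\bigcup_m H_{s_m}^\cN = \cN$ requires carefully unpacking how the sets $T_{s_m, k}^\cN$ encode enough $\Sigma_1$-truth about $\hat\cN$ to define all elements of $\cN$, combined with the cofinality of $(\gamma_m^\cN)_m$ in $\delta^\cN$. Everything else reduces to standard bookkeeping with directed systems and the uniqueness of $s$-iterability embeddings.
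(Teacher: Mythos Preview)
Your proposal is correct and follows essentially the same route as the paper: the paper does not prove Lemma~\ref{lem:dlmequal} directly (it cites \cite{Sa13}), but the analogous Lemma~\ref{lem:dlmeqdlmplus} is proved by exactly your strategy---exhibit a cofinal family of $\Sigma_{M_{n+1}^-}$-iterates inside the $\tilde\cI$-system, observe that on these the hull maps $\pi_{(\cN,s),(\cM,t)}$ agree with the iteration maps, and then define $\sigma$ and verify it is an isomorphism (the paper phrases the last step as $\sigma=\id$, which is equivalent once both limits are transitivized). Your identification of the one nontrivial point, namely $\bigcup_m H_{s_m}^\cN = \cN$ via $\sup_m \gamma_m^\cN = \delta^\cN$ and the $\Sigma_1$-generation of $\cN$ over $\cN|\delta^\cN$ from the $T_{s_m,k}^\cN$'s, is exactly right.
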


\subsection*{The second direct limit system}

To obtain $\HOD$ of some inner model from the direct limit, we in
particular need to show that the direct limit is in fact contained
in $\HOD$ of that inner model. In our setting we therefore need to
internalize the direct limit system into the inner model $M_n(x)[g]$ fixed above. We first aim to define a direct limit system similar to
$(\tilde\cF, \leq_{\tilde\cI})$ in $M_n(x)$ analogous to \cite{Sa13}. In
a second step, we then modify the system to obtain direct limit
systems with the same direct limit which are definable in
$M_n(x)[g]$.

The notion of $n$-suitability from Definition \ref{def:nsuit} is
already internal to $M_n(x)$ and $M_n(x)[g]$, i.e., if $\cN \in M_n(x)|\kappa$
then $\cN$ is $n$-suitable in $V$ iff $\cN$ is $n$-suitable in
$M_n(x)$ by the following lemma.

\begin{lemma}\label{lem:closureSn} Let $\delta_0$ denote the least Woodin cardinal in $M_n(x)$.
  \begin{enumerate}
  \item For all $y \in V^{M_n(x)[g]}_{\delta_0}$, $Lp^n(y) \in \HOD_y^{M_n(x)[g]}$.
  \item $V^{M_n(x)}_{\delta_0}$ and $V^{M_n(x)[g]}_{\delta_0}$ are closed under the operation $y \mapsto Lp^n(y)$. 
  \end{enumerate}
\end{lemma}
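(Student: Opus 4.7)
The plan is to identify $Lp^n(y)$ with the output of a uniform $\cP$-construction-style recipe carried out inside the host model, and to read this recipe off the parameter $y$ in an ordinal-definable way.

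First, I would handle (2) for $y \in V^{M_n(x)}_{\delta_0}$ by working inside $M_n(x)$. Iterate the $\cP$-construction above $y$ in the sense of \cite{SchSt09} and \cite[Proposition~2.3]{Sa13}, stage by stage, stopping the construction at the least stage whose output fails to be sound, fails to project to $y$, or fails to be $n$-small. Let $\cN^*(y)$ denote the resulting stacked premouse. Since $\delta_0$ is inaccessible in $M_n(x)$ and the construction has length bounded by $(|y \cup \omega|^+)^{M_n(x)} < \delta_0$, the object $\cN^*(y)$ is definable in $M_n(x)$ from the parameter $y$ and has rank below $\delta_0$.

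Second, I would argue $\cN^*(y) = Lp^n(y)$ by showing both inclusions. For $\cN^*(y) \unlhd Lp^n(y)$: each sound $\cR \unlhd \cN^*(y)$ with $\rho_\omega(\cR) = y$ has every extender on its sequence backgrounded in $M_{n+1}^\#$, and $\cQ$-structure guided iteration strategies for $n$-small iteration trees on $\cR$ can be read off from $\Sigma_{M_{n+1}}$, since the relevant $\cQ$-structures are initial segments of $M_n$-mice and hence $(\omega,\omega_1,\omega_1)$-iterable in $V$. For the converse: given any $\cR \in Lp^n(y)$, a standard fine-structural comparison in $V$ using the $V$-strategies of $\cR$ and of (initial segments of) $\cN^*(y)$, together with soundness and $\rho_\omega(\cR) = y$, forces $\cR \unlhd \cN^*(y)$. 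For $y \in V^{M_n(x)[g]}_{\delta_0} \setminus V^{M_n(x)}_{\delta_0}$, fix $\alpha < \kappa$ with $y \in M_n(x)[g\upharpoonright \alpha]$ and repeat the argument verbatim with $M_n(x)$ replaced by $M_n(x)[g\upharpoonright\alpha]$. Since $\Col(\omega, {<}\kappa)$ is of size $\kappa < \delta_0$ and adds no new extenders to $M_n(x)|\delta_0$, the internal $\cP$-construction inside $M_n(x)[g \upharpoonright \alpha]$ produces the same family of mice, and $M_{n+1}^\#$ continues to supply backgrounds in $V$. This yields $Lp^n(y) \in \HOD_y^{M_n(x)[g]}$ with rank $< \delta_0$, establishing both (1) and (2).

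The main obstacle is the identification $\cN^*(y) = Lp^n(y)$, which must bridge a purely internal $\cP$-construction (indifferent to iterability) with the external $(\omega,\omega_1,\omega_1)$-iterability condition in the definition of $Lp^n(y)$. Both directions are standard consequences of Steel--Schindler $\cP$-construction theory together with $\cQ$-structure guidance read off from $M_{n+1}^\#$; the only point requiring care is that this certification survives the passage to the generic extension $M_n(x)[g \upharpoonright \alpha]$, which is routine since the relevant background extenders in $M_{n+1}^\#$ live well above the collapse stage and are unaffected by the small forcing.
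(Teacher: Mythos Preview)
Your proposal has a genuine gap: the $\cP$-construction in the sense of \cite{SchSt09} and \cite[Proposition~2.3]{Sa13} does not apply to an arbitrary $y \in V_{\delta_0}$. That construction takes as input a \emph{premouse} $\cN$ whose height is a cardinal cutpoint $\eta$ of the host model, with $\cN \in \cP(M_n(x)|\eta) \cap M_n(x)|(\eta+\omega)$, and then copies the host's extender sequence above $\eta$ onto $\cN$ at the very same indices. There is no ``stage by stage, stop when the output fails to be sound or to project'' procedure; you are conflating the $\cP$-construction with a fully backgrounded $L[E]$- or $K^c$-style construction. For a general $y$ (which need not even be a premouse, let alone one of the required height), the recipe you describe is undefined.

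Even reinterpreting your $\cN^*(y)$ as the output of some internal $L[E]$-type construction, the iterability argument is off. You claim that every extender on $\cN^*(y)$ is backgrounded in $M_{n+1}^\#$; but $M_{n+1}^\#$ is a fixed countable object and cannot background extenders with critical points of arbitrary size below $\delta_0$. The iterability certification must come from the host model $M_n(x)$ (or $M_n(x)[g]$ organized as a premouse), not from $M_{n+1}^\#$. Moreover, the identification $\cN^*(y) = Lp^n(y)$ is not automatic: fully backgrounded constructions need not be \emph{universal} at every level, so one cannot simply read off that every sound $n$-small $y$-mouse embeds as an initial segment without a comparison argument.

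The paper's approach avoids all of this by working directly in $M_n(x)[g]$, organized as the $V_\kappa^{M_n(x)[g]}$-premouse $M_n(V_\kappa^{M_n(x)[g]})$. One runs the fully backgrounded $L[E](y)$-construction there and compares it with $Lp^n(y)$ in $V$; since $Lp^n(y)$ has no total extenders it cannot move, so $Lp^n(y)$ is an initial segment of an iterate $\cR$ of $L[E](y)^{M_n(x)[g]}$. The key definability point is that the extender sequence of $M_n(V_\kappa^{M_n(x)[g]})$ lies in $\HOD^{M_n(x)[g]}$, and the comparison tree uses only order-$0$ measures, so the passage from $L[E](y)^{M_n(x)[g]}$ to $\cR$ (and hence to $Lp^n(y)$) is ordinal definable from $y$. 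This gives (1) directly in $M_n(x)[g]$, and (2) follows by homogeneity of the collapse.
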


\begin{proof}
  Let $y \in V^{M_n(x)[g]}_{\delta_0}$ be arbitrary. The model $M_n(x)[g]$ can be organized as a $V_\kappa^{M_n(x)[g]}$-premouse and as such it inherits the iterability from $M_n(x)$ and is in fact equal to $M_n(V_\kappa^{M_n(x)[g]})$. Consider $L[E](y)^{M_n(x)[g]}$, the result of a fully backgrounded extender construction above $y$ using extenders from the sequence of $M_n(x)[g]$ organized as a $V_\kappa^{M_n(x)[g]}$-premouse, and compare it with 
 $Lp^n(y)$. First, we argue that $Lp^n(y)$ does not move. If it would move, the $Lp^n(y)$-side of the coiteration would have to drop because $Lp^n(y)$ does not have any total extenders. Moreover, it would have to iterate to a proper class model which is equal to an iterate of $L[E](y)^{M_n(x)[g]}$. As $L[E](y)^{M_n(x)[g]}$ has $n$ Woodin cardinals, this would imply that $Lp^n(y)$ has a level which is not $n$-small, contradicting the definition of $Lp^n(y)$.

 Therefore, $Lp^n(y) \lhd \cR$ for some iterate $\cR$ of $L[E](y)^{M_n(x)[g]}$.\footnote{In fact, it is not hard to see that $\cR = L[E](y)^{M_n(x)[g]}$ and hence $Lp^n(y) \lhd L[E](y)^{M_n(x)[g]}$ but we will not need this observation.} The iteration from $L[E](y)^{M_n(x)[g]}$ to $\cR$ resulting from the comparison process can be defined over $L[E](y)^{M_n(x)[g]}$ from the extender sequence of $L[E](y)^{M_n(x)[g]}$ and a finite sequence of ordinals as it cannot leave any total measures behind and thus can only use measures of order $0$. Let $K(V_\kappa^{M_n(x)[g]})^{M_n(V_\kappa^{M_n(x)[g]})}$ denote the core model constructed above $V_\kappa^{M_n(x)[g]}$ inside $M_n(V_\kappa^{M_n(x)[g]})$, an $n$-small premouse with $n$ Woodin cardinals, in the sense of \cite{Sch06}. Then $K(V_\kappa^{M_n(x)[g]})^{M_n(V_\kappa^{M_n(x)[g]})} = M_n(V_\kappa^{M_n(x)[g]})$ by \cite[Lemma 1.1 (due to J. Steel)]{Sch06}. Recall that the reorganizsation of $M_n(x)[g]$ as a $V_\kappa^{M_n(x)[g]}$-premouse is equal to $M_n(V_\kappa^{M_n(x)[g]})$. Hence, the extender sequence of the $V_\kappa^{M_n(x)[g]}$-premouse $M_n(x)[g]$ is in $\OD^{M_n(x)[g]}_{V_\kappa^{M_n(x)[g]}} = \OD^{M_n(x)[g]}$. Therefore $Lp^n(y) \in \HOD^{M_n(x)[g]}$ by the definability of the $L[E]$-construction.

  For $(2)$, the closure of $V^{M_n(x)[g]}_{\delta_0}$ follows immediately from $(1)$. For $V^{M_n(x)}_{\delta_0}$ notice that for $y \in V^{M_n(x)}_{\delta_0}$, $Lp^n(y) \in \HOD_y^{M_n(x)[g]} \subseteq \HOD_y^{M_n(x)}$ by homogeneity of the forcing.
\end{proof}

By \emph{stacking} the $Lp^n$-operation, the uniform proof of Lemma \ref{lem:closureSn} in fact shows that for all $y \in V_{\delta_0}^{M_n(x)}$, $M_n(y)|\kappa_0 \in \HODMn_y$, where $\kappa_0$ denotes the least measurable cardinal in $M_n(y)$.  

The definitions of short tree, maximal tree, and correctly guided finite stack we gave above are internal to $M_n(x)$ and $M_n(x)[g]$ as well, as they can be defined only using the $Lp^n$-operation. The only notion we have to take care of is $s$-iterability since it is not even clear how the sets $T_{s,k}^\cN$ can be identified inside $M_n(x)$. This obstacle is solved by shrinking the direct limit system $(\tilde\cF, \leq_{\tilde\cI})$ to a dense subset as follows. 

\begin{definition}
  Let
  \begin{align*}
    \cG & = \;   \{ \cN \in M_n(x) | \kappa \st \cN \text{ is }
                   n\text{-suitable and } M_n(x) \vDash \text{``for
                      some cardinal}\\
                 & \; \text{cutpoint } \eta,  
                   \delta^\cN = \eta^+, \cN | \delta^\cN \in \cP(M_n(x) | \eta^+) \cap M_n(x) | (\eta^+ + \omega), \\
                &  \;  \text{and } M_n(x) | \eta \text{ is generic
                   over } \cN \text{ for the } \delta^\cN\text{-generator version of} \\ & \; \; \; \; \; \; \text{the extender algebra at } \delta^\cN \text{''} \}.
  \end{align*}
\end{definition}

See for example Section $4.1$ in \cite{Fa} for an introduction to the
$\delta$-generator version of the extender algebra at some Woodin
cardinal $\delta$. 
The following lemma shows how we can use the fact that $\cN \in \cG$ to
detect $M_n(\cN | \delta^{\cN})$ inside $M_n(x)$. For some premouse
$\cR \in \cG$ we denote the last model of a $\cP$-construction
above $\cR|\delta^\cR$ performed inside $M_n(x)$ as introduced in \cite{SchSt09}
(see also Proposition $2.3$ and Definition $2.4$ in \cite{Sa13}) by
$\cP^{M_n(x)}(\cR|\delta^\cR)$.

\begin{lemma}[Lemma $5.11$ in \cite{Sa13}] \label{lem:Pconstr} Let
  $\cN \in M_n(x)|\kappa$ be an $n$-suitable premouse such that for
  some cardinal cutpoint $\eta < \delta^\cN$ of $M_n(x)$, we have that
  $\cN | \delta^\cN \in \cP(M_n(x) | \eta^+) \cap M_n(x) | (\eta^+ + \omega)$ and
  $M_n(x) | \eta$ is generic over $\cN$ for the $\delta^\cN$-generator
  version of the extender algebra at $\delta^\cN$. Then
  $\cN \in \cG$ and
  \[ \cP^{M_n(x)}(\cN | \delta^\cN) = M_n(\cN | \delta^\cN). \] In particular, $M_n(\cN|\delta^\cN)[M_n(x)|\eta] = M_n(x)$.
\end{lemma}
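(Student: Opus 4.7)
The first assertion $\cN\in\cG$ is essentially unpacking definitions once we verify that $\delta^\cN=(\eta^+)^{M_n(x)}$. For this, note that by hypothesis $\cN|\delta^\cN\in M_n(x)|(\eta^++\omega)$ and $M_n(x)|\eta$ is generic over $\cN$ for the $\delta^\cN$-generator extender algebra at $\delta^\cN$; since that forcing has the $\delta^\cN$-c.c.\ in $\cN$ and $\delta^\cN$ is Woodin (hence a cardinal) in $\cN$, standard preservation of cardinals plus $\eta$ being a cardinal cutpoint of $M_n(x)$ forces $\delta^\cN=(\eta^+)^{M_n(x)}$.

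The substantive content is the equation $\cP^{M_n(x)}(\cN|\delta^\cN)=M_n(\cN|\delta^\cN)$. Let $\cP=\cP^{M_n(x)}(\cN|\delta^\cN)$ and recall the key inversion property of the $\cP$-construction from \cite{SchSt09}: because $M_n(x)|\eta$ is generic over $\cN$ (and hence over any premouse extending $\cN|\delta^\cN$) via the $\delta^\cN$-generator extender algebra, every extender on the $M_n(x)$-sequence with critical point ${>}\eta$ can be translated to an extender on $\cP$, and conversely. This gives $\cP[M_n(x)|\eta]=M_n(x)$ as a $V_\kappa$-premouse (above $\eta$), which in particular shows that $\cP$ has $n$ Woodin cardinals (the images of those of $M_n(x)$ above $\eta$), is sound above $\cN|\delta^\cN$, and is $n$-small above $\cN|\delta^\cN$ (since $M_n(x)$ is $n$-small and the translation preserves $n$-smallness above the generic extension point).

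Next I would establish iterability of $\cP$ above $\cN|\delta^\cN$. The strategy is the standard one: given an iteration tree $\cT$ on $\cP$ above $\cN|\delta^\cN$, translate it via the $\cP$-construction correspondence into an iteration tree $\cT^*$ on $M_n(x)$ above $\eta$; since $M_n(x)$ inherits $(\omega,\omega_1,\omega_1)$-iterability from $M_n^\#(x)$ (equivalently from our $\PI^1_{n+1}$-determinacy hypothesis in the form that $M_n^\#(z)$ exists and is $\omega_1$-iterable for all reals $z$), $\cT^*$ has a well-founded branch, and the $\cP$-construction translates this back to a branch for $\cT$.

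Finally, uniqueness. Since $\cP$ is an $(\omega,\omega_1,\omega_1)$-iterable sound premouse over $\cN|\delta^\cN$ which is not $n$-small (it has $n$ Woodin cardinals above) but all of whose proper initial segments are $n$-small (because any non-$n$-small proper initial segment would translate to a non-$n$-small proper initial segment of $M_n(x)$, which does not exist), $\cP$ satisfies the defining properties of $M_n(\cN|\delta^\cN)$. Uniqueness of $M_n$-like mice then gives $\cP=M_n(\cN|\delta^\cN)$, and the ``in particular'' clause is just a restatement of the inversion property $\cP[M_n(x)|\eta]=M_n(x)$. The main obstacle is verifying the translation of iteration trees goes through cleanly once one of the Woodins of $\cP$ is used in $\cT$ (so that $\cT^*$ leaves the generic extension region); here one uses that the extender algebra forcing lies below $\delta^\cN$, so trees above $\delta^\cN$ on $\cP$ and trees above $\eta$ on $M_n(x)$ are in genuine bijective correspondence.
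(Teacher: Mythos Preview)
The paper does not prove this lemma; it is stated as Lemma 5.11 of \cite{Sa13} and cited without argument. Your outline follows the standard $\cP$-construction strategy (inversion $\cP[M_n(x)|\eta]=M_n(x)$, iterability transfer via tree translation, uniqueness by comparison), which is the approach of \cite{SchSt09} and \cite{Sa13}, so in that sense you are on the right track.

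There is one genuine slip in the uniqueness step. You write that $\cP$ ``is not $n$-small (it has $n$ Woodin cardinals above) but all of whose proper initial segments are $n$-small'' and conclude it satisfies the defining properties of $M_n(\cN|\delta^\cN)$. This conflates $M_n$ with $M_n^\#$. The model $\cP=\cP^{M_n(x)}(\cN|\delta^\cN)$ is a \emph{proper class} premouse (the $\cP$-construction runs through all of $M_n(x)$), and it \emph{is} $n$-small, just as $M_n(x)$ itself is $n$-small. Having $n$ Woodin cardinals does not make a premouse fail to be $n$-small; failure of $n$-smallness requires an active extender overlapping $n$ Woodins. The correct characterization is: $\cP$ is a proper class, $n$-small, sufficiently iterable $(\cN|\delta^\cN)$-premouse with exactly $n$ Woodin cardinals, and $M_n(\cN|\delta^\cN)$ is the unique such object (by comparison of the two proper class models, neither side can move or out-iterate the other). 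Once you correct this, your argument goes through; the iterability needed is full iterability above $\delta^\cN$ (not just $(\omega,\omega_1,\omega_1)$), which the tree-translation argument does deliver since $M_n(x)$ is fully iterable above $\eta$.

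A smaller point: in the first paragraph, the hypothesis $\cN|\delta^\cN\in\cP(M_n(x)|\eta^+)\cap M_n(x)|(\eta^++\omega)$ is precisely the setup required for the $\cP$-construction over $\cN|\delta^\cN$ to be defined (cf.\ the paper's convention in Section~\ref{sec:prelim}), and in that setup the height of $\cN|\delta^\cN$ is $(\eta^+)^{M_n(x)}$ by fiat. So $\delta^\cN=(\eta^+)^{M_n(x)}$ is really part of the hypothesis rather than something to be derived, and $\cN\in\cG$ is then immediate.
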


Using pseudo-genericity iterations (see Lemma \ref{lem:pseudogenit})
we can obtain the following corollary.

\begin{cor}\label{cor:itintoG}
  Let $\cN$ be a short tree iterable $n$-suitable premouse such that
  $\cN \in M_n(x) | \kappa$. Then there is a correctly guided finite
  stack on $\cN$ with last model $\cM$ such that $\cM \in \cG$
  and $\cP^{M_n(x)}(\cM | \delta^\cM) = M_n(\cM | \delta^\cM)$.
\end{cor}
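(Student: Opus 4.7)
The plan is to adapt the pseudo-genericity iteration of Lemma \ref{lem:pseudogenit}, which produces a pseudo-iterate making a single real generic, so that it instead makes an initial segment of $M_n(x)$ of size below $\kappa$ generic over $\cN$, and then to invoke Lemma \ref{lem:Pconstr}. Since $\cN \in M_n(x)|\kappa$ and cardinal cutpoints of $M_n(x)$ are cofinal in $\kappa$, I would first fix a cardinal cutpoint $\eta < \kappa$ of $M_n(x)$ with $\cN \in M_n(x)|\eta$. The aim is then to produce a correct pseudo-iterate $\cM$ of $\cN$ such that $\delta^\cM = (\eta^+)^{M_n(x)}$, $\cM|\delta^\cM \in \Pot(M_n(x)|\eta^+) \cap M_n(x)|(\eta^+ + \omega)$, and $M_n(x)|\eta$ is generic over $\cM$ for the $\delta^\cM$-generator extender algebra at $\delta^\cM$.

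Next, I would run, essentially inside $M_n(x)$, a genericity iteration of $\cN$ of length at most $(\eta^+)^{M_n(x)}$. At successor stages, apply the extender on the current premouse's sequence with least length witnessing failure of an extender algebra axiom satisfied by $M_n(x)|\eta$. At limit stages, use the short tree iterability of $\cN$, which is inherited by every correct iterate: when the tree up to that limit is short, follow the unique $\cQ$-structure-guided branch, identifiable inside $M_n(x)$ via Lemma \ref{lem:closureSn}; if the tree becomes maximal, end it there and continue the stack with a new normal tree on the premouse $M_n(\cM(\cT))|(\delta(\cT)^{+\omega})^{M_n(\cM(\cT))}$, as prescribed by the maximal clause of Definition \ref{def:fincorrectlyguidedstack}. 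By the standard counting argument used in the proof of Lemma \ref{lem:pseudogenit}, this process terminates at the $M_n(x)$-cardinal $\eta^+$ with a last model $\cM \in M_n(x)|\kappa$ meeting the three conditions displayed above. This $\cM$ then satisfies the hypotheses of Lemma \ref{lem:Pconstr} with this choice of $\eta$, which simultaneously gives $\cM \in \cG$ and $\cP^{M_n(x)}(\cM|\delta^\cM) = M_n(\cM|\delta^\cM)$, as required.

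The principal obstacle is showing that the genericity iteration terminates at exactly the intended Woodin cardinal $\delta^\cM = (\eta^+)^{M_n(x)}$ and that the resulting finite stack is correctly guided with a genuine pseudo-iterate as its last model. Correct guidance follows from $\cQ$-structure absoluteness together with the closure of $M_n(x)$ under $Lp^n$ below the least Woodin, supplied by Lemma \ref{lem:closureSn}; $n$-suitability of $\cM$ is automatic when the final tree does not drop on its main branch, and is otherwise handled by the maximal clause of Definition \ref{def:fincorrectlyguidedstack}; termination itself is the standard Woodin extender algebra argument, now carried out inside $M_n(x)$ below the least Woodin and for the set $M_n(x)|\eta$ in place of a real.
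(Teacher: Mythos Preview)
Your proposal is correct and follows essentially the same route as the paper, which simply records that the corollary follows from pseudo-genericity iterations (Lemma~\ref{lem:pseudogenit}) together with Lemma~\ref{lem:Pconstr} and gives no further detail. Your sketch---fix a cardinal cutpoint $\eta<\kappa$ above $\cN$, genericity-iterate so that $M_n(x)|\eta$ becomes extender-algebra generic over the resulting model $\cM$ with $\delta^{\cM}=(\eta^+)^{M_n(x)}$, and then invoke Lemma~\ref{lem:Pconstr}---is exactly the argument the paper has in mind (compare the construction of $\cQ_{j+1}$ in the proof of Lemma~\ref{lem:dlmeqdlmplus}).
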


Now the following definition of $s$-iterability agrees with the
previous one given in Definition \ref{def:siterable} for $n$-suitable
premice in $\cG$.

\begin{definition}\label{def:siterableinternal}
  For $\cN \in \cG$, $s \in [\Ord]^{<\omega}$, and $k < \omega$
  let
  \begin{align*}
    T_{s,k}^{\cN,*} = \{ (t, \ulcorner\phi\urcorner) \in 
                        [((\delta^\cN)^{+k})^\cN]^{<\omega} & \times
                                                              \omega \st \phi \text{ is a }
                        \Sigma_1\text{-formula and } \\
                      & \cP^{M_n(x)}(\cN | \delta^\cN) \vDash \phi[t,s]
                        \}. 
  \end{align*}
  Then we say for $\cN \in \cG$ and $s \in [\Ord]^{<\omega}$ that
  $M_n(x) \vDash$ ``$\cN$ is $s$-iterable below $\kappa$'' iff for every $\Col(\omega, {<}\kappa)$-generic $G$ over $M_n(x)$ and every correctly guided finite stack $\vec{\cT} = (\cT_i, \cN_i \st i \leq m) \in \HC^{M_n(x)[G]}$ on
  $\cN$ with last model $\cM \in \cG$, there is a
  sequence of branches $\vec{b} = (b_i \st i \leq m) \in M_n(x)[G]$
  and a sequence of embeddings $(\pi_i \st i \leq m)$ satisfying
  $(i)-(iii)$ in Definition \ref{def:siterable} such that if we let
  $\pi_{\vec{\cT},\vec{b}} = \pi_m \circ \pi_{m-1} \circ \dots \circ
  \pi_0$, then for every $k<\omega$,
  \[ \pi_{\vec{\cT},\vec{b}}(T_{s,k}^{\cN,*}) = T_{s,k}^{\cM,*}. \]
\end{definition}

In addition, we define $M_n(x) \vDash$ ``$\cN$ is strongly
$s$-iterable below $\kappa$'' analogous to Definition
\ref{def:stronglysiterable} for all $\Col(\omega,{<}\kappa)$-generic $G$ and stacks $\vec\cT, \vec\cU \in M_n(x)[G]$. For $\cN \in \cG$,
$s \in [\Ord]^{<\omega}$, and $k < \omega$, we have
$T_{s,k}^{\cN,*} = T_{s,k}^{\cN}$, so we will omit the $*$ for
$\cN \in \cG$. Using this, $\gamma_s^{\cN}$ and $H_s^{\cN}$ are
defined as before. Then we can define the internal direct limit system
as follows.

\begin{definition}
  Let
  \begin{align*}
     \cI = \{ (\cN,s) & \st \cN \in \cG, s \in
    [\Ord]^{<\omega}, \text{ and } \\ & M_n(x) \vDash \text{``} \,\cN \text{
      is strongly } s\text{-iterable below } \kappa\text{''} \}
  \end{align*}
  and
  \[ \cF = \{ H_s^{\cN} \st (\cN,s) \in \cI \}. \]
  Moreover, for $(\cN,s), (\cM,t) \in \cI$ we let
  $(\cN,s) \leq (\cM,t)$ iff there is a correctly guided finite
  stack on $\cN$ with last model $\cM$ and $s \subseteq t$. In this
  case we let as before
  $\pi_{(\cN,s), (\cM,t)}:H_s^{\cN} \rightarrow H_t^{\cM}$ denote the
  canonical corresponding embedding.
\end{definition}

For clarity, we sometimes write $\leq_\cI$ for $\leq$. Similar as before we have that for every $\cN \in \cG$ and $s \in [\Ord]^{<\omega}$ there is a normal correct iterate $\cM$ of $\cN$ such that $(\cM,s) \in \cI$. Using the fact that $\kappa$ is inaccessible and a limit of cutpoints in $M_n(x)$ we can obtain the following lemma.

\begin{lemma}[Lemma $5.14$ in \cite{Sa13}]
  $\leq$ is directed.
\end{lemma}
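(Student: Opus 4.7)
The plan is to adapt the proof of directedness for the external system $\leq_{\tilde\cI}$ (the remark following the definition of $\tilde\cF$) to the internal setting, with additional work to place the common iterate into $\cG$ and to show that external strong iterability descends to internal strong iterability below $\kappa$. Given $(\cN, s), (\cM, t) \in \cI$, set $u = s \cup t$. My goal is to produce $(\cR, u) \in \cI$ with $(\cN, s) \leq (\cR, u)$ and $(\cM, t) \leq (\cR, u)$.

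First, working externally, I would apply Lemma \ref{lem:stronglysiterable} to the short tree iterable $n$-suitable premouse $M_{n+1}^-$ to obtain a pseudo-normal iterate $\cR_0$ of $M_{n+1}^-$ that is strongly $u$-iterable in $V$. I would then apply the pseudo-comparison lemma (Lemma \ref{lem:pseudocomp}) simultaneously to $\cN$, $\cM$, and $\cR_0$ to produce a common pseudo-normal iterate $\cR_1$. Since correct iterates of strongly $u$-iterable premice are strongly $u$-iterable --- obtained by composing $s$-iterability witness branches with the iteration embeddings from the comparison and invoking Lemma \ref{lem:uniqsitemb} --- $\cR_1$ is strongly $u$-iterable in $V$.

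Next, by Corollary \ref{cor:itintoG} there is a further correct iterate $\cR$ of $\cR_1$ with $\cR \in \cG$ and $\cP^{M_n(x)}(\cR|\delta^\cR) = M_n(\cR|\delta^\cR)$. Then $\cR$ is still strongly $u$-iterable in $V$, and by Lemma \ref{lem:Pconstr} the internal objects $T_{s,k}^{\cR,*}$ coincide with the external $T_{s,k}^{\cR}$ for every $k<\omega$, so the two formulations of $\gamma_s^\cR$ and $H_s^\cR$ agree.

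The core step is to verify that $M_n(x) \vDash$ ``$\cR$ is strongly $u$-iterable below $\kappa$''. Fix a $\Col(\omega, {<}\kappa)$-generic $G$ over $M_n(x)$ and a correctly guided finite stack $\vec\cT \in \HC^{M_n(x)[G]}$ on $\cR$ whose last model lies in $\cG$. Externally, the canonical $\cQ$-structure guided branches $\vec b$ witness $u$-iterability for $\vec\cT$. I expect these branches to be $M_n(x)[G]$-definable: at each short stage, the chosen branch is the unique cofinal well-founded branch whose $\cQ$-structure is an initial segment of $M_n$ applied to the common part of the tree, which is computable from the $M_n$-operator; and this operator is absolute between $V$ and $M_n(x)[G]$ on $\HC^{M_n(x)[G]}$ given our iterability hypothesis on $M_n^\#(y)$ for reals $y$. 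At each maximal stage, the next model is forced and thus internally available. The agreement of embeddings across different stacks follows from external strong $u$-iterability together with Lemma \ref{lem:uniqsitemb}. Granted all of this, $(\cR, u) \in \cI$, and $\cR$ being a correct iterate of both $\cN$ and $\cM$ gives $(\cN, s) \leq (\cR, u)$ and $(\cM, t) \leq (\cR, u)$. The main obstacle is this final step, which rests on the absoluteness of the $\cQ$-structure guided strategy branches, i.e., of the $M_n$-operator, across generic extensions of $M_n(x)$ for countable inputs.
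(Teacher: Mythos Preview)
Your approach differs from the paper's. The paper (citing \cite{Sa13}) works entirely internally: the sentence just before the lemma records that for any $\cN \in \cG$ and any $s$ there is a correct iterate $\cM$ of $\cN$ with $(\cM,s)\in\cI$, which is the internal analogue of Lemma~\ref{lem:stronglysiterable} and is established by a bad sequence argument run inside $M_n(x)$. Directedness then follows by taking such a strongly $(s\cup t)$-iterable $\cR\in\cG$, pseudo-comparing $\cN$, $\cM$, $\cR$, and iterating the result back into $\cG$. The paper's explicit mention that $\kappa$ is inaccessible and a limit of cutpoints in $M_n(x)$ is precisely what guarantees these comparisons and the genericity iteration terminate below $\kappa$ and that a cardinal cutpoint $\eta$ with $\delta^\cR=\eta^+$ is available; you never invoke this, which is a sign your route is genuinely different.

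Your alternative---descending from external to internal strong iterability---has a real gap in the last step. You write that ``at each maximal stage, the next model is forced and thus internally available,'' but internal $s$-iterability (Definition~\ref{def:siterableinternal}) demands that the \emph{branches} $\vec b$ lie in $M_n(x)[G]$, not merely the successor models. At a maximal $\cT_i$ the model $\cN_{i+1}$ is indeed determined by $\cM(\cT_i)$ via the $Lp^n$-stack, but the cofinal branch $b_i$ with $\cM_{b_i}^{\cT_i}=\cN_{i+1}$ is not: the one picked by $\Sigma_{M_{n+1}^-}$ in $V$ has no evident definition over $M_n(x)[G]$, and there may be several. This can be repaired---since $\cR$, $\cM$, $\vec\cT$, and the $T_{u,k}$'s are all countable in $M_n(x)[G]$, the existence of \emph{some} sequence of branches moving $T^{\cR}_{u,k}$ to $T^{\cM}_{u,k}$ is $\Sigma^1_1$ in a real coding these data, and hence goes down by absoluteness---but that argument has to be made explicitly. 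You should also verify that the pseudo-comparison and Corollary~\ref{cor:itintoG} keep $\cR$ inside $M_n(x)|\kappa$; this again needs the inaccessibility and cutpoint structure of $\kappa$ that the paper highlights.
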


Therefore we can again define the direct limit.

\begin{definition}
  Let $\dlm$ be the direct limit of $(\cF, \leq)$ under
  the embeddings $\pi_{(\cN,s),(\cM,t)}$. Moreover, let
  $\deltadlm = \delta^{\dlm}$ be the Woodin cardinal in $\dlm$ and
  $\pi_{(\cN,s), \infty} : H_s^{\cN} \rightarrow \dlm$ be the direct
  limit embedding for all $(\cN, s) \in \cI$.
\end{definition}

An argument similar to the one for Lemma \ref{lem:dlmequal} shows that
this direct limit is well-founded as well. As we will use ideas from
this proof in the next section, we will give some details here. We
again first define another direct limit system which consists of
iterates of $M_{n+1}^-$ and then show that its direct limit $\dlm^+$
is equal to $\dlm$.

\begin{definition}
  Let
  \begin{align*}
    \cF^+ = \{ \cQ \in \cG \st \cQ & \text{ is the last
      model of a correctly guided} \\ & \text{finite stack on } M_{n+1}^- \text{
      via } \Sigma_{M_{n+1}^-} \}.
  \end{align*}
  Moreover, let $\cP \leq^+ \cQ$ for $\cP, \cQ \in \cF^+$
  iff there is a correctly guided finite stack on $\cP$ according to
  the tail strategy $\Sigma_{\cP}$ with last model $\cQ$. In this case
  we let $i_{\cP, \cQ}: \cP \rightarrow \cQ$ denote the corresponding
  iteration embedding.
\end{definition}

Then $\leq^+$ on $\cF^+$ is directed, so we can define
the direct limit.

\begin{definition}
  Let $\dlm^+$ be the direct limit of $(\cF^+, \leq^+)$
  under the embeddings $i_{\cP, \cQ}$. Moreover, let
  $i_{\cQ,\infty}: \cQ \rightarrow \dlm^+$ denote the direct limit
  embedding for all $\cQ \in \cF^+$.
\end{definition}

Then it is easy to see that $\dlm^+$ is well-founded as $\cF^+$
only consists of iterates of $M_{n+1}^-$ according to the canonical
iteration strategy $\Sigma_{M_{n+1}^-}$.

\begin{lemma}[Lemma $5.15$ in \cite{Sa13}] \label{lem:dlmeqdlmplus}
  $\dlm^+ = \dlm$ and hence $\dlm$ is well-founded.
\end{lemma}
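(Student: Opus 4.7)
The plan is to follow the structure of the proof of Lemma \ref{lem:dlmequal} adapted to the internal setting in $M_n(x)$: exhibit $(\cF^+, \leq^+)$ as a cofinal subsystem of $(\cI, \leq_\cI)$ whose transition maps agree, so that the direct limits are canonically isomorphic. Well-foundedness of $\dlm^+$ is immediate since $\cF^+$ consists of genuine iterates of $M_{n+1}^-$ via the total strategy $\Sigma_{M_{n+1}^-}$, and this transfers to $\dlm$ once the isomorphism is established.

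First I would show that for every $\cQ \in \cF^+$ and every $s \in [\Ord]^{<\omega}$, $(\cQ, s) \in \cI$. Since $\cQ \in \cG$ by definition, it is $n$-suitable and $T_{s,k}^{\cQ,*} = T_{s,k}^\cQ$ by Lemma \ref{lem:Pconstr}. Any correctly guided finite stack $\vec\cT$ on $\cQ$ in $M_n(x)[G]$ is guided by $\cQ$-structures in the short/correctly-guided case and hence agrees with the tail $\Sigma_\cQ$; the resulting genuine iteration embedding moves $T_{s,k}$-sets correctly (they are images of $T_{s,k}^{M_{n+1}^-}$ under iteration maps). Thus it witnesses strong $s$-iterability below $\kappa$. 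Next, for $\cP \leq^+ \cQ$ in $\cF^+$, the iteration embedding $i_{\cP,\cQ}$ itself witnesses $s$-iterability, so by the uniqueness of $s$-iterability embeddings (Lemma \ref{lem:uniqsitemb}) it restricts on $H_s^\cP$ to $\pi_{(\cP,s),(\cQ,s)}$. Hence the inclusions $H_s^\cQ \hookrightarrow \cQ$ induce a commuting map between the two direct limit systems.

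The main obstacle is cofinality: for every $(\cN, s) \in \cI$ there should exist $\cQ \in \cF^+$ with $(\cN, s) \leq_\cI (\cQ, s)$. Given $(\cN, s) \in \cI$, I would pseudo-compare $\cN$ with $M_{n+1}^-$ using Lemma \ref{lem:pseudocomp}; on the $M_{n+1}^-$ side every branch is guided either by a $\cQ$-structure or by $\Sigma_{M_{n+1}^-}$ in the maximal case, so the common pseudo-normal iterate $\cR$ is naturally an iterate of $M_{n+1}^-$ via $\Sigma_{M_{n+1}^-}$. A further application of the pseudo-genericity iteration (Lemma \ref{lem:pseudogenit}) absorbs $M_n(x)|\eta$ for a suitable cardinal cutpoint $\eta$ of $M_n(x)$ generically over the extender algebra at the bottom Woodin, placing the final iterate $\cQ$ in $\cG$ and hence in $\cF^+$, while preserving $(\cN, s) \leq_\cI (\cQ, s)$. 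Combined with the previous paragraph, this exhibits $\cF^+$ as cofinal in $\cI$ with compatible transition maps, so the induced map on direct limits is an isomorphism; this gives $\dlm^+ = \dlm$ and, via well-foundedness of $\dlm^+$, the well-foundedness of $\dlm$.
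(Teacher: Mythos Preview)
Your approach is essentially the paper's: show that $\cF^+$ sits cofinally in $\cI$ with compatible embeddings, so the two direct limits coincide. The paper packages this slightly differently, building a single cofinal $\omega$-sequence $(\cQ_i)$ in $\cF^+$ (at each stage simultaneously pseudo-comparing everything in $\cG$ below an increasing sequence of cardinal cutpoints and then genericity-iterating back into $\cG$) and defining $\sigma:\dlm\to\dlm^+$ explicitly; your pointwise cofinality argument is cleaner and works just as well.

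There is one gap. In your second paragraph you assert $(\cQ,s)\in\cI$ for every $\cQ\in\cF^+$ and every $s$, arguing that the tail strategy $\Sigma_\cQ$ supplies the required branches. But the internal notion ``strongly $s$-iterable below $\kappa$'' (Definition~\ref{def:siterableinternal}) demands that the witnessing branch sequence $\vec b$ lie in $M_n(x)[G]$, and there is no reason the $\Sigma_\cQ$-branch through a maximal tree in $M_n(x)[G]$ should itself be in $M_n(x)[G]$; this is precisely why one needs the bad-sequence argument of Lemma~\ref{lem:stronglysiterable} rather than getting strong $s$-iterability for free. Fortunately the claim is unnecessary: your third paragraph already produces, for each $(\cN,s)\in\cI$, some $\cQ\in\cF^+$ which is a correct iterate of $\cN$, and then $(\cQ,s)\in\cI$ follows because strong $s$-iterability below $\kappa$ passes to correct iterates by definition. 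That, together with $i_{\cP,\cQ}\upharpoonright H_s^\cP=\pi_{(\cP,s),(\cQ,s)}$ (your appeal to Lemma~\ref{lem:uniqsitemb} is fine once one notes $T_{s,k}^{\cQ,*}=T_{s,k}^\cQ$ for $\cQ\in\cG$), is all that is needed.
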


\begin{proof}
  We construct a sequence $(\cQ_i \st i < \omega)$ of iterates of
  $M_{n+1}^-$ such that $\cQ_i \in \cF^+$ for every
  $i < \omega$ and $(\cQ_i \st i < \omega)$ is cofinal in
  $\cG$, i.e., for every $\cN \in \cG$ there is an
  $i<\omega$ such that $\cQ_i$ is the last model of a correctly guided
  finite stack on $\cN$.

  In $V$, fix some sequence $(\xi_i \st i < \omega)$ of ordinals cofinal in $\kappa$. We define $(\cQ_i \st i < \omega)$ together with a strictly
  increasing sequence $(\eta_i \st i < \omega)$ of cardinal cutpoints
  of $M_n(x)|\kappa$ by induction on $i < \omega$. So let
  $\cQ_0 = M_{n+1}^-$ and let $\eta_0 < \kappa$ be a cardinal cutpoint
  of $M_n(x)$. Moreover assume that we already constructed
  $(\cQ_i \st i \leq j)$ and $(\eta_i \st i \leq j)$ with the above
  mentioned properties such that in addition
  $(\cQ_i \st i \leq j) \in M_n(x) | \eta_j$. Let $\cQ_{j+1}^*$ be the
  result of simultaneously pseudo-comparing (in the sense of Lemma
  \ref{lem:pseudocomp}) all $n$-suitable premice $\cM$ such that
  $\cM \in \cG \cap M_n(x) | \eta_j$. Then in particular
  $\cQ_{j+1}^*$ is a normal iterate of $\cQ_{j}$ according to the
  canonical tail iteration strategy $\Sigma_{\cQ_j}$, but
  $\cQ_{j+1}^*$ might not be in $\cG$. Let $\nu$ be a cardinal
  cutpoint of $M_n(x)$ such that $\eta_j < \nu < \kappa$ and
  $\cQ_{j+1}^* \in M_n(x) | \nu$. Note that such a $\nu$ exists as
  $\kappa$ is inaccessible and a limit of cardinal cutpoints in
  $M_n(x)$. Let $\cQ_{j+1}$ be the normal iterate of $\cQ_{j+1}^*$
  according to the canonical tail strategy $\Sigma_{\cQ_{j+1}^*}$ of
  $\Sigma_{\cQ_j}$ obtained by Woodin's genericity iteration such that
  $M_n(x) | \nu$ is generic over $\cQ_{j+1}$ for the
  $\delta^{\cQ_{j+1}}$-generator version of the extender algebra (see
  for example Section $4.1$ in \cite{Fa}). Then
  $\cQ_{j+1} \in \cG$ is as desired. Finally choose
  $\eta_{j+1} < \kappa$ such that $\eta_{j+1} > \max(\eta_j, \xi_j)$, $\eta_{j+1}$ is a cardinal cutpoint in $M_n(x)$ and $(\cQ_i \st i \leq j+1) \in M_n(x) | \eta_{j+1}$.
  
  Now we define an embedding $\sigma : \dlm \rightarrow \dlm^+$ as
  follows. Let $x \in \dlm$. Since $(\cQ_i \st i < \omega)$ is cofinal
  in $\cG$, there are $i,m<\omega$ such that
  $(\cQ_i,s_m) \in \cI$ and
  $x = \pi_{(\cQ_i,s_m),\infty}(\bar{x})$ for some
  $\bar{x} \in H_{s_m}^{\cQ_i} \subseteq \cQ_i$. Then we let
  $\sigma(x) = i_{\cQ_i,\infty}(\bar{x})$.

  It follows as in the proof of Lemma $5.10$ in \cite{Sa13} that the
  definition of $\sigma$ does not depend on the choice of
  $i,m < \omega$ and in fact $\sigma = \id$.
\end{proof}

Moreover, it is possible to compute $\deltadlm$.

\begin{lemma}[Lemma $5.16$ in \cite{Sa13}]
  $\deltadlm = (\kappa^+)^{M_n(x)}$.
\end{lemma}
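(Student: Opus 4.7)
The plan is to establish the two inequalities $\deltadlm \leq (\kappa^+)^{M_n(x)}$ and $\deltadlm \geq (\kappa^+)^{M_n(x)}$ separately.

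For the upper bound $\deltadlm \leq (\kappa^+)^{M_n(x)}$, I would realize the direct limit system $(\cF,\leq)$ inside $M_n(x)[g]$. The class $\cG$ is internal to $M_n(x)[g]$ because it only uses the $Lp^n$-operation, which by Lemma \ref{lem:closureSn} is in $M_n(x)[g]$, plus the extender-algebra genericity condition; the preorder $\leq$ is internal via the $\cP$-construction of Lemma \ref{lem:Pconstr}; and $s$-iterability on $\cG$ coincides with the internal notion from Definition \ref{def:siterableinternal}. Since $\cG\subseteq M_n(x)|\kappa$, the system has $M_n(x)[g]$-cardinality at most $\kappa$ with each hull $H_s^\cN$ of size $<\kappa$, so $\dlm$ admits a realization in $M_n(x)[g]$ of cardinality at most $\kappa$. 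Because $\Col(\omega,{<}\kappa)$ preserves $(\kappa^+)^{M_n(x)}$, any ordinal of $M_n(x)[g]$-cardinality $\leq\kappa$ is at most $(\kappa^+)^{M_n(x)[g]}=(\kappa^+)^{M_n(x)}$, giving the desired bound.

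For the lower bound $\deltadlm \geq (\kappa^+)^{M_n(x)}$, I would use the cofinal sequence $(\cQ_i)_{i<\omega}$ of iterates of $M_{n+1}^-$ constructed in the proof of Lemma \ref{lem:dlmeqdlmplus}, together with the accompanying increasing sequence of cardinal cutpoints $(\eta_i)_{i<\omega}$ with $\sup_i\eta_i=\kappa$ and $\delta^{\cQ_i}=(\eta_i^+)^{M_n(x)}$. At each stage, the iteration embedding $i_{\cQ_i,\cQ_{i+1}}$ arising from pseudo-comparison followed by a genericity iteration has critical point strictly below $\delta^{\cQ_i}$ and sends $\delta^{\cQ_i}$ to $\delta^{\cQ_{i+1}}>\delta^{\cQ_i}$, so it nontrivially stretches a cofinal chunk of $\cQ_i|\delta^{\cQ_i}$. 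Composing along the $\omega$-chain yields $i_{\cQ_i,\infty}(\delta^{\cQ_i})=\deltadlm$, which strictly dominates every $\delta^{\cQ_j}$, hence $\deltadlm\geq\sup_j\delta^{\cQ_j}=\kappa$. To upgrade to $(\kappa^+)^{M_n(x)}$, I would argue that cofinally many cutpoints $\eta<\kappa$ in $M_n(x)$ produce genuinely new ordinals in the direct limit via the stretching; the accumulation of these contributions, combined with the inaccessibility of $\kappa$ and the fact that $\deltadlm$ is a Woodin cardinal of $\dlm$ (hence cannot be a cardinal strictly between $\kappa$ and $(\kappa^+)^{M_n(x)}$ in $M_n(x)$), forces $\deltadlm$ to hit exactly $(\kappa^+)^{M_n(x)}$.

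The main obstacle is the lower bound, specifically pinning down that $\deltadlm$ reaches $(\kappa^+)^{M_n(x)}$ rather than stopping at $\kappa$. A naive colimit computation using only $\sup_j \delta^{\cQ_j}=\kappa$ gives the weaker bound $\deltadlm\geq\kappa$; the nontrivial work lies in showing that the stretching produced by the critical points of the iteration embeddings along the cofinal chain contributes, after passage to the direct limit, a set of ordinals of order type $(\kappa^+)^{M_n(x)}$ below $\deltadlm$. Making this quantitative requires a careful bookkeeping of how many distinct ordinals each stage of the chain adds, exploiting that $\kappa$ is inaccessible in $M_n(x)$ so that the cutpoints $\eta_i$ can be chosen to densely cover the interval $[0,\kappa)$.
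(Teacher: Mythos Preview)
The paper does not give its own proof of this lemma; it simply cites Lemma~5.16 of \cite{Sa13}. So there is no ``paper's proof'' to compare against, and your proposal has to be judged on its own merits.

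Your upper bound sketch is morally right but skates over one point: $\cI$ is indexed by $s\in[\Ord]^{<\omega}$, a proper class, so ``the system has $M_n(x)[g]$-cardinality at most $\kappa$'' needs justification. The fix is that for fixed $\cN\in\cG$ only set-many hulls $H_s^\cN$ occur (they are subsets of $\cN$), and in fact the hulls $H_{s_m}^\cN$ for $m<\omega$ already exhaust $\delta^\cN$ (this is Lemma~5.3 of \cite{Sa13}, quoted just before Definition~\ref{def:siterable}). With that, a $\kappa$-sized surjection onto $\deltadlm$ can be produced in $M_n(x)$.

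The lower bound, however, has a genuine gap. Your chain argument correctly yields $\deltadlm\geq\kappa$, but the step ``$\deltadlm$ is a Woodin cardinal of $\dlm$, hence cannot be strictly between $\kappa$ and $(\kappa^+)^{M_n(x)}$'' is not valid: Woodinness in the small model $\dlm$ says nothing about being a cardinal in the much larger model $M_n(x)$, and you have not ruled out $\deltadlm=\kappa$ either. The vague appeal to ``stretching'' and ``bookkeeping of how many distinct ordinals each stage adds'' is exactly where the real content lies, and it is not supplied. What is actually needed is an argument that $\deltadlm$ is an $M_n(x)$-cardinal. One route: for each $\cN\in\cG$ with associated cutpoint $\eta$, Lemma~\ref{lem:Pconstr} gives $M_n(\cN|\delta^\cN)[M_n(x)|\eta]=M_n(x)$, so $\hat\cN=M_n(\cN|\delta^\cN)$ and $M_n(x)$ share cardinals above $\delta^\cN$; the iteration maps $\hat i_{\cN,\cM}$ between such $\hat\cN$ therefore fix $\kappa$ and $(\kappa^+)^{M_n(x)}$ pointwise, and one then analyzes the direct limit of the $\hat\cN$'s to see that $\deltadlm$ lands at $(\kappa^+)^{M_n(x)}$. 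Your proposal does not engage with this mechanism.
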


\subsection*{Direct limit systems in $\HODMn$}

Finally, we will argue that $\dlm \in \HODMn$ by first defining direct
limit systems in various premice $M(y)$ satisfying certain properties
definable in $M_n(x)[g]$ and then showing that the direct limits
$\dlm^{M(y)}$ are equal to $\dlm$. A similar approach but in a
completely different setting can be found in \cite{SaSch18}.

In what follows, we will let $(K(z))^N$ denote the core model
constructed above a real $z$ inside some $n$-small model $N$ with $n$
Woodin cardinals in the sense of \cite{Sch06}, i.e., the core model
$K(z)$ is constructed between consecutive Woodin cardinals. Lemma
$1.1$ in \cite{Sch06} (due to John Steel) implies that
$(K(x))^{M_n(x)} = M_n(x)$. We will use this fact and consider more
arbitrary premice with this property in what follows. We
state the following definitions in $V$, but we will later apply them inside
$M_n(x)[g]$.

\begin{definition}\label{def:predlmsuit}
  Let $y \in \BS \cap M_n(x)[g]$. Then we say $y$ is
  \emph{pre-dlm-suitable} iff there is a proper class $y$-premouse
  $M(y)$ satisfying the following properties.
  \begin{enumerate}[$(i)$]
  \item $M(y)$ is $n$-small and has $n$ Woodin cardinals, 
  \item the least inaccessible cardinal in $M(y)$ is $\kappa$,
  \item $M(y) = (K(y))^{M(y)}$, and
  \item there is a $\Col(\omega, {<}\kappa)$-generic $h$ over $M(y)$
    such that \[ M(y)[h] = M_n(x)[g]. \] 
  \end{enumerate}
  We also call such a $y$-premouse $M(y)$ \emph{pre-dlm-suitable} and say
  that \emph{$M(y)$ witnesses that $y$ is pre-dlm-suitable}.
\end{definition}

Using this, we can define a version of the direct limit system
$\cF$ inside arbitrary pre-dlm-suitable $y$-premice $M(y)$.

\begin{definition}
  Let $y \in \BS$ be pre-dlm-suitable as witnessed by $M(y)$. Then we let 
   \begin{align*}
    \cG^{M(y)} & = \;  \{ \cN \in M(y)|\kappa \st \cN \text{ is }
                            n\text{-suitable and } M(y) \vDash \text{``for
                      some cardinal}\\
                 & \; \text{cutpoint } \eta,  
                   \delta^\cN = \eta^+, \cN | \delta^\cN \in \cP(M(y) | \eta^+) \cap M(y) | (\eta^+ + \omega), \\
                &  \;  \text{and } M(y) | \eta \text{ is generic
                   over } \cN \text{ for the } \delta^\cN\text{-generator version of} \\ & \; \; \; \; \; \; \text{the extender algebra at } \delta^\cN \text{''} \}.
  \end{align*}
\end{definition}

Analogous as before, we can now define when for an $n$-suitable premouse $\cN$, $M(y) \vDash \text{``} \cN$ is strongly $s$-iterable below $\kappa$'' by referring to $\cP^{M(y)}(\cN|\delta^\cN)$ in the definition of $(T_{s,k}^{\cN,*})^{M(y)}$. Let $\gamma_s^{\cN,M(y)}$ and $H_s^{\cN,M(y)}$ be defined analogous to $\gamma_s^{\cN}$ and $H_s^{\cN}$ inside $M(y)$ using $(T_{s,k}^{\cN,*})^{M(y)}$. For $M(y) = M_n(x)$ and $\cN \in \cG$ this agrees with our previous definition of strong $s$-iterability.

\begin{definition}
  Let $y \in \BS$ be pre-dlm-suitable as witnessed by $M(y)$. Then we let
  \begin{align*}
    \cI^{M(y)} & = \{ (\cN,s) \st \cN \in \cG^{M(y)}, s \in
                                    [\Ord]^{<\omega}, \text{ and} \\ & M(y)
                                                               \vDash
                                                               \text{``}
                                                               \,\cN
                                                               \text{
                                                               is
                                                               strongly
                                                               }
                                                               s\text{-iterable
                                                               below }
                                                               \kappa\text{''}
                                                               \}  
  \end{align*}
  and
  \[ \cF^{M(y)} = \{ H_s^{\cN,M(y)} \st (\cN,s) \in
    \cI^{M(y)} \}. \] Moreover, for
  $(\cN,s), (\cM,t) \in \cI^{M(y)}$ we let
  $(\cN,s) \leq_{\cI^{M(y)}} (\cM,t)$ iff there is a
  correctly guided finite stack on $\cN$ with last model $\cM$ and
  $s \subseteq t$. In this case we let
  $\pi^{M(y)}_{(\cN,s), (\cM,t)}:H_s^{\cN,M(y)} \rightarrow
  H_t^{\cM,M(y)}$ denote the canonical corresponding embedding. Finally, let $\dlm^{M(y)}$ denote the direct limit of
  $(\cF^{M(y)}, \leq_{\cI^{M(y)}})$ under these
  embeddings.
\end{definition}

We will now strengthen this and define when a real $y \in \BS$ (or a
$y$-premouse $M(y)$) is dlm-suitable.

\begin{definition}\label{def:dlmsuit}
  Let $y \in \BS \cap M_n(x)[g]$ be pre-dlm-suitable as witnessed by
  some $y$-premouse $M(y)$. We say that $y$ is \emph{dlm-suitable}
  (witnessed by $M(y)$) iff
  \begin{enumerate}[$(i)$]
  \item for every $s \in [\Ord]^{<\omega}$ there is a premouse $\cN$
    such that $(\cN,s) \in \cI^{M(y)}$, and
  \item for every $\cN \in \cG^{M(y)}$,
    \[\cP^{M(y)}(\cN|\delta^{\cN}) = K^{M_n(x)[g]}(\cN|\delta^{\cN}).\]
  \end{enumerate}
\end{definition}

\begin{lemma}
  $M_n(x)$ witnesses that $x$ is dlm-suitable.
\end{lemma}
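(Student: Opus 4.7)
I would first verify that $M(x) := M_n(x)$ witnesses pre-dlm-suitability. Clauses~(i) and~(ii) of Definition~\ref{def:predlmsuit} are immediate from our setup: $M_n(x)$ is $n$-small with $n$ Woodin cardinals by construction, and $\kappa$ is its least inaccessible by choice. Clause~(iii), $M_n(x) = (K(x))^{M_n(x)}$, is exactly Steel's Lemma~$1.1$ of \cite{Sch06} cited just before Definition~\ref{def:predlmsuit}. Clause~(iv) holds with $h := g$. With this witness, $\cG^{M_n(x)}$, $\cI^{M_n(x)}$, and $\cF^{M_n(x)}$ coincide with the objects $\cG$, $\cI$, $\cF$ introduced earlier in this section, so conditions~(i) and~(ii) of Definition~\ref{def:dlmsuit} reduce to statements about $\cG$ and $\cI$.

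\textbf{Clause~(i).} Given $s \in [\Ord]^{<\omega}$, I would produce an iterate $\cN \in \cG$ of $M_{n+1}^-$ via $\Sigma_{M_{n+1}^-}$ absorbing $M_n(x)|\eta$ for a suitable cardinal cutpoint $\eta < \kappa$ through a pseudo-genericity iteration (Lemma~\ref{lem:pseudogenit}), following the inductive construction in the proof of Lemma~\ref{lem:dlmeqdlmplus}. Since $\cN$ is an iterate of $M_{n+1}^-$ via its canonical strategy, $\cN$ is strongly $s$-iterable in $V$; the remaining task is to reflect this internally, i.e.\ to check that $M_n(x) \vDash$ ``$\cN$ is strongly $s$-iterable below $\kappa$''. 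For any $\Col(\omega,{<}\kappa)$-generic $G$ over $M_n(x)$ and any correctly guided finite stack $\vec{\cT}\in\HC^{M_n(x)[G]}$ on $\cN$ with last model $\cM\in\cG$, the branches dictated by $\Sigma_{M_{n+1}^-}$ are the unique correctly guided cofinal branches; these are characterized by their $\cQ$-structures, which sit $\unlhd M_n(\cM(\cdot))$ and hence are computable inside $M_n(x)[G]$ via the $\cP$-construction of Lemma~\ref{lem:Pconstr}. Preservation of each $T_{s,k}^{\cN,*}$ under the resulting iteration embeddings is automatic, since $T_{s,k}^{\cN,*}$ codes the $\Sigma_1$-theory of $\hat\cN = M_n(\cN|\delta^{\cN})$ in parameters drawn from $s$, and $\Sigma_{M_{n+1}^-}$ lifts naturally to $\hat\cN$.

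\textbf{Clause~(ii).} For a fixed $\cN \in \cG$ with the cardinal cutpoint $\eta < \delta^{\cN}$ provided by the definition of $\cG$, I would combine Lemma~\ref{lem:Pconstr}, which yields simultaneously
\[
\cP^{M_n(x)}(\cN|\delta^{\cN}) \;=\; M_n(\cN|\delta^{\cN}) \quad\text{and}\quad M_n(\cN|\delta^{\cN})[M_n(x)|\eta] \;=\; M_n(x),
\]
with the observation that $M_n(x)[g]$ is therefore a generic extension of $M_n(\cN|\delta^{\cN})$ by a forcing of size ${<}\kappa$. Since $M_n(\cN|\delta^{\cN})$ is $n$-small with $n$ Woodin cardinals above $\delta^{\cN}$, Steel's lemma applied with the set base $\cN|\delta^{\cN}$ gives $K^{M_n(\cN|\delta^{\cN})}(\cN|\delta^{\cN}) = M_n(\cN|\delta^{\cN})$. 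Generic absoluteness of the core model construction below the first Woodin above $\cN|\delta^{\cN}$ then transfers this across the small forcing, so
\[
K^{M_n(x)[g]}(\cN|\delta^{\cN}) \;=\; K^{M_n(\cN|\delta^{\cN})}(\cN|\delta^{\cN}) \;=\; M_n(\cN|\delta^{\cN}) \;=\; \cP^{M_n(x)}(\cN|\delta^{\cN}),
\]
which is exactly what Definition~\ref{def:dlmsuit}~(ii) demands.

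\textbf{Main obstacle.} The hard part will be justifying clause~(ii): namely, that Steel's identification of the core model with the ambient $n$-Woodin, $n$-small model (stated in~\cite{Sch06} over a real base) continues to hold over the set base $\cN|\delta^{\cN}$, and that the $K$-construction is generically absolute for the $\Col(\omega,{<}\kappa)$-forcing linking $M_n(\cN|\delta^{\cN})$ and $M_n(x)[g]$. The verification of clause~(i) is by contrast a direct transcription of the relevant arguments from~\cite{Sa13}.
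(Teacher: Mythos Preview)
Your argument for clause~(ii) is essentially identical to the paper's: both use Lemma~\ref{lem:Pconstr} to identify $\cP^{M_n(x)}(\cN|\delta^\cN)$ with $M_n(\cN|\delta^\cN)$ and to realize $M_n(x)[g]$ as a generic extension of the latter, then invoke Steel's Lemma~1.1 of \cite{Sch06} together with generic absoluteness of $K$. The paper writes down exactly the displayed chain of equalities you do, and your identification of the ``main obstacle'' (extending Steel's result to a set base and checking generic absoluteness for the relevant forcing) matches what the paper quietly relies on.

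Your argument for clause~(i), however, diverges from the paper's and has a gap. You want to show that a $\Sigma_{M_{n+1}^-}$-iterate $\cN\in\cG$ is strongly $s$-iterable below $\kappa$ in $M_n(x)$ by arguing that the branches dictated by $\Sigma_{M_{n+1}^-}$ are ``characterized by their $\cQ$-structures'' and hence computable in $M_n(x)[G]$. This only covers the short trees in a stack; for a maximal $\cT_i$ there is no $\cQ$-structure for the branch, and the $\Sigma_{M_{n+1}^-}$-branch need not lie in $M_n(x)[G]$. The internal definition of $s$-iterability only demands that \emph{some} witness branches exist in $M_n(x)[G]$, not the strategic ones, so your approach does not immediately collapse---but you have not supplied branches for the maximal-tree case, and your ``direct transcription'' remark understates the issue. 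The paper sidesteps this entirely: it just cites Lemma~\ref{lem:stronglysiterable} (the bad-sequence argument, Lemma~5.9 of \cite{Sa13}) and Corollary~\ref{cor:itintoG}, relying on the fact---stated right after the definition of $\cI$---that the bad-sequence argument goes through inside $M_n(x)$ just as it does in $V$. That argument never needs access to $\Sigma_{M_{n+1}^-}$, which is precisely why it internalizes cleanly.
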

\begin{proof}
  The fact that $M_n(x)$ satisfies $(i)$ follows from Lemma
  \ref{lem:stronglysiterable} and Corollary \ref{cor:itintoG}, so we
  only have to show $(ii)$.  Let $\cN \in \cG$. Then
  $\cP^{M_n(x)}(\cN|\delta^\cN) = M_n(\cN|\delta^\cN)$ by Lemma
  \ref{lem:Pconstr}. Moreover, there is some $G$ generic over the
  result of the $\cP$-construction $\cP^{M_n(x)}(\cN|\delta^\cN)$ for
  the $\delta^\cN$-generator version of the extender algebra at
  $\delta^\cN$ with $\cP^{M_n(x)}(\cN|\delta^\cN)[G] = M_n(x)$. That
  means
  \[ M_n(\cN|\delta^\cN)[G] = M_n(x). \] Now,
  \begin{align*}
    K^{M_n(x)[g]}(\cN|\delta^\cN) & = 
    K^{M_n(\cN|\delta^\cN)[G][g]}(\cN|\delta^\cN) =
    K^{M_n(\cN|\delta^\cN)}(\cN|\delta^\cN) \\ & = M_n(\cN|\delta^\cN) =
    \cP^{M_n(x)}(\cN|\delta^\cN), 
  \end{align*}
  by generic absoluteness of the core model and Lemma 1.1 in
  \cite{Sch06} (due to Steel).
\end{proof}

Condition $(ii)$ in Definition \ref{def:dlmsuit} will ensure that for
any dlm-suitable $y$-premouse $M(y)$ and
$(\cN,s), (\cM,t) \in \cI \cap \cI^{M(y)}$ with
$(\cN,s) \leq_{\cI} (\cM,t)$ and
$(\cN,s) \leq_{\cI^{M(y)}} (\cM,t)$, the induced embeddings
$\pi_{(\cN,s), (\cM,t)}$ and $\pi^{M(y)}_{(\cN,s), (\cM,t)}$
agree. Hence we can show in the following lemma that the direct limit
$\dlm^{M(y)}$ defined inside some dlm-suitable $M(y)$ will in fact be
the same as the direct limit $\dlm$ defined inside $M_n(x)$.

\begin{lemma}\label{lem:dlmM(y)eq}
  Let $y \in \BS$ be dlm-suitable as witnessed by $M(y)$. Then
  $\cF$ and $\cF^{M(y)}$ have cofinally many points in
  common and $\dlm = \dlm^{M(y)}$.
\end{lemma}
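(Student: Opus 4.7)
The plan is to reduce the claim to two sub-claims: (a) that $\cF \cap \cF^{M(y)}$ is cofinal in each of the directed systems $(\cF, \leq_\cI)$ and $(\cF^{M(y)}, \leq_{\cI^{M(y)}})$, and (b) that on the common elements the two families of embeddings coincide. Granted these, the same argument used in the proof of Lemma \ref{lem:dlmeqdlmplus} to identify $\dlm$ with $\dlm^+$ via a cofinal $\omega$-sequence will identify $\dlm$ with $\dlm^{M(y)}$.

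For sub-claim (b), I would combine the lemma just above with condition $(ii)$ of Definition \ref{def:dlmsuit}. That lemma shows $\cP^{M_n(x)}(\cN|\delta^\cN) = K^{M_n(x)[g]}(\cN|\delta^\cN)$ for $\cN \in \cG$, while $(ii)$ applied to $M(y)$ gives $\cP^{M(y)}(\cN|\delta^\cN) = K^{M_n(x)[g]}(\cN|\delta^\cN)$ for $\cN \in \cG^{M(y)}$. Hence for $\cN \in \cG \cap \cG^{M(y)}$ the two $\cP$-constructions coincide, so the trees $T_{s,k}^{\cN,*}$ are the same in $M_n(x)$ and $M(y)$ and consequently $H_s^\cN = H_s^{\cN,M(y)}$ and $\gamma_s^\cN = \gamma_s^{\cN,M(y)}$. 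The notions of short tree, correctly guided finite stack, and (strong) $s$-iterability are defined purely from the $Lp^n$-operator together with the $\cP$-constructions; by Lemma \ref{lem:closureSn} these are absolute between $M_n(x)[g]$ and $M(y)[h]$, which are literally the same model. So the relations $\leq_\cI$ and $\leq_{\cI^{M(y)}}$ agree on pairs in $\cI \cap \cI^{M(y)}$, and by Lemma \ref{lem:uniqsitemb} the embeddings $\pi_{(\cN,s),(\cM,t)}$ and $\pi^{M(y)}_{(\cN,s),(\cM,t)}$ agree on $H_s^\cN$.

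For sub-claim (a), given $(\cN, s) \in \cI$ I would iterate $\cN$ inside the common extension $M_n(x)[g] = M(y)[h]$ to a pseudo-normal iterate $\cM$ which lies in $\cG \cap \cG^{M(y)}$ and is strongly $s$-iterable in both senses. The idea is to run Woodin's genericity iteration as in Lemma \ref{lem:pseudogenit} first to make a suitable initial segment of $M_n(x)$ below $\kappa$ generic over $\cM$, and then extend the iteration to also absorb the corresponding initial segment of $M(y)$, while interleaving pseudo-comparisons (Lemma \ref{lem:pseudocomp}) and the bad-sequence construction of Lemma \ref{lem:stronglysiterable} to preserve strong $s$-iterability. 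A symmetric argument handles a starting $(\cN,s) \in \cI^{M(y)}$. Combining (a) and (b) and following the recipe of Lemma \ref{lem:dlmeqdlmplus}, I would construct a sequence $(\cM_i)_{i<\omega}$ with $\cM_i \in \cF \cap \cF^{M(y)}$ cofinal in both systems and define $\sigma : \dlm \to \dlm^{M(y)}$ by sending $\pi_{(\cM_i, s_m),\infty}(\bar{x})$ to $\pi^{M(y)}_{(\cM_i, s_m),\infty}(\bar{x})$; sub-claim (b) makes $\sigma$ well-defined and the identity, and symmetry makes it a bijection.

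The main obstacle will be sub-claim (a), and specifically pinning down a single Woodin cardinal $\delta^\cM$ that is simultaneously a cardinal-cutpoint-successor in both $M_n(x)$ and $M(y)$. Since $M_n(x)$ and $M(y)$ are two distinct $K$-cores of the same generic extension $M_n(x)[g] = M(y)[h]$, they need not agree below $\kappa$ on which ordinals are cardinal cutpoints, so coordinating the extender-algebra absorptions over both grounds requires care; I expect one can arrange this by choosing $\eta < \kappa$ large enough to be a cardinal cutpoint of both models and then observing that, after genericity iterating $\cN$ past $\eta$, the resulting Woodin cardinal of the iterate ends up as the successor of $\eta$ in each of $M_n(x)$ and $M(y)$, owing to homogeneity of the collapse and the $K$-core equation $M(y) = (K(y))^{M(y)}$ built into Definition \ref{def:predlmsuit}.
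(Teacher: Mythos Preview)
Your reduction to sub-claims (a) and (b) is the right architecture, and your argument for (b) is essentially the paper's: condition $(ii)$ of Definition \ref{def:dlmsuit} together with the computation $\cP^{M_n(x)}(\cN|\delta^\cN) = K^{M_n(x)[g]}(\cN|\delta^\cN)$ forces the $T_{s,k}^{\cN,*}$-trees, hulls, and hence the transition maps to coincide on $\cI \cap \cI^{M(y)}$.

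The gap is in sub-claim (a). You propose to iterate a given $(\cN,s) \in \cI$ inside $M_n(x)[g] = M(y)[h]$ until the result lands in $\cG \cap \cG^{M(y)}$. But membership in $\cG^{M(y)}$ requires in particular that the iterate be an element of $M(y)|\kappa$, and there is no reason your starting $\cN \in M_n(x)$, or any correct iterate of it produced by a construction carried out in the generic extension, should lie in the other ground model $M(y)$. The genericity-iteration you describe makes $M(y)|\eta$ generic over the iterate, but that does not put the iterate itself into $M(y)$. Symmetrically, starting from $(\cN',s') \in \cI^{M(y)}$ you cannot simply iterate into $M_n(x)$. So the interleaving of Lemmas \ref{lem:pseudogenit} and \ref{lem:stronglysiterable} that you sketch does not by itself produce a point of $\cF \cap \cF^{M(y)}$.

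The paper handles this by first producing a \emph{common ground}: from $M(y) = (K(y))^{M_n(x)[g\upharpoonright\xi]}$ and $M_n(x) = (K(x))^{M(y)[h\upharpoonright\xi']}$ one gets $M(y) \subseteq M_n(x)[g\upharpoonright\xi] \subseteq M(y)[h\upharpoonright\zeta]$, and then the intermediate model theorem together with Usuba's result on grounds yields a model $W \subseteq M_n(x) \cap M(y)$ of which $M_n(x)[g\upharpoonright\xi]$ is a small generic extension. One then works in $W[b]$ for a $\Col(\omega,\beta)$-generic $b$ containing names for $x,y,\cN,\cN'$, and simultaneously pseudo-compares and genericity-iterates \emph{all} the interpretations $(\dot\cN)^{b_q}, (\dot\cN')^{b_q}$ over all $q \leq p$, aiming for a common cardinal cutpoint $\eta$ of both models (whose existence you correctly anticipated). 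The resulting $\cM$ is definable in $W[b]$ from parameters in $W$ and the family $\{b_q : q \leq p\}$, hence does not depend on $b$ and lies in $W \subseteq M_n(x) \cap M(y)$; the genericity iteration and the equality $(\eta^+)^{M_n(x)} = (\eta^+)^{M(y)}$ then place $\cM$ in $\cG \cap \cG^{M(y)}$. This homogeneity-over-a-common-ground step is the missing idea in your outline.
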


\begin{proof}
  Let $h$ be $\Col(\omega, {<}\kappa)$-generic over $M(y)$ such that
  $M(y)[h] = M_n(x)[g]$. Let $(\cN,s) \in \cI$ and
  $(\cN^\prime,s^\prime) \in \cI^{M(y)}$. We aim to show that
  there is some $(\cM, t) \in \cI \cap \cI^{M(y)}$ such
  that $(\cN,s) \leq_{\cI} (\cM,t)$ and
  $(\cN^\prime, s^\prime) \leq_{\cI^{M(y)}} (\cM,t)$. As
  condition $(ii)$ in Definition \ref{def:dlmsuit} yields that the
  embeddings associated to $\cF$ and $\cF^{M(y)}$ agree,
  this suffices to show that $\dlm = \dlm^{M(y)}$.

  Let $t = s \cup s^\prime$. By assumption, there is a $t$-iterable
  premouse $\cR$ in $M_n(x)$ and a $t$-iterable premouse $\cR^\prime$
  in $M(y)$. Therefore we can assume that $\cN$ and $\cN^\prime$ are
  both $t$-iterable in $M_n(x)$ and $M(y)$ respectively as we can
  replace them by the result of their coiteration with $\cR$ and
  $\cR^\prime$ respectively.

  By the choice of $M(y)$ and generic absoluteness of the core model
  we have
  \begin{align}
    \label{eq:1}
    M(y) & = (K(y))^{M(y)} = (K(y))^{M(y)[h]} \\ & = (K(y))^{M_n(x)[g]} =
    (K(y))^{M_n(x)[g \upharpoonright \xi]}, \nonumber
  \end{align}
  where $\xi < \kappa$ is
  such that $y \in M_n(x)[g \upharpoonright \xi]$. Analogously, using
  Lemma $1.1$ in \cite{Sch06} due to Steel and generic absoluteness of
  the core model again,
  \begin{align}
    \label{eq:2}
    M_n(x) & = (K(x))^{M_n(x)} = (K(x))^{M_n(x)[g]} \\ & = (K(x))^{M(y)[h]}
    = (K(x))^{M(y)[h \upharpoonright \xi^\prime]}, \nonumber
  \end{align}
  where $\xi^\prime < \kappa$ is such that
  $x \in M(y)[h \upharpoonright \xi^\prime]$. Now we can obtain the following claim. 

  \begin{claim}\label{cl:ctpts}
    $M(y)$ and $M_n(x)$ have cofinally many common 
    cardinal cutpoints below $\kappa$.
  \end{claim}
  \begin{proof}
    As $M(y) = K(y)^{M_n(x)[g \upharpoonright\xi]}$ is an inner model of $M_n(x)[g \upharpoonright \xi]$, every cardinal above $\xi$ in $M_n(x)$ is a cardinal in $M(y)$, so it is easy to find cofinally many common cardinals of $M(y)$ and $M_n(x)$ below $\kappa$. Recall that $\kappa$ is the least inaccessible cardinal of both $M(y)$ and $M_n(x)$. Therefore, every common cardinal of $M(y)$ and $M_n(x)$ below $\kappa$ is also a cutpoint of both models.
  \end{proof}

Moreover, Equations \eqref{eq:1} and \eqref{eq:2} yield
\[M(y) \subseteq M_n(x)[g \upharpoonright \xi] \subseteq M(y)[h
  \upharpoonright \zeta],\] where $\xi^\prime < \zeta < \kappa$ is
such that $g \upharpoonright \xi \in M(y)[h \upharpoonright
\zeta]$. By the intermediate model theorem (see for example Lemma
$15.43$ in \cite{Je03}) this implies that
$M_n(x)[g \upharpoonright \xi]$ is a generic extension of $M(y)$ for a
forcing of size less than $\kappa$.\footnote{I.e. $M(y)$ is a ground
  of $M_n(x)[g \upharpoonright \xi]$. See for example \cite{FHR15} or
  \cite{Us17} for an introduction to the theory of grounds.} Since
$M_n(x)[g \upharpoonright \xi]$ is a generic extension of $M_n(x)$ for
a forcing of size less than $\kappa$ as well, this implies by Theorem
$1.3$ in \cite{Us17} that there is some common inner model
$W \subseteq M_n(x) \cap M(y)$ such that
$M_n(x)[g \upharpoonright \xi]$ is a generic extension of $W$ for a
forcing of size less than $\kappa$.

  As every generic extension via a forcing of size less than $\kappa$
  can be absorbed by the collapse of some ordinal $\beta < \kappa$,
  this yields that we can fix some ordinal $\beta < \kappa$ and some
  $\Col(\omega,\beta)$-generic $b \in M_n(x)[g]$ over $W$ such that
  $x,y,\cN,\cN^\prime \in W[b]$. Then $M_n(x)$ and $M(y)$ exist in
  $W[b]$ as definable subclasses because
  \[(K(x))^{W[b]} = (K(x))^{M_n(x)[g \upharpoonright \xi]} =
    (K(x))^{M_n(x)} = M_n(x)\] and similarly
  \[(K(y))^{W[b]} = (K(y))^{M_n(x)[g \upharpoonright \xi]} =
    (K(y))^{M_n(x)[g]} = (K(y))^{M(y)[h]}= M(y)\] by generic
  absoluteness of the core model again. Let
  $\dot{x},\dot{y},\dot{\cN}$ and $\dot{\cN^\prime}$ be
  $\Col(\omega,\beta)$-names for $x,y,\cN$ and $\cN^\prime$ in
  $W$. Moreover, let $p \in \Col(\omega,\beta)$ force all properties
  we need about $\dot{x},\dot{y},\dot{\cN}$ and
  $\dot{\cN^\prime}$. For $q \leq_{\Col(\omega,\beta)} p$ let $b_q$ be
  the $\Col(\omega,\beta)$-generic filter over $W$ such that
  $\bigcup b_q$ agrees with $q$ on $\dom(q)$ and with $\bigcup b$
  everywhere else.

  Now we construct $(\cM, t) \in \cI \cap
  \cI^{M(y)}$. Let $\eta < \kappa$ be a cardinal
  cutpoint of both $M(y)$ and $M_n(x)$ such that
  $\xi, \xi^\prime < \eta$, which exists by Claim \ref{cl:ctpts}. Then
  in fact $(\eta^+)^{M_n(x)} = (\eta^+)^{M(y)}$ as by Equations
  $(\ref{eq:1})$ and $(\ref{eq:2})$ at the beginning of the proof
  \[ (\eta^+)^{M(y)} \leq (\eta^+)^{M_n(x)[g \upharpoonright \xi]} =
    (\eta^+)^{M_n(x)} \leq (\eta^+)^{M(y)[h \upharpoonright
      \xi^\prime]} = (\eta^+)^{M(y)}. \] By the same argument,
  $(\eta^+)^{K(\dot{x}^{b_q})} = (\eta^+)^{K(\dot{y}^{b_q})}$ for all
  $q \leq_{\Col(\omega,\beta)} p$.

  Work in $W[b]$. Using Lemmas \ref{lem:pseudocomp} and
  \ref{lem:pseudogenit}, we obtain an inner model $\cM$ by
  pseudo-comparing all $(\dot{\cN})^{b_q}$ and
  $(\dot{\cN^\prime})^{b_q}$ for $q \leq_{\Col(\omega,\beta)} p$ and
  simultaneously pseudo-genericity iterating such that
  $K(\dot{x}^{b_q}) | \eta$ and $K(\dot{y}^{b_q}) | \eta$ are generic
  over $\cM$ and
  $\delta^\cM = (\eta^+)^{K(\dot{x}^{b_q})} =
  (\eta^+)^{K(\dot{y}^{b_q})}$. Since $\cM$ is definable in $W[b]$
  from $\{b_q \st q \leq_{\Col(\omega,\beta)} p \}$ and parameters
  from $W$, we have that in fact
  $\cM \in W \subseteq M_n(x) \cap M(y)$, as $\cM$ does not depend on
  the choice of the generic $b$. Moreover, $\cM$ is a correct iterate
  of $\cN$ in $M_n(x)$ and a correct iterate of $\cN^\prime$ in
  $M(y)$.

  As argued above, we can assume that $\cN$ and $\cN^\prime$ are
  $t$-iterable in $M_n(x)$ and $M(y)$ respectively for
  $t = s \cup s^\prime$. Therefore $\cM$ is $t$-iterable in both,
  $M_n(x)$ and $M(y)$. Hence,
  $(\cM,t) \in \cI \cap \cI^{M(y)}$,
  $(\cN,s) \leq_{\cI} (\cM,t)$, and
  $(\cN^\prime, s^\prime) \leq_{\cI^{M(y)}} (\cM,t)$, as
  desired.
\end{proof}

This yields that $\dlm \in \HODMn$.

\section{$\HOD$ below $\deltadlm$}\label{sec:dmr}

In this section we will show that $\HODMn$ and $\dlm$ agree up to
$\deltadlm$ by generalizing the arguments in Section $3.4$ in
\cite{StW16}. We show a version of Woodin's derived model resemblance for our setting. For this, we do not need to talk about generic extensions such as $M(y)[h]$ and work with $M(y)$ directly instead.

We start with expanding our direct limit $\dlm$ to the proper class premouse
$\dlmhat = M_n(\dlm | \deltadlm)$ and define a direct limit system
$\hat{\cF}$ of expansions of the elements of $\cF$ that
converges to $\dlmhat$. For an $n$-suitable premouse $\cN$ and $s \in [\Ord]^{<\omega}$ with
$max(s)$ a uniform indiscernible above the Woodin cardinals in
$M_n(\cN|\delta^{\cN})$, we let $\hat{\cN} = M_n(\cN | \delta^{\cN})$
be the proper class expansion of $\cN$, $s^- = s \setminus max(s)$,
 \[ \gamma_s^{\hat{\cN}} = \sup(Hull_1^{\hat{\cN} |
     max(s)}(s^-) \cap \delta^\cN), \] and
 \[ H_s^{\hat{\cN}} = Hull_1^{\hat{\cN} | max(s)}(\gamma_s^{\hat{\cN}}
   \cup s^-). \] Now we let
 \[ \hat{\cF} = \{ H_s^{\hat{\cN}} \st (\cN,s) \in \cI
   \} \] and for $(\cN,s), (\cM,t) \in \cI$ with
 $(\cN,s) \leq (\cM,t)$ we let
 $\hat{\pi}_{(\cN,s),(\cM,t)} : H_s^{\hat{\cN}} \rightarrow
 H_t^{\hat{\cM}}$ denote the canonical corresponding embedding extending $\pi_{(\cN,s^-),(\cM,t^-)}$.

 Finally, let $\dlmhat$ be the direct limit of
 $(\hat{\cF}, \leq)$ under the embeddings
 $\hat{\pi}_{(\cN,s),(\cM,t)}$ and let $\hat{\pi}_{(\cN,s),\infty} : H_s^{\hat{\cN}} \rightarrow \dlmhat$
 for $(\cN,s) \in \cI$ denote the direct limit embedding. Then it is easy to see that
 $M_n(\dlm | \deltadlm) = \dlmhat$ as we can define a similar direct limit system $\hat{\cF}^+$ of premice expanding the $n$-suitable premice in $\cF^+$.

Choose for
 any ordinal $\alpha$ an arbitrary $(\cN,s) \in \cI$ such that
 $\alpha \in s \setminus max(s)$ and let
 $\alpha^* = \hat{\pi}_{(\cN,s),\infty}(\alpha)$. Note that the value
 of $\alpha^*$ does not depend on the choice of $(\cN,s)$. We also let
 $t^* = \{ \alpha^* \st \alpha \in t \}$ for $t \in [\Ord]^{<\omega}$.

 \begin{lemma}\label{lem:dmr}
   Let $\cN$ be an $n$-suitable premouse and $s \in [\Ord]^{<\omega}$
   such that $(\cN, s) \in \cI$. Let
   $\bar{\xi} < \gamma_s^{\cN}$,
   $\xi = \pi_{(\cN,s),\infty}(\bar{\xi})$ and
   $t \in [\Ord]^{<\omega}$. Moreover, let $\varphi(v_0,v_1)$ be a
   formula in the language of premice, i.e., we allow the extender sequence as a predicate. Then the following are equivalent.
  \begin{enumerate}[$(a)$]
  \item $M_n(\dlm | \deltadlm) \vDash \varphi(\xi, t^*)$,
  \item in $M_n(x)[g]$, there is some dlm-suitable $y \in \BS$
    witnessed by $M(y)$ with $(\cN,s) \in \cI^{M(y)}$ and a correctly guided
    finite stack on $\cN$ with last model $\cM \in M(y)$ such that
    whenever $\cR \in \cG^{M(y)}$ is the last model of a
    correctly guided finite stack on $\cM$, then
    $\cP^{M(y)}(\cR|\delta^\cR) \vDash \varphi(\pi^{M(y)}_{(\cN,s),(\cR,s)}(\bar{\xi}), t)$. 
  \end{enumerate}
\end{lemma}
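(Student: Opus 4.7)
My plan is to prove both directions by exploiting the following central fact: for $\cR\in\cF^+$, the iteration from $M_{n+1}^-$ to $\cR$ via $\Sigma_{M_{n+1}^-}$ is induced by an iteration from $M_{n+1}$ via $\Sigma_{M_{n+1}}$ whose tree acts below the least Woodin cardinal, so the direct limit embedding $i_{\cR,\infty}$ in the $\cF^+$-system lifts canonically to a fully elementary embedding $\hat i_{\cR,\infty}\colon M_n(\cR|\delta^\cR)\to M_n(\dlm|\deltadlm)$. On the hull $H_{s\cup t}^\cR$ this lifted embedding agrees with $\pi_{(\cR,s\cup t),\infty}$, and by the very definitions of $\xi$ and $t^*$ we have $\hat i_{\cR,\infty}(\bar\xi_\cR)=\xi$ and $\hat i_{\cR,\infty}(t)=t^*$, where $\bar\xi_\cR:=\pi_{(\cN,s),(\cR,s)}(\bar\xi)$.

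For $(b)\Rightarrow(a)$, I would fix $y$, $M(y)$, and $\cM$ as in (b). Appealing to Lemma~\ref{lem:dlmM(y)eq}, Lemma~\ref{lem:dlmeqdlmplus}, Lemma~\ref{lem:stronglysiterable}, and Corollary~\ref{cor:itintoG}, I would arrange a correct iterate $\cR$ of $\cM$ that simultaneously lies in $\cF^+$, belongs to both $\cG$ and $\cG^{M(y)}$, and satisfies $(\cR,s\cup t)\in\cI\cap\cI^{M(y)}$. Clause~(ii) of dlm-suitability combined with Lemma~\ref{lem:Pconstr} then gives
\[
\cP^{M(y)}(\cR|\delta^\cR)\;=\;K^{M_n(x)[g]}(\cR|\delta^\cR)\;=\;\cP^{M_n(x)}(\cR|\delta^\cR)\;=\;M_n(\cR|\delta^\cR),
\]
so the instance of (b) at $\cR$ reads $M_n(\cR|\delta^\cR)\vDash\varphi(\bar\xi_\cR,t)$. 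Pushing this statement forward via $\hat i_{\cR,\infty}$ produces (a).

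For $(a)\Rightarrow(b)$, I would take $y=x$, $M(y)=M_n(x)$ (dlm-suitable by the preceding lemma), and the trivial stack on $\cN$, so $\cM=\cN$. If this choice failed, some correct iterate $\cR\in\cG$ of $\cN$ would satisfy $M_n(\cR|\delta^\cR)\vDash\neg\varphi(\bar\xi_\cR,t)$. I would then further iterate $\cR$ via pseudo-comparison against an element of $\cF^+$ (using cofinality of $\cF^+$ in $\cF$) to obtain $\cR'\in\cF^+$, preserving the failure of $\varphi$ by elementarity of the lifted embedding $\hat i_{\cR,\cR'}\colon M_n(\cR|\delta^\cR)\to M_n(\cR'|\delta^{\cR'})$ together with the uniqueness of $s$-iterability embeddings (Lemma~\ref{lem:uniqsitemb}), which ensure $\bar\xi_\cR\mapsto\bar\xi_{\cR'}$. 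Applying $\hat i_{\cR',\infty}$ then yields $M_n(\dlm|\deltadlm)\vDash\neg\varphi(\xi,t^*)$, contradicting (a).

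The main obstacle will be establishing the lift $\hat i_{\cR,\infty}$ and its full elementarity for formulas in the language of premice (including the extender-sequence predicate). This amounts to identifying the iteration tree on the $n$-suitable premouse $\cR$ witnessing $\cR\in\cF^+$ with the corresponding tree on $M_{n+1}$ by $\Sigma_{M_{n+1}}$ acting below the least Woodin, and tracking iteration embeddings in both direct limit systems (the $M_{n+1}^-$-level system $\cF^+$ and the $M_{n+1}$-level system) simultaneously. A secondary but important point is handling the passage in $(a)\Rightarrow(b)$ from a bad iterate in $\cG$ that is not in $\cF^+$ to a bad iterate in $\cF^+$, which uses pseudo-comparison rather than a direct iteration embedding.
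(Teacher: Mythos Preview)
Your overall strategy is the same as the paper's---lift the direct-limit maps from the $n$-suitable level to the $M_n(\cdot)$ level and transfer $\varphi$ along the lifted embedding---but the step you flag as ``by the very definitions,'' namely $\hat i_{\cR,\infty}[t]=t^*$, is exactly where the real work lies and is not immediate.

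The point is that $t^*$ is defined through the $(\cF,\leq_\cI)$-system (the $\pi$-maps, implicitly extended so that ordinals in the parameter $s$ are tracked), whereas $\hat i_{\cR,\infty}$ is the lifted direct-limit map in the $(\cF^+,\leq^+)$-system. The equality $\hat i_{\cR,\infty}[t]=t^*$ is precisely the assertion that the canonical map $\hat\sigma$ between the two \emph{extended} direct limits (at the $M_n$-level, not merely the $n$-suitable level) is the identity. Lemma~\ref{lem:dlmeqdlmplus} only gives $\sigma=\id$ at the suitable level; the paper proves the extended version inline by (i) passing to a cofinal sequence $(\cN_k)$ in $\cF^+$, (ii) defining extended hulls $H_s^{\hat\cN_l}=Hull_1^{\hat\cN_l|\max(s)}(\gamma_s^{\hat\cN_l}\cup s^-)$ together with extended maps $\hat\pi$, (iii) invoking a bad-sequence argument to arrange $\hat i_{\hat\cN_l,\hat\cN_{l+1}}(s)=s$ eventually, and (iv) verifying $\hat\sigma=\id$ as in Lemma~5.10 of \cite{Sa13}. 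Only then does one get $\hat i_{\hat\cN_k,\infty}[t]=t^*$.

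This also affects your $(a)\Rightarrow(b)$ argument. Pushing a single bad $\cR\in\cG$ forward to $\cR'\in\cF^+$ via pseudo-comparison and then to $\dlmhat$ requires the lifted maps to send $t\mapsto t$ (or at least $t\mapsto t^*$ at the end), which you do not control without the stabilization/bad-sequence step. The paper sidesteps this: rather than pushing $\neg\varphi$ forward, it uses the failure of $(b)$ \emph{repeatedly} to produce a cofinal sequence of models each of which \emph{directly} satisfies $\neg\varphi(\bar\xi_k,t)$, and invokes $\hat\sigma=\id$ only once, at the direct limit. Your single-model shortcut is correct in spirit, but to make it rigorous you must first prove the extended $\hat\sigma=\id$ as a separate lemma; once that is in hand, your argument goes through.
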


\begin{proof}
  To prove that $(a)$ implies $(b)$ we assume toward a contradiction
  that $(b)$ is false. So in $M_n(x)[g]$ for all dlm-suitable $y \in \BS$ and $M(y)$
  witnessing this with $(\cN, s) \in \cI^{M(y)}$ and all correctly guided finite
  stacks on $\cN$ with last model $\cM \in M(y)$, there is a correctly
  guided finite stack on $\cM$ with last model
  $\cR \in \cG^{M(y)}$ such that
  $\cP^{M(y)}(\cR|\delta^\cR) \vDash \neg \varphi(\pi^{M(y)}_{(\cN,s),(\cR,s)}(\bar{\xi}), t)$.

  We can assume without loss of generality that $\cN \in M_n(x)$ is
  the last model of a correctly guided finite stack on $M_{n+1}^-$ via
  the canonical iteration strategy $\Sigma_{M_{n+1}^-}$ and strongly
  $s$-iterable below $\kappa$. Moreover, we can assume that $max(s)$ is a uniform indiscernible. If this is not already the case, we replace $\cN$ by a
  pseudo-iterate of the result of the pseudo-comparison of $\cN$ with
  $M_{n+1}^-$ using Lemma \ref{lem:pseudocomp} and Corollary
  \ref{cor:itintoG}.

  \begin{claim}
    There are $n$-suitable premice $\cN_k \in \cF^+$ for
    $k < \omega$ which are cofinal in $\cF^+$ such that
    $\cN_0 = \cN$ and for all $k<\omega$,
    \[ M_n(\cN_k|\delta^{\cN_k}) \vDash \neg\varphi(\bar{\xi}_k,t), \] where
    $\bar{\xi}_k = i_{\cN_0,\cN_k}(\bar{\xi})$ is the image of
    $\bar{\xi}$ under the iteration map induced by
    $\Sigma_{M_{n+1}^-}$.
  \end{claim}
  \begin{proof} Let $(\cQ_i \st i < \omega)$ be an enumeration of
    $\cF^+$ and $\cN_0 = \cN$. Then we construct $\cN_{k+1}$
    inductively. So assume that we already constructed $\cN_k$ and
    pseudo-coiterate $\cN_k$ with $\cQ_k$ to some model $\cN_k^*$ (see
    Lemma \ref{lem:pseudocomp}). By assumption $(b)$ is false, so let
    $\cR$ be a counterexample witnessing this for $\cN_k^*$ and the
    dlm-suitable premouse $M_n(x)$. That means $\cR \in \cG$ is
    the last model of a correctly guided finite stack on $\cN_k^*$
    such that
    $M_n(\cR|\delta^\cR) = \cP^{M_n(x)}(\cR|\delta^\cR) \vDash \neg \varphi(i_{\cN,\cR}(\bar{\xi}),t)$ as
    $i_{\cN,\cR} \upharpoonright H_s^{\cN} =
    \pi_{(\cN,s),(\cR,s)}$. But $\cR \in \cF^+$ since
    $\cR \in \cG$ and it is a correct iterate of $\cQ_k$. Thus
    we can let $\cN_{k+1} = \cR$.
  \end{proof}

  Since $(\cN_k \st k < \omega)$ is cofinal in
  $\cF^+$, it follows that the direct limit of
  $(\cN_k, i_{\cN_k,\cN_l} \st k<l<\omega)$ is equal to $\dlm^+$. Let
  $\hat{\cN_k} = M_n(\cN_k | \delta^{\cN_k})$ and let
  $\hat{i}_{\hat{\cN}_k,\infty} : \hat{\cN_k} \rightarrow M_n(\dlm^+ |
  \delta^{\dlm^+}) = \dlmhat^+$ be the corresponding extension of the
  direct limit map $i_{\cN_k, \infty}$. Then we have for all sufficiently large $k$ that
  \[ M_n(\dlm^+ | \delta^{\dlm^+}) \vDash \neg\varphi(i_{\cN_k,
      \infty}(\bar{\xi}_k),
    \hat{i}_{\hat{\cN}_k,\infty}[t]). \] Since we assumed
  that $\cN$ is strongly $s$-iterable below $\kappa$ and
  $\bar{\xi} < \gamma_s^{\cN}$, it follows that
  $i_{\cN_k, \infty}(\bar{\xi}_k) = i_{\cN,\infty}(\bar{\xi}) =
  \pi_{(\cN,s),\infty}(\bar{\xi}) = \xi$ as
  $\bar{\xi}_k = i_{\cN,\cN_k}(\bar{\xi})$.

  Let $k<\omega$ be large enough such that
  $(\cN_k, s \cup t) \in \cI$ and
  $\hat{i}_{\hat{\cN}_l,\hat{\cN}_{l+1}}(s \cup t) = s \cup t$ for all $l \geq
  k$. Such a $k$ exists by a so-called bad sequence argument similar
  to the one in the proof of Lemma $5.8$ in \cite{Sa13}.



 Consider the map
  \[ \hat{\sigma}: M_n(\dlm|\deltadlm) = \dlmhat \rightarrow \dlmhat^+ = 
    M_n(\dlm^+|\delta^{\dlm^+}) \] which is the canonical extension of
  the map $\sigma: \dlm \rightarrow \dlm^+$ defined in the proof of
  Lemma \ref{lem:dlmeqdlmplus}. That means, for $x \in \dlmhat$, say
  $x = \hat{\pi}_{(\cN_l,r),\infty}(\bar{x})$ for some
  $\bar{x} \in H_r^{\hat{\cN}_l}$, we may assume similar as above that $l \geq k$ is large enough such that $(\cN_l,r) \in \cI$ and $\hat{i}_{\hat{\cN}_j,\hat{\cN}_{j+1}}(r) = r$ for all $j \geq l$. Then we let
  $\hat{\sigma}(x) = \hat{i}_{\hat{\cN}_l,\infty}(\bar{x})$. Now it
  follows from a generalization of the proof of \cite[Claim 2]{Schl} that
  $\hat{\sigma} = \id$. Moreover, we have that
  $\hat{\sigma}[t^*] = \hat{\sigma}(\hat{\pi}_{(\cN_k,s \cup
    t),\infty}[t]) = \hat{i}_{\hat{\cN}_k,\infty}[t]$. Therefore
  pulling back under $\hat{\sigma}$ yields that
  \[ M_n(\dlm | \deltadlm) \vDash \neg\varphi(\xi,t^*). \] This is the desired
  contradiction to $(a)$.

  To show that $(b)$ implies $(a)$ we now assume that $(b)$ is
  true. Let $M(y)$ be the dlm-suitable premouse with $(\cN,s) \in \cI^{M(y)}$
  given by $(b)$. As before we can assume without loss of generality
  that $\cN$ is the last model of a correctly guided finite stack on
  $M_{n+1}^-$ via the canonical iteration strategy
  $\Sigma_{M_{n+1}^-}$, that $\cN$ is strongly $s$-iterable below $\kappa$ with
  respect to branches choosen by $\Sigma_{M_{n+1}^-}$, that $max(s)$ is a uniform indiscernible, and that
  $\cN \in \cG \cap \cG^{M(y)}$ using Lemma \ref{lem:dlmM(y)eq}.

  \begin{claim}
    There are $n$-suitable premice $\cN_k \in \cF^+$ for
    $k < \omega$ which are cofinal in $\cF^+$ such that
    $\cN_0 = \cN$ and for all $k<\omega$,
    \[ M_n(\cN_k|\delta^{\cN_k}) \vDash \varphi(\bar{\xi}_k,t), \] where
    $\bar{\xi}_k = i_{\cN_0,\cN_k}(\bar{\xi})$ is the image of
    $\bar{\xi}$ under the iteration map induced by $\Sigma_{M_{n+1}^-}$.
  \end{claim}

  \begin{proof}
    By the proof of Lemma \ref{lem:dlmM(y)eq}, we can pick a sequence $(\cQ_i \st i < \omega)$ of premice cofinal in $\cF^+$ such that $\cQ_i \in \cF^{M(y)}$ for all $i < \omega$. Let $\cN_0 = \cN$ and construct $\cN_{k+1} \in M(y)$
    inductively. Assume that we already constructed $\cN_k$ and let
    $\cM \in M(y)$ be the last model of a correctly guided finite stack on
    $\cN$ witnessing that $(b)$ is true. Simultaneously
    pseudo-coiterate $\cM$ with $\cN_k$ and $\cQ_k$ to some premouse
    $\cN_k^*$. Using genericity iterations and Lemma
    \ref{lem:dlmM(y)eq}, there is a pseudo-iterate $\cR$ of $\cN_k^*$
    such that $\cR \in \cG \cap \cG^{M(y)}$ (see also
    Corollary \ref{cor:itintoG}). In particular, we have by dlm-suitability of
    $M(y)$ that
    \begin{align*}
      \cP^{M(y)}(\cR|\delta^\cR) & = K^{M_n(x)[g]}(\cR|\delta^\cR) = K^{M_n(\cR|\delta^\cR)[G][g]}(\cR|\delta^\cR) \\ & = K^{M_n(\cR|\delta^\cR)}(\cR|\delta^\cR) = M_n(\cR|\delta^\cR) 
    \end{align*}
    for some $G$ generic over $M_n(\cR|\delta^\cR)$ for the extender algebra and therefore
    $M_n(\cR|\delta^\cR) \vDash \varphi(i_{\cN,\cR}(\bar{\xi}),t)$ using
    $(b)$ as
    $i_{\cN,\cR}(\bar{\xi}) = \pi_{(\cN,s),(\cR,s)}(\bar{\xi}) =
    \pi^{M(y)}_{(\cN,s),(\cR,s)}(\bar{\xi})$. Moreover, $\cR$ is the last model of a
    correctly guided finite stack on $\cQ_k$ and thus
    $\cR \in \cF^+$, so we can let $\cN_{k+1} = \cR$.
  \end{proof}

  As before we can use this claim to obtain that
  \[ M_n(\dlm | \deltadlm) \vDash \varphi(\xi,t^*), \] which proves $(a)$.
\end{proof}

Let $\kappa_\infty$ be the least inaccessible cardinal above $\deltadlm$ in $\dlmhat = M_n(\dlm|\deltadlm)$ and fix some $H$ which is $\Col(\omega, {<}\kappa_\infty)$-generic over $\dlmhat$. Then Lemma \ref{lem:dmr} implies for example that $\dlmhat[H]$ and $M_n(x)[g]$ are elementary equivalent (for formulae in the language of set theory) as for $\cR$ as in the statement of Lemma \ref{lem:dmr}, there is some $\Col(\omega, {<}\kappa^\cR)$-generic $G$, where $\kappa^\cR$ is the least inaccessible cardinal above $\delta^\cR$ in $\cP^{M(y)}(\cR|\delta^\cR)$, such that $\cP^{M(y)}(\cR|\delta^\cR)[G] = M_n(x)[g]$.

We defined a direct limit system $\cF^{M(y)}$ for all dlm-suitable $M(y)$ in $M_n(x)[g]$. Therefore, there is a direct limit system $\cF^{*, M(y)}$ with the same properties for each dlm-suitable $M(y)$ in $\dlmhat[H]$ (adapting the definition of dlm-suitable to $\dlmhat[H]$). It is easy to see that Lemma \ref{lem:dmr} implies that for each $s \in [\Ord]^{<\omega}$ there is a $y_s$ such that $\dlm \in \cG^{M(y_s)}$ and $\dlm$ is strongly $s^*$-iterable in $M(y_s)$ in $\dlmhat[H]$. In fact, the direct limit embedding $(\pi_{(\dlm,s^*),\infty}^{M(y_s)})^{\cF^{*, M(y_s)}}$  in the system $\cF^{*, M(y_s)}$ is independent of the choice of $y_s$ and we can consider
\begin{align*}
  \pi_\infty & = \bigcup \{ (\pi_{(\dlm,s^*),\infty}^{M(y_s)})^{\cF^{*, M(y_s)}}
  \st s \in [\Ord]^{<\omega} \text{ and } y_s \text{ is such that } \\ & \dlm \in \cG^{M(y_s)} \text{ and } \dlm \text{ is strongly } s^*\text{-iterable in } M(y_s) \text{ in } \dlmhat[H] \}.
\end{align*}

\begin{lemma}\label{lem:piinftyid}
  For all $\eta < \deltadlm$ we have that $\pi_\infty(\eta) = \eta^*$.
\end{lemma}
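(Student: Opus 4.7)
The plan is to compute the internal direct limit image $\pi^{M(y)}_{(\dlm,s^*),\infty}(\eta)$ and match it to the external shift $\eta\mapsto\eta^*$ by mirroring the proof of Lemma \ref{lem:dlmeqdlmplus} inside $\dlmhat[H]$, invoking the derived model resemblance of Lemma \ref{lem:dmr}.

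First I would fix $\eta<\deltadlm$ and choose $(\cN,s)\in\cI$ with $\eta\in s$, so that by definition $\eta^*=\pi_{(\cN,s),\infty}(\eta)$. Using Lemma \ref{lem:stronglysiterable}, Corollary \ref{cor:itintoG}, and Lemma \ref{lem:dlmM(y)eq}, enlarge $s$ if necessary to arrange two reductions: (i) $\eta<\gamma_{s^*}^{\dlm}$, so $\eta\in H_{s^*}^{\dlm}$ and hence $\pi_\infty(\eta)=\pi^{M(y)}_{(\dlm,s^*),\infty}(\eta)$ by the very definition of $\pi_\infty$; and (ii) $(\cN,s)\in\cI\cap\cI^{M(y)}$ for a dlm-suitable $M(y)$, with $\pi_{(\cN,s),\infty}=\pi^{M(y)}_{(\cN,s),\infty}$ on $H_s^{\cN}$. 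The task then reduces to showing $\pi^{M(y)}_{(\dlm,s^*),\infty}(\eta)=\eta^*$.

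Next, inside $\dlmhat[H]$, I would mirror the proof of Lemma \ref{lem:dlmeqdlmplus} applied to the internal system $\cF^{*,M(y)}$: starting from $\cR_0=\dlm$, iteratively pseudo-compare and pseudo-genericity iterate (using the internal versions of Lemmas \ref{lem:pseudocomp} and \ref{lem:pseudogenit}) to produce a sequence $(\cR_k:k<\omega)$ of iterates of $\dlm$ in $\cF^{*,M(y)}$ which is cofinal in $\cG^{*,M(y)}$. Then
\[\pi^{M(y)}_{(\dlm,s^*),\infty}(\eta)=\lim_{k<\omega}\pi^{M(y)}_{(\dlm,s^*),(\cR_k,s^*)}(\eta).\]
By Lemma \ref{lem:dmr} applied with $\bar\xi=\eta$ and appropriate parameters, each internal image $\pi^{M(y)}_{(\dlm,s^*),(\cR_k,s^*)}(\eta)$ is determined by the $\Sigma_1$-theory of $\cP^{M(y)}(\cR_k|\delta^{\cR_k})$ with parameter $\eta$, which in turn matches, via the $\cP$-construction correspondence, the $\Sigma_1$-theory of a corresponding external iterate's $\cP$-construction with parameter $\pi_{(\cN,s),(\cN_k',s)}(\eta)$ for a matching external iterate $\cN_k'\in\cF^+$. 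Along the external cofinal chain $(\cN_k')$ in $\cF^+$, these external images converge to $\pi_{(\cN,s),\infty}(\eta)=\eta^*$, so the internal images converge to $\eta^*$ as well, yielding $\pi_\infty(\eta)=\eta^*$.

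The main technical obstacle is the coherent pairing of the internal cofinal sequence $(\cR_k)$ in $\dlmhat[H]$ with an external cofinal sequence $(\cN_k')$ in $\cF^+$, ensuring that at each stage the internal and external images of $\eta$ are literally the same ordinal, and not merely ordinals satisfying the same theory. This requires applying the derived model resemblance of Lemma \ref{lem:dmr} uniformly along the cofinal sequence, handling the $\Sigma_1$-hull bookkeeping that was used in the proof of Lemma \ref{lem:dlmeqdlmplus} to conclude $\sigma=\id$.
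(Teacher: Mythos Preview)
Your approach is substantially more indirect than the paper's, and the step you flag as the ``main technical obstacle'' --- the coherent pairing of internal and external cofinal sequences --- is not just bookkeeping but the entire content of the lemma, and you have not actually explained how to carry it out. Your invocation of Lemma~\ref{lem:dmr} ``with $\bar\xi=\eta$ and appropriate parameters'' to control the individual stage-$k$ images $\pi^{M(y)}_{(\dlm,s^*),(\cR_k,s^*)}(\eta)$ is vague: Lemma~\ref{lem:dmr} transfers first-order statements about $M_n(\dlm|\deltadlm)$ to cofinal-in-$\cG^{M(y)}$ statements in $M_n(x)[g]$, not statements matching a specific $\cR_k$ in $\dlmhat[H]$ with a specific $\cN_k'$ in $V$. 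There is no mechanism in your sketch forcing the internal ordinal at stage $k$ to literally equal an externally computed ordinal.

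The paper's proof avoids this entirely by encoding the \emph{direct limit map itself} in a single formula and applying Lemma~\ref{lem:dmr} once. Concretely, it writes $\eta=\pi_{(\cN,s),\infty}(\bar\eta)$ for some $(\cN,s)\in\cI$ and $\bar\eta<\gamma_s^\cN$, and takes $\varphi(v_0,v_1,v_2,v_3)$ to assert (over the collapse) that for every dlm-suitable $M(y)$ with $v_0\in\cG^{M(y)}$ one has $(v_0,v_1)\in\cI^{M(y)}$, $v_2<\gamma_{v_1}^{v_0,M(y)}$, and $\pi^{M(y)}_{(v_0,v_1),\infty}(v_2)=v_3$. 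In $M_n(x)[g]$, every iterate $\cR\in\cG$ of $\cN$ satisfies $\cP^{M(y)}(\cR|\delta^\cR)\vDash\varphi(\cR,s,\pi_{(\cN,s),(\cR,s)}(\bar\eta),\eta)$, simply because all the dlm-suitable direct limit systems agree and send $\pi_{(\cN,s),(\cR,s)}(\bar\eta)$ to $\eta$. A single application of Lemma~\ref{lem:dmr} then gives $\dlmhat\vDash\varphi(\dlm,s^*,\eta,\eta^*)$, i.e.\ $\pi_\infty(\eta)=\eta^*$. The point you are missing is that the direct limit embedding is itself a definable object over $\cP^{M(y)}(\cR|\delta^\cR)$ (via the forcing relation), so there is no need to chase cofinal sequences --- one transfers the value of the limit map in one shot.
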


\begin{proof}
  This is again a consequence of Lemma
  \ref{lem:dmr}. Consider the dlm-suitable premouse $M_n(x)$. Let
  $\eta = \pi_{(\cN,s),\infty}(\bar{\eta})$ for some
  $(\cN,s) \in \cI$ and $\bar{\eta} < \gamma_s^{\cN}$ and
  consider the formula
  \begin{align*}
    \varphi(&v_0,v_1,v_2,v_3) = \text{``}1 \forces{}{\Col(\omega,{<}\kappa^*)} \text{for all dlm-suitable } y \text{ with } v_0 \in \cG^{M(y)}, \\ &
    \text{we have } (v_0,v_1) \in \cI^{M(y)}, v_2 <
    \gamma_{v_1}^{v_0, M(y)}, \text{ and } 
    \pi_{(v_0,v_1),\infty}^{M(y)}(v_2) = v_3\text{''},
  \end{align*}
  where $\kappa^*$ refers to the least inaccessible cardinal above the least Woodin cardinal of the current model. Recall that for any dlm-suitable $y$ and $z$ witnessed by $M(y)$ and $M(z)$, for any $(\cN,s), (\cM,t) \in \cI^{M(y)} \cap \cI^{M(z)}$ with $(\cN,s) \leq_{\cI^{M(y)}} (\cM,t)$ and $(\cN,s) \leq_{\cI^{M(z)}} (\cM,t)$ the induced embeddings $\pi_{(\cN,s),(\cM,t)}^{M(y)}$ and $\pi_{(\cN,s),(\cM,t)}^{M(z)}$ agree. Hence, in $M_n(x)[g]$, we have for every $\cR \in \cG$ which is the last model
  of a correctly guided finite stack on $\cN$ that for all dlm-suitable $y$ such that $\cR \in \cG^{M(y)}$, in fact $(\cR,s) \in \cI^{M(y)}$, $\pi_{(\cN,s),(\cR,s)}(\bar{\eta}) < \gamma_{s}^{\cR, M(y)}$, and $\pi_{(\cR,s),\infty}^{M(y)}(\pi_{(\cN,s),(\cR,s)}(\bar{\eta})) = \eta$. Therefore, 
  $\cP^{M(y)}(\cR|\delta^{\cR}) \vDash \varphi(\cR, s,
  \pi_{(\cN,s),(\cR,s)}(\bar{\eta}), \eta)$. So Lemma \ref{lem:dmr} 
  yields that $\dlmhat \vDash \varphi(\dlm, s^*, \eta, \eta^*)$. So in $\dlmhat[H]$, for all dlm-suitable $y$ with $\dlm \in \cG^{M(y)}$, we have $(\dlm,s^*) \in \cI^{M(y)}$ and $(\pi^{M(y)}_{(\dlm,s^*),\infty})^{\cF^*, M(y)}(\eta) = \eta^*$, as desired.
\end{proof}

\begin{thm}\label{thm:HODuptoDelta}
  $V_{\deltadlm}^{\HODMn} = V_{\deltadlm}^{\dlm}$. 
\end{thm}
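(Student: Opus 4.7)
The proof has two directions. For the easy inclusion $V_{\deltadlm}^{\dlm} \subseteq V_{\deltadlm}^{\HODMn}$, I use that $\dlm \in \HODMn$, as established at the end of Section~\ref{sec:dlm} via Lemma~\ref{lem:dlmM(y)eq}. A routine rank induction shows $V_\eta^{N} \subseteq V_\eta^{M}$ whenever $N \subseteq M$ are transitive classes, and specializing to $N = \dlm$, $M = \HODMn$, $\eta = \deltadlm$ gives the desired inclusion. For the hard inclusion $V_{\deltadlm}^{\HODMn} \subseteq V_{\deltadlm}^{\dlm}$ I would proceed by simultaneous induction on $\eta \leq \deltadlm$, showing $V_\eta^{\HODMn} = V_\eta^{\dlm}$. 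The limit and base steps are immediate, so the work is at the successor step, where it suffices to prove that every $A \in \HODMn$ with $A \subseteq V_{\eta'}^{\dlm}$ lies in $\dlm$, for $\eta' < \deltadlm$. Since $\deltadlm$ is Woodin in $\dlm$ and hence inaccessible there, the inductive hypothesis supplies a bijection $f \in \dlm \subseteq \HODMn$ between $V_{\eta'}^{\dlm}$ and some ordinal $\bar\alpha < \deltadlm$, and we may replace $A$ by $f[A]$ to assume $A \subseteq \bar\alpha < \deltadlm$.

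Fix $\psi$ and $\bar t \in [\Ord]^{<\omega}$ with $A = \{\beta < \bar\alpha : M_n(x)[g] \vDash \psi[\beta, \bar t]\}$. By homogeneity of $\Col(\omega, {<}\kappa)$, I rephrase this via the forcing relation in $M_n(x)$; concretely, let $\varphi(v_0, v_1)$ be the formula $\mathbf{1} \Vdash_{\Col(\omega, {<}\kappa^*)} \psi(\check v_0, \check v_1)$ in the language of premice, where $\kappa^*$ denotes the least inaccessible cardinal above the bottom Woodin of the ambient model. Using directedness and cofinality of $\cI$ together with Corollary~\ref{cor:itintoG}, I choose $(\cN, s) \in \cI$ so that $\bar\alpha$ and the entries of $\bar t$ admit preimages $\tilde\alpha$, $\tilde t$ in $\cN$ under the direct limit map, with $\pi_{(\cN,s),\infty}(\tilde\alpha) = \bar\alpha$ and $\pi_{(\cN,s),\infty}[\tilde t] = \bar t$. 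Applying Lemma~\ref{lem:dmr} to $\varphi$ at each $\beta < \bar\alpha$ of the form $\beta = \pi_{(\cN,s),\infty}(\tilde\beta)$, the statement $\dlmhat \vDash \varphi[\beta, \bar t]$ (with $\dlmhat = M_n(\dlm|\deltadlm)$) translates to $\cP^{M(y)}(\cR|\delta^\cR) \vDash \varphi[\pi^{M(y)}_{(\cN,s),(\cR,s)}(\tilde\beta), \tilde t]$ for cofinally many correct iterates $\cR$; unwinding the forcing relation, using the identity $\cP^{M(y)}(\cR|\delta^\cR)[G] = M_n(x)[g]$ from the discussion after Lemma~\ref{lem:dmr}, and invoking Lemma~\ref{lem:piinftyid} to reconcile the iterated preimage with $\beta$, this becomes exactly $\beta \in A$. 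Hence $A$ is definable over $\dlmhat$ from $\bar t$, and therefore $A \in \dlmhat$.

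Because the $M_n$-construction yielding $\dlmhat = M_n(\dlm|\deltadlm)$ adds new information only at or above $\deltadlm$, we have $\dlmhat|\deltadlm = \dlm|\deltadlm$, and so $V_{\deltadlm}^{\dlmhat} = V_{\deltadlm}^{\dlm}$. Since $A \subseteq \bar\alpha < \deltadlm$ and $A \in \dlmhat$, I conclude $A \in V_{\deltadlm}^{\dlm}$, closing the induction. The step I expect to be the hardest is the parameter bookkeeping inside the application of Lemma~\ref{lem:dmr}: specifically, identifying the iterated ordinals $\pi^{M(y)}_{(\cN,s),(\cR,s)}(\tilde\beta)$ appearing on the $\cP^{M(y)}(\cR|\delta^\cR)$-side with the target ordinals $\beta$ defining $A$. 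This identification is where Lemma~\ref{lem:piinftyid} and the cofinality of the direct limit system do the essential work.
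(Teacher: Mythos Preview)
There is a genuine gap at the step you yourself flag as hardest. After applying Lemma~\ref{lem:dmr} with $\bar\xi = \tilde\beta$ and $t = \tilde t$ and then unwinding the forcing relation via $\cP^{M(y)}(\cR|\delta^{\cR})[G] = M_n(x)[g]$, what you actually obtain on the $\cR$-side is
\[
M_n(x)[g] \vDash \psi\bigl[\pi^{M(y)}_{(\cN,s),(\cR,s)}(\tilde\beta),\, \tilde t\bigr],
\]
not $M_n(x)[g] \vDash \psi[\beta,\bar t]$. The ordinal $\pi^{M(y)}_{(\cN,s),(\cR,s)}(\tilde\beta)$ is the image of $\tilde\beta$ in an \emph{intermediate} iterate $\cR$, not the direct-limit image $\beta = \pi_{(\cN,s),\infty}(\tilde\beta)$; likewise $\tilde t$ is a sequence of preimages sitting in $\cN$, not $\bar t$ itself. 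Lemma~\ref{lem:piinftyid} cannot bridge this: it asserts $\pi_\infty(\eta) = \eta^*$ for $\eta < \deltadlm$, where $\pi_\infty$ is the direct-limit embedding of $\dlm$ into the \emph{second-level} system $\cF^{*,M(y)}$ living in $\dlmhat[H]$. It says nothing about the partial first-level maps $\pi_{(\cN,s),(\cR,s)}$, so the ``reconciliation'' you invoke does not go through.

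The paper closes this by placing $\beta$ in the $t$-slot of Lemma~\ref{lem:dmr} rather than in the $\bar\xi$-slot. One then gets $\beta \in A$ iff $\dlmhat[H] \vDash \varphi[\beta^*, t^*]$, with the $*$-shift now on the $\dlmhat$-side. To conclude $A \in \dlmhat$ one must know that the map $\beta \mapsto \beta^*$ restricted to $\alpha$ lies in $\dlmhat$. This is exactly where Lemma~\ref{lem:piinftyid} is genuinely used (giving $\beta^* = \pi_\infty(\beta)$), together with a separate claim that $\pi_\infty \upharpoonright \alpha \in \dlmhat$ for every $\alpha < \deltadlm$; the latter holds because $\pi_\infty \upharpoonright \alpha$ is definable in $\dlmhat[H]$ from $\dlm$ alone, and $\Col(\omega,{<}\kappa_\infty)$ is homogeneous. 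Your argument is missing precisely this ingredient, and without it the definability of $A$ over $\dlmhat$ does not follow.
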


\begin{proof}
  By the internal definition of $\dlm$ from Lemma \ref{lem:dlmM(y)eq}
  we have that
  $V_{\deltadlm}^{\dlm} \subseteq V_{\deltadlm}^{\HODMn}$. For the
  other inclusion we first show the following claim.
\begin{claim}\label{cl:*}
  $\pi_\infty \upharpoonright \alpha \in \dlmhat$ for all
  $\alpha < \deltadlm$.
\end{claim}

\begin{proof}
  As $\alpha < \deltadlm$, there exists an $s \in [\Ord]^{<\omega}$ and a dlm-suitable $y_s$ such that, in $\dlmhat[H]$,  $\dlm \in \cG^{M(y)}$ and $\alpha < \gamma_{s^*}^{\dlm, M(y_s)}$. We have by definition that
  $\pi_\infty \upharpoonright \alpha =
  (\pi^{M(y_s)}_{(\dlm,s^*),\infty})^{\cF^*, M(y_s)} \upharpoonright
  \alpha$. As $ (\pi^{M(y_s)}_{(\dlm,s^*),\infty})^{\cF^*, M(y_s)} \upharpoonright
  \alpha$ does not depend on the choice of $y_s$, this implies $\pi_\infty \upharpoonright \alpha \in
  \HOD^{\dlmhat[H]}_{\dlm}$ and thus
  $\pi_\infty \upharpoonright \alpha \in \dlmhat$ by homogeneity of
  the forcing $\mathbb{P} = \Col(\omega, {<}\kappa_\infty)$.
\end{proof}
Now let $A \in V_{\deltadlm}^{\HODMn}$ be arbitrary. Let
$\alpha < \deltadlm$ be such that $A \subset \alpha$ is defined over
$M_n(x)[g]$ by a formula $\varphi$ with ordinal parameters from
$t \in [\Ord]^{<\omega}$ and let $\beta < \alpha$ be arbitrary. That
means $\beta \in A$ iff $M_n(x)[g] \vDash \varphi(\beta,t)$. Lemma \ref{lem:dmr} yields that this is
the case iff $\dlmhat[H] \vDash \varphi(\beta^*, t^*)$. Since
$\beta < \alpha < \deltadlm$, we have that
$\beta^* = \pi_\infty(\beta)$ by Lemma \ref{lem:piinftyid}. Moreover,
we have by Claim \ref{cl:*} that
$\pi_\infty \upharpoonright \alpha \in \dlmhat$. Therefore, it follows
by homogeneity of the forcing
$\mathbb{P} = \Col(\omega, {<}\kappa_\infty)$ that $A \in \dlmhat$
since $t^*$ is a fixed parameter in $\dlmhat$. Thus
$A \in V_{\deltadlm}^{\dlm}$, as desired.
\end{proof}

\section{The full $\HOD$ in $M_n(x)[g]$}\label{sec:fullHOD}

To compute the full model $\HODMn$, i.e., prove Theorem
\ref{thm:main}, we first show the following lemma. Recall that $\omega_2^{M_n(x)[g]} = \delta_\infty$.

\begin{lemma}\label{lem:HODeqMn(A)}
  $\HODMn = M_n(A)$ for some set $A \subseteq \omega_2^{M_n(x)[g]}$ with $A \in \HODMn$.
\end{lemma}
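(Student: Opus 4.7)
The natural candidate for $A$ is a coding of $V_{\omega_2^{M_n(x)[g]}}^{\HODMn}$. Set $\theta = \omega_2^{M_n(x)[g]}$, so $\theta = \deltadlm$. By Theorem~\ref{thm:HODuptoDelta}, $V_\theta^{\HODMn} = V_\theta^{\dlm}$, and since $\HODMn \vDash \ZFC$ this set has cardinality $\theta$ in $\HODMn$. The first step is to fix a bijection $f \in \HODMn$ from $\theta$ onto the transitive closure of $\{V_\theta^{\HODMn}\}$, and to let $A \subseteq \theta$ encode this transitive structure via $f$ and G\"odel pairing. Then $A \in \HODMn$ and $A \subseteq \omega_2^{M_n(x)[g]}$. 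The plan is to show $\HODMn = M_n(A)$ by proving both inclusions.

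For $M_n(A) \subseteq \HODMn$: by the assumption that $M_{n+1}^\#$ exists and is $(\omega,\omega_1,\omega_1)$-iterable in $V$, combined with the fact that $A$ is a set of ordinals of hereditary cardinality $\theta$, $M_n^\#(A)$ exists and is $(\omega,\omega_1,\omega_1)$-iterable in $V$. Its canonical iteration strategy is $\cQ$-structure guided by the $M_{n-1}^\#$-operator (or equivalently by recognising non-$n$-smallness on the common part of iteration trees), and hence is ordinal definable from $A$. Since $A \in \HODMn$, the entire class $M_n(A)$ therefore lies inside $\HODMn$.

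For $\HODMn \subseteq M_n(A)$: the approach is to organise $\HODMn$ as a proper class $A$-premouse with $n$ Woodin cardinals and the appropriate iterability, whence uniqueness of $M_n(A)$ forces equality. Below $\theta$ the two models agree by the choice of $A$. Above $\theta$ one invokes the derived model resemblance of Lemma~\ref{lem:dmr}: any $y \in \HODMn$ is definable over $M_n(x)[g]$ from ordinal parameters, and a suitable extension of Lemma~\ref{lem:dmr} mirrors this definition inside $\dlmhat[H]$ for an appropriate generic $H$ over $\dlmhat$. Since $\dlm$ is coded in $A$ and the iteration strategy $\Lambda$ is $\cQ$-structure guided and hence internally computable inside $M_n(A)$, the mirrored computation of $y$ takes place inside $M_n(A)$, giving $y \in M_n(A)$.

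The main obstacle will be extending the derived model resemblance to handle ordinal parameters above $\theta$, since Lemma~\ref{lem:dmr} as stated only handles $\bar\xi < \gamma_s^\cN$. This extension requires a careful analysis of how large ordinal parameters in $M_n(x)[g]$ correspond to data available in $M_n(A)$ through the direct limit system and its iteration strategy $\Lambda$, possibly by iterating higher up the uniform indiscernibles and passing to larger $s$. Once this is in place, uniqueness of the $M_n$-operator over $A$ yields the equality $\HODMn = M_n(A)$.
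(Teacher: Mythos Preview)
Your approach diverges substantially from the paper's, and there is a genuine gap. The paper does not take $A$ to be a code for $V_{\deltadlm}^{\HODMn}$; it takes $A$ to be the Vop\v{e}nka algebra $\bV$ of $M_n(x)[g]$ for making a real generic over $\HODMn$. The key point is that $x$ itself is $\bV$-generic over $\HODMn$ via some $G_x$ with $\HODMn[G_x]=\HOD_x^{M_n(x)[g]}$. One then shows $G_x$ is also generic over $M_n(\bV)$, and a $\cP$-construction argument gives $M_n(\bV)[G_x]=M_n(x)$; a symmetry argument then strips the generic to obtain $M_n(\bV)=\HODMn$. No extension of derived model resemblance is needed.

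Your proposed $A$ is likely too small. Your $A$ codes $\dlm|\deltadlm$, so $M_n(A)$ is essentially $\dlmhat=M_n(\dlm|\deltadlm)$. But the paper's final answer is $\HODMn=M_n(\dlmhat|\kappadlm,\Lambda)=M_n(\dlm|\deltadlm,\pi_\infty\upharpoonright\deltadlm)$, and the extra predicate $\Lambda$ (equivalently $\pi_\infty\upharpoonright\deltadlm$) is exactly what records the branches through \emph{maximal} trees on $\dlm$. Your claim that $\Lambda$ is ``$\cQ$-structure guided and hence internally computable inside $M_n(A)$'' is incorrect for maximal trees: by definition these have no $\cQ$-structure, and the paper identifies their branches only via $\pi_\infty$-realizability (Lemma~\ref{lem:LambdaHOD}). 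There is no argument here that $\pi_\infty\upharpoonright\deltadlm$, whose range is unbounded in $\deltadlm$, lies in $\dlmhat$; indeed, Corollaries~\ref{cor:HODF} and \ref{cor:HODpiinfty} are stated and proved \emph{using} the present lemma, so invoking them (or $\Lambda$) to prove it is circular.

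The second gap is in the inclusion $\HODMn\subseteq M_n(A)$. Lemma~\ref{lem:dmr} transfers a formula with parameters $(\xi,t^*)$ from $M_n(x)[g]$ to $\dlmhat$, but it does so by replacing the ordinal parameters $t$ with $t^*=F(t)$. To decode an $\OD$ set inside $M_n(A)$ you must therefore have access to $F$ (on the relevant ordinals) inside $M_n(A)$; this is precisely the missing predicate. Theorem~\ref{thm:HODuptoDelta} gets away with this only below $\deltadlm$, where each $\pi_\infty\upharpoonright\alpha\in\dlmhat$. Your proposed ``extension of DMR to ordinal parameters above $\theta$'' is not a minor technicality; it is the entire content of the step you are trying to prove, and the Vop\v{e}nka/$\cP$-construction route is how the paper circumvents it.
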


\begin{proof}
  Let $\bV$ denote the Vop\v{e}nka algebra in $M_n(x)[g]$ for making a
  real generic over $\HODMn$. By Vop\v{e}nka's theorem (see for example Theorem $15.46$ in \cite{Je03} or Theorem $9.0.1$ in
  \cite{La17}) there is a $\bV$-generic $G_x$ over $\HODMn$
  such that $x \in \HODMn[G_x]$ and in fact
  $\HODMn[G_x] = \HODMn_x$.
  
  \begin{claim}
    There is some $\tilde\bV \in \HODMn$ which is isomorphic to $\bV$ and a subset of $\omega_2^{M_n(x)[g]}$.\footnote{We would like to thank the anonymous referee for pointing out that this was overlooked in an earlier version of this article and for suggesting the argument we provide here.}
  \end{claim}
  
  \begin{proof}
    Work in $M_n(x)[g]$. Each real, i.e., element of $\Pot(\omega)$, can be coded by a countable ordinal and each set of reals can be coded by an ordinal $< \omega_2$. Forcing with the Vop\v{e}nka algebra $\bV$ is $\omega_2$-c.c. in $M_n(x)[g]$ as otherwise there would be an $\omega_2$ sequence of pairwise distinct non-empty sets of reals, contradicting $\CH$. The Vop\v{e}nka algebra is in $\HODMn$ and when considering $\HODMn[G_x]$ cardinals $\geq (\kappa^+)^{M_n(x)}$ are preserved. Since $(\kappa^{+\omega})^{M_n(x)}$ is below the least measurable cardinal of $M_n(x)$, $M_n(x)|(\kappa^{+\omega})^{M_n(x)}$ can be written as the $Lp^n$-stack of height $(\kappa^{+\omega})^{M_n(x)}$ above $x$ and is therefore by the argument in Lemma \ref{lem:closureSn} an element of $\HODMn_x = \HODMn[G_x]$. But the Vop\v{e}nka algebra is a subset of some ordinal $\alpha < (\kappa^{++})^{M_n(x)} = (\kappa^{++})^{\HODMn[G_x]} = (\kappa^{++})^{\HODMn}$, so there is some $\tilde\bV \in \HODMn$ which is isomorphic to $\bV$ and a subset of $(\kappa^+)^{M_n(x)}$.
  \end{proof}

  For the rest of this proof we write $\bV$ for the $\tilde\bV$ from the previous claim and show that $M_n(\bV) = \HODMn$.
 
  \begin{claim}
    $G_x$ is $\bV$-generic over $M_n(\bV)$.
  \end{claim}

  \begin{proof}
    The dense sets in question are elements of $\Pot(\bV)^{M_n(\bV)}$ and hence elements of $Lp^n(\bV) = \bigcup \{N \, | \, N \text{ is a countable } \bV\text{-premouse with } \rho_\omega(N) = \bV \text{ which is } n\text{-small, sound, and }(\omega,\omega_1,\omega_1)\text{-iterable}\}$. As $\bV \in \HODMn$, Lemma \ref{lem:closureSn} yields that $Lp^n(\bV) \in \HODMn$, which implies the claim.
  \end{proof}

  Let $\lambda$ be the least inaccessible of $M_n(x)$ above $\kappa$.

  \begin{claim}\label{cl:same2ndinacc}
    $V_{\lambda}^{M_n(x)} = V_{\lambda}^{M_n(\bV)[G_x]}$ and $\lambda$ is a cardinal in $M_n(\bV)[G_x]$.
  \end{claim}
  \begin{proof}
    Write $\kappa_0^{M_n(x)}$ and $\kappa_0^{M_n(\bV)}$ for the least measurable cardinal of $M_n(x)$ and $M_n(\bV)$ respectively. $M_n(\bV)$ and $M_n(\bV)[G_x]$ have the same least measurable cardinal. The proof of Lemma \ref{lem:closureSn} shows that $Lp^n(z) \in M_n(\bV)[G_x]$ for any $z \in V_{\delta_0}^{M_n(\bV)[G_x]}$, where $\delta_0$ denotes the least Woodin cardinal in $M_n(\bV)[G_x]$. As $M_n(x)|\kappa_0^{M_n(x)}$ is equal to the $Lp^n$-stack of height $\kappa_0^{M_n(x)}$ over $x$, it follows that \[ M_n(x)|\kappa_0^{M_n(x)} \subseteq M_n(\bV)[G_x]. \] Analogously, $M_n(\bV)|\kappa_0^{M_n(\bV)} \subseteq M_n(x)$ and in fact \[ M_n(\bV)[G_x]|\kappa_0^{M_n(\bV)} \subseteq M_n(x). \]

We are left with showing that $\lambda < \kappa_0^{M_n(\bV)}$ (as $\lambda < \kappa_0^{M_n(x)}$ is obvious). So suppose toward a contradiction that $\lambda \geq \kappa_0^{M_n(\bV)}$. Then \[ V_{\kappa_0^{M_n(\bV)}}^{M_n(\bV)[G_x]} = V_{\kappa_0^{M_n(\bV)}}^{M_n(x)}. \] Therefore, $\kappa_0^{M_n(\bV)}$ is not only a limit of inaccessible cardinals in $M_n(\bV)[G_x]$ but also in $M_n(x)$, contradicting our assumption that $\lambda \geq \kappa_0^{M_n(\bV)}$.
  \end{proof}


  \begin{claim}\label{cl:0}
    $V_\lambda^{M_n(\bV)} \subseteq \HODMn$.
  \end{claim}
  \begin{proof}
    This follows from the proof of Lemma \ref{lem:closureSn} as $M_n(\bV)|\lambda$ can be obtained as the $Lp^n$-stack of height $\lambda$ over $\bV$ and $\bV \in \HODMn$.
  \end{proof}
  
  Now we can show that the lemma holds below $\lambda$.

  \begin{claim}\label{cl:HODagreementuptolambda}
    $V_\lambda^{M_n(\bV)} = V_\lambda^{\HODMn}$.
  \end{claim}

  \begin{proof}
    We first show that $V_\lambda^{M_n(\bV)}[G_x] =
    V_\lambda^{\HODMn}[G_x]$. The inclusion $\subseteq$ follows from Claim
    \ref{cl:0}. For the other inclusion we have that
    \[\HOD^{M_n(x)[g]}[G_x] = \HOD_x^{M_n(x)[g]} \subseteq
    \HOD_x^{M_n(x)} \subseteq M_n(x), \] using
    the homogeneity and ordinal definability of the forcing
    $\Col(\omega, {<}\kappa)$. Therefore by Claim \ref{cl:same2ndinacc}
    \[V_\lambda^{\HODMn}[G_x] \subseteq V_\lambda^{M_n(x)} = V_\lambda^{M_n(\bV)}[G_x]. \] Finally, we argue that the equality
    $V_\lambda^{M_n(\bV)}[G_x] = V_\lambda^{\HODMn}[G_x]$ also holds true
    without adding the generic $G_x$. As by Claim \ref{cl:0} we have
    $V_\lambda^{M_n(\bV)} \subseteq V_\lambda^{\HODMn}$, we are again left
    with proving the other inclusion. Let $\bP = \bV \times \Col(\omega,{<}\kappa)$.
    Then $(G_x,g)$ is
    $\bP$-generic over both $V_\lambda^{M_n(\bV)}$ and $V_\lambda^{\HODMn}$,
    and $V_\lambda^{M_n(\bV)}[G_x,g] = V_\lambda^{\HODMn}[G_x,g]$. Let
    $a \in V_\lambda^{\HODMn}$ be a set of ordinals. Then there is a
    $\bP$-name $\sigma \in V_\lambda^{M_n(\bV)}$ such that
    $\sigma_{(G_x,g)} = a$. This is forced over $V_\lambda^{\HODMn}$,
    i.e., there is a $p \in \bP$ such that
    $V_\lambda^{\HODMn} \vDash \text{``}p \forces{}{} \sigma =
    \check{a}$''. Thus $V_\lambda^{M_n(\bV)}$ can compute the elements of
    $a$ using the forcing relation for $\bP$ below $p$. Hence
    $a \in V_\lambda^{M_n(\bV)}$, as desired.
  \end{proof}

  Now we are able to extend Claim \ref{cl:same2ndinacc} to the full models.

  \begin{claim}\label{cl:1}
    $M_n(\bV)[G_x] = M_n(x)$.
  \end{claim} 
  \begin{proof}
    Consider $M_n(x)[g]$ as a $V_\lambda^{M_n(x)[g]}$-premouse and note that it equals $M_n(V_\lambda^{M_n(x)[g]})$. We use $\cP^{M_n(x)[g]}(M_n(\bV)|\lambda)$ to denote the result of
    a $\cP$-construction in the sense of \cite{SchSt09} above
    $M_n(\bV)|\lambda$ inside the $V_\lambda^{M_n(x)[g]}$-premouse $M_n(x)[g]$. By Claim
    \ref{cl:same2ndinacc}, $V_\lambda^{M_n(\bV)}[G_x] = V_\lambda^{M_n(x)}$, so
    $V_\lambda^{M_n(\bV)}[G_x][g] = V_\lambda^{M_n(x)[g]}$ and this
    $\cP$-construction is well-defined. Moreover, the following
    argument shows that the construction never projects across
    $\lambda$.

    Assume toward a contradiction that there is a level $\cP$ of the
    $\cP$-construction above $M_n(\bV)|\lambda$ inside $M_n(x)[g]$ such
    that $\rho_\omega(\cP) = \rho < \lambda$. That means there is an
    $r\Sigma_{k+1}(\cP)$-definable set $a \subseteq \rho$ for some
    $k < \omega$ such that $a \notin \cP$. As by the proof of Claim
    \ref{cl:0}, $M_n(\bV)|\lambda \in \HODMn$ it follows by
    definability of the $\cP$-construction and of the extender
    sequence of $M_n(V_\lambda^{M_n(x)[g]})$ (see Lemma $1.1$ in \cite{Sch06} due to J. Steel) that
    $\cP \in \HODMn$. This means that in particular $a \in
    \HODMn$. But $a \subseteq \rho < \lambda$ and by Claim
    \ref{cl:HODagreementuptolambda},
    $V_\lambda^{\HODMn} = V_\lambda^{M_n(\bV)} = V_\lambda^\cP$, so
    $a \in \cP$. Contradiction.

    Now it follows by construction (see \cite{SchSt09}) that
    \[\cP^{M_n(x)[g]}(M_n(\bV)|\lambda)[G_x][g] = M_n(x)[g].\] But this
    yields that $\cP^{M_n(x)[g]}(M_n(\bV)|\lambda)[G_x] = M_n(x)$,
    without adding the generic $g$, by an argument as the one at the end of
    the previous claim. Moreover,
    $\cP^{M_n(x)[g]}(M_n(\bV)|\lambda) = M_n(\bV)$ and thus
    $M_n(\bV)[G_x] = M_n(x)$, as desired.
  \end{proof}

  This argument also shows the following claim.

  \begin{claim}\label{cl:0+}
     $M_n(\bV) \subseteq \HOD^{M_n(x)[g]}$.
   \end{claim}

  Now, the next claim follows from the first half of the proof of Claim \ref{cl:HODagreementuptolambda}.

  \begin{claim}\label{cl:2}
    $M_n(\bV)[G_x] = \HOD^{M_n(x)[g]}[G_x]$.
  \end{claim}

  Finally, the statement of Claim \ref{cl:2} also holds true without
  adding the generic $G_x$ by the argument at the end of the proof of
  Claim \ref{cl:HODagreementuptolambda}. Hence
  $M_n(\bV) = \HOD^{M_n(x)[g]}$, as desired.
\end{proof}

\begin{cor}\label{cor:HODF}
  Let $F(s) = s^*$ for $s \in [\Ord]^{<\omega}$. Then
  \[\HODMn = M_n(\dlm|\deltadlm, F \upharpoonright
    \deltadlm). \]
\end{cor}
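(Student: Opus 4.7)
The plan is to prove both inclusions by combining Lemma~\ref{lem:HODeqMn(A)} with the derived-model-resemblance machinery of Section~\ref{sec:dmr}.

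For $M_n(\dlm|\deltadlm, F \upharpoonright \deltadlm) \subseteq \HODMn$, the key point is that both coarse ingredients lie in $\HODMn$. The inclusion $\dlm|\deltadlm \in \HODMn$ is immediate from Theorem~\ref{thm:HODuptoDelta} together with the fact that the extender sequence of $\dlm$ below $\deltadlm$ can be read off the direct limit system $\cF$, which is OD in $M_n(x)[g]$; similarly $F \upharpoonright \deltadlm \in \HODMn$ because the direct limit maps $\pi_{(\cN,s),\infty}$ are OD in $M_n(x)[g]$. Closure of $\HODMn$ under the $M_n$-operator over this coarse pair then follows by iterating the stacking argument discussed after Lemma~\ref{lem:closureSn}.

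The reverse inclusion is the main step. By Lemma~\ref{lem:HODeqMn(A)} we may write $\HODMn = M_n(A)$ for some $A \subseteq \omega_2^{M_n(x)[g]} = \deltadlm$ in $\HODMn$, so it suffices to exhibit $A$ inside $M_n(\dlm|\deltadlm, F \upharpoonright \deltadlm)$. Fix a formula $\varphi$ and an ordinal tuple $\vec{t}$ defining $A$, so that $\beta \in A$ iff $M_n(x)[g] \vDash \varphi(\beta, \vec{t})$ for $\beta < \deltadlm$. Arguing just as in the proof of Theorem~\ref{thm:HODuptoDelta} (via Lemma~\ref{lem:dmr} and Lemma~\ref{lem:piinftyid}), this is equivalent to $\dlmhat[H] \vDash \varphi(\beta^*, \vec{t}^*)$, where $\beta^* = \pi_\infty(\beta)$ and $\vec{t}^*$ is a fixed tuple of ordinals in $\dlmhat$. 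Homogeneity of $\mathbb{P} = \Col(\omega, {<}\kappadlm)$ then reduces this to the $\dlmhat$-definable statement $\dlmhat \vDash \text{``}1 \forces{}{\mathbb{P}} \varphi(\check{\beta^*}, \check{\vec{t}^*})\text{''}$, a statement about $\beta^*$ and $\vec{t}^*$ definable over $\dlmhat = M_n(\dlm|\deltadlm)$. Since the assignment $\beta \mapsto \beta^*$ is precisely what $F \upharpoonright \deltadlm$ encodes, $A$ is definable over $M_n(\dlm|\deltadlm, F \upharpoonright \deltadlm)$ from the parameters $\vec{t}^*$ and $F \upharpoonright \deltadlm$, so $A$ lies in that model.

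The main obstacle is the contrast with Theorem~\ref{thm:HODuptoDelta}: there, initial segments $A \subseteq \alpha$ with $\alpha < \deltadlm$ could be treated because $\pi_\infty \upharpoonright \alpha \in \dlmhat$ by Claim~\ref{cl:*}, whereas $\pi_\infty \upharpoonright \deltadlm$ is not itself an element of $\dlmhat$. This is exactly why $F \upharpoonright \deltadlm$ must be added as an extra coarse parameter: it supplies $\pi_\infty \upharpoonright \deltadlm$ inside $M_n(\dlm|\deltadlm, F \upharpoonright \deltadlm)$, and this is what makes the homogeneity argument go through uniformly in $\beta < \deltadlm$.
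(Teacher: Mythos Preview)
Your argument has a genuine gap in the reverse inclusion. You correctly reduce ``$\beta \in A$'' to the statement $\dlmhat \vDash \text{``}1 \Vdash_{\mathbb{P}} \varphi(\check{\beta^*}, \check{\vec{t}^*})\text{''}$, and you correctly note that $\beta \mapsto \beta^*$ is encoded by $F \upharpoonright \deltadlm$. But you then slide from ``definable over $\dlmhat = M_n(\dlm|\deltadlm)$'' to ``definable over $M_n(\dlm|\deltadlm, F\upharpoonright\deltadlm)$''. These are \emph{different} proper class premice, built over different coarse bases, and there is no a priori reason the forcing relation of the former is computable inside the latter. The paper closes this gap explicitly: inside $M_n(\dlm|\deltadlm, F\upharpoonright\deltadlm)$ one runs the fully backgrounded construction $L[E](\dlm|\deltadlm)$ and compares it with $M_n(\dlm|\deltadlm)$; since this comparison takes place above $\deltadlm$, the ordinals $\xi < \deltadlm$ are fixed and elementarity transfers the forcing characterization of $A$ into $L[E](\dlm|\deltadlm)^{M_n(\dlm|\deltadlm, F\upharpoonright\deltadlm)}$, hence into $M_n(\dlm|\deltadlm, F\upharpoonright\deltadlm)$. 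Your proof is missing exactly this transfer step.

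There is a second, smaller gap in how you pass from ``each side contains the other's coarse base'' to equality of the models. The stacking argument after Lemma~\ref{lem:closureSn} only yields $Lp^n$-closure, i.e., it reaches up to the least measurable, not the full $M_n$-operator; so it does not directly give $M_n(\dlm|\deltadlm, F\upharpoonright\deltadlm) \subseteq \HODMn$, nor does $A \in M_n(\dlm|\deltadlm, F\upharpoonright\deltadlm)$ alone give $M_n(A) \subseteq M_n(\dlm|\deltadlm, F\upharpoonright\deltadlm)$. The paper handles this by showing (via the argument of Claim~\ref{cl:sameleastmeas}) that $M_n(A)$ and $M_n(\dlm|\deltadlm, F\upharpoonright\deltadlm)$ share the same $V_\gamma$ for a suitable inaccessible $\gamma$ below their common least measurable, and then rearranging both as $V_\gamma$-premice so that each equals $M_n(V_\gamma)$.
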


\begin{proof}
  Note that $\dlm|\deltadlm$ and $F \upharpoonright \deltadlm$ are elements of $\HODMn$ by construction. Let $\eta = \sup F \pwimg \deltadlm$ and let $\gamma$ be the least inaccessible cardinal of $M_n(\dlm|\deltadlm, F \upharpoonright \deltadlm)$ above $\eta$. Let $A \subseteq \omega_2^{M_n(x)[g]}$ be as
  in the statement of Lemma \ref{lem:HODeqMn(A)},
  i.e., such that $\HODMn = M_n(A)$. Moreover, let
  $\varphi$ be a formula defining $A$, i.e., $\xi \in A$ iff $M_n(x)[g]
  \vDash \varphi(\xi)$. Then, as $F(\xi) = \pi_\infty(\xi)$ for $\xi < \delta_\infty$ by Lemma \ref{lem:piinftyid},
  \begin{align*}
    \xi \in A \text{ iff } M_n(\dlm | \deltadlm) \vDash \text{``}1&
    \forces{\bP}{} \varphi(\pi_\infty(\xi)), \text{where } \pi_\infty \text{ is the direct limit}\\ &\text{embedding from the systems on }\dlm\text{''} 
  \end{align*}
  for $\bP = \Col(\omega, {<}\kappa_\infty)$. Consider
  $L[E](\dlm | \deltadlm)^{M_n(\dlm|\deltadlm, F \upharpoonright
    \deltadlm)}$, the result of a fully backgrounded
  extender construction in the sense of \cite{MS94} inside
  $M_n(\dlm|\deltadlm, F \upharpoonright \deltadlm)$ above
  $\dlm | \deltadlm$. The premice $M_n(\dlm | \deltadlm)$ and
  $L[E](\dlm | \deltadlm)^{M_n(\dlm|\deltadlm, F \upharpoonright
    \deltadlm)}$ successfully
  compare to a common proper class premouse without drops on the main
  branches. Since the iterations take place above $\deltadlm$,
  $\xi < \deltadlm$ is not moved and we have by elementarity
  \begin{align*}
    \xi \in A \text{ iff } L[E]&(\dlm |
    \deltadlm)^{M_n(\dlm|\deltadlm, F \upharpoonright
      \deltadlm)} \vDash \text{``}1 \forces{\bP}{}
   \varphi(\pi_\infty(\xi)), \text{where } \pi_\infty\\ &\text{is the direct limit embedding from the systems on }\dlm\text{''}. 
  \end{align*}
 Therefore it follows that
 $A \in M_n(\dlm|\deltadlm, F \upharpoonright \deltadlm)$.

 By the same argument as in the proof of Claim \ref{cl:same2ndinacc} in the proof of Lemma \ref{lem:HODeqMn(A)} we now obtain that $\gamma$ is also the least inaccessible cardinal above $\eta$ of $M_n(A)$ and the universes of $M_n(\dlm|\deltadlm, F \upharpoonright \deltadlm)$ and $M_n(A)$ agree up to $\gamma$. In particular, we can rearrange
  $M_n(\dlm|\deltadlm, F \upharpoonright \deltadlm)$ and
  $M_n(A)$ as
  $V_\gamma^{M_n(\dlm|\deltadlm, F \upharpoonright
  \deltadlm)}$-premice. As such it follows that the
  following equalities for classes (not structures) hold:
  \begin{align*}
    M_n(\dlm|\deltadlm, F \upharpoonright \deltadlm) & =
                                                                 M_n(V_\gamma^{M_n(\dlm|\deltadlm, 
                                                                 F
                                                                 \upharpoonright
                                                                 \delta)})
    \\  & = M_n(A) = \HODMn. 
  \end{align*}
\end{proof}

The following corollary follows immediately from Lemma
\ref{lem:piinftyid} and Corollary \ref{cor:HODF}.

\begin{cor}\label{cor:HODpiinfty}
  $\HODMn = M_n(\dlm|\deltadlm, \pi_\infty \upharpoonright
  \deltadlm)$.
\end{cor}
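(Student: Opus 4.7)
The plan is to combine Corollary \ref{cor:HODF}, which identifies $\HODMn$ with $M_n(\dlm|\deltadlm, F \upharpoonright \deltadlm)$ for $F(s)=s^{*}$, with Lemma \ref{lem:piinftyid}, which says that on ordinals below $\deltadlm$ the map $F$ is nothing other than $\pi_\infty$. Concretely, my aim is to show that the two predicates $F \upharpoonright \deltadlm$ and $\pi_\infty \upharpoonright \deltadlm$ carry the same information once $\dlm|\deltadlm$ is given as an additional parameter, so that the mouse-operator closures $M_n(\dlm|\deltadlm,-)$ coincide.

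First I would recall that $F$ is defined on $[\Ord]^{<\omega}$ by $F(s)=\{\alpha^{*}:\alpha\in s\}$, so for any finite sequence $s\in[\deltadlm]^{<\omega}$ the value $F(s)$ is determined componentwise from $\pi_\infty \upharpoonright \deltadlm$ via Lemma \ref{lem:piinftyid}, namely $F(s)=\{\pi_\infty(\alpha):\alpha\in s\}$. Conversely, given the predicate $F\upharpoonright\deltadlm$, one recovers $\pi_\infty\upharpoonright\deltadlm$ uniformly by $\pi_\infty(\eta)=$ the unique element of $F(\{\eta\})$, again by Lemma \ref{lem:piinftyid}. Hence each predicate is $\Delta_1$-definable from the other over $\dlm|\deltadlm$.

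Next I would invoke the standard fact that if $A$ and $B$ are coded sets with $A$ $\Delta_1$-definable from $B$ (and vice versa) over a fixed base premouse $\cN$, then the relativised mouse operators agree, i.e.\ $M_n(\cN,A)=M_n(\cN,B)$; this is because a fully backgrounded extender construction performed over one of them literally produces the other as a definable class, so the two constructions yield the same proper class premouse (with the same universe). Applied to $\cN=\dlm|\deltadlm$, $A=F\upharpoonright\deltadlm$, $B=\pi_\infty\upharpoonright\deltadlm$, this gives
\[
M_n(\dlm|\deltadlm,F\upharpoonright\deltadlm)\;=\;M_n(\dlm|\deltadlm,\pi_\infty\upharpoonright\deltadlm).
\]
Chaining with Corollary \ref{cor:HODF} then yields the desired identity $\HODMn=M_n(\dlm|\deltadlm,\pi_\infty\upharpoonright\deltadlm)$.

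I do not anticipate a serious obstacle here: both the interdefinability step and the invariance of the $M_n$ operator under a definable change of coding parameters are entirely routine once Lemma \ref{lem:piinftyid} is in hand. The only point worth being careful about is the book-keeping between $F$ viewed as a function on finite sequences and $\pi_\infty$ viewed as a function on ordinals, but Lemma \ref{lem:piinftyid} settles this pointwise, so the argument reduces to a one-line observation.
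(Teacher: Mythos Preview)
Your proposal is correct and follows essentially the same approach as the paper, which simply states that the corollary follows immediately from Lemma~\ref{lem:piinftyid} and Corollary~\ref{cor:HODF}. You have spelled out the (routine) interdefinability of $F\upharpoonright\deltadlm$ and $\pi_\infty\upharpoonright\deltadlm$ that the paper leaves implicit, but the argument is the same one-line observation.
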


We now consider the iteration strategy for $\dlm$. Let $\Lambda$ be
the restriction of $\Sigma_{M_{n+1}^{-}}$ to correctly guided finite
stacks $\vec{\cT}$ on $\dlm|\deltadlm$ such that
$\vec{\cT} \in \dlmhat|\kappa_\infty$, where $\kappa_\infty$ is the
least inaccessible cardinal in $\dlmhat$ above
$\deltadlm$.

\begin{lemma}\label{lem:LambdaHOD}
  $\Lambda \in \HODMn$. 
\end{lemma}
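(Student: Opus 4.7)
My plan is to define $\Lambda$ inside $\HODMn$ using $\cQ$-structure guidance, and then observe that all the ingredients needed to carry out this definition live in $\HODMn$. By Corollary \ref{cor:HODpiinfty} together with the stacking argument used in the proof of Lemma \ref{lem:closureSn}, we have $\dlmhat = M_n(\dlm | \deltadlm) \in \HODMn$, hence $\dlmhat | \kappa_\infty \in \HODMn$. So every correctly guided finite stack $\vec{\cT}$ in the domain of $\Lambda$ already lies in $\HODMn$; the task reduces to giving an $\HODMn$-internal definition of the branches $\Lambda$ picks.

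First, I would use the fact that the strategy $\Sigma_{M_{n+1}^-}$ is $\cQ$-structure guided, so the branch it picks at a limit stage $\lambda$ of some $\cT_i$ in $\vec{\cT}$ is characterized as the unique cofinal well-founded branch $b$ such that $\cQ(b, \cT_i \upharpoonright \lambda) = \cQ(\cT_i \upharpoonright \lambda)$, and by the correct guidance of $\vec{\cT}$ (Definition \ref{def:short}) this common $\cQ$-structure is an initial segment of $M_n(\cM(\cT_i \upharpoonright \lambda))$. Uniqueness of such a branch is standard (two distinct cofinal well-founded branches of a normal iteration tree cannot produce the same $\cQ$-structure, by the Zipper Lemma). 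In the maximal case handled in Definition \ref{def:fincorrectlyguidedstack}, no branch needs to be chosen; the next model is prescribed as $M_n(\cM(\cT_i)) | (\delta(\cT_i)^{+\omega})^{M_n(\cM(\cT_i))}$, which is again an $M_n(\cdot)$-expression.

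The crucial step is then to argue that, for any $\cT \in \HODMn$ with underlying common part $\cM(\cT) \in \HODMn$, the model $M_n(\cM(\cT))$ also lies in $\HODMn$. This is precisely what the stacking of the $Lp^n$-operation yields: the proof of Lemma \ref{lem:closureSn} shows that $y \mapsto Lp^n(y)$ is internal to $\HODMn$, and stacking it up through the height of $M_n(\cM(\cT))$ (which stops below the first measurable of the resulting mouse) produces $M_n(\cM(\cT))$ inside $\HODMn$. Consequently, at every limit stage of every tree in $\vec{\cT}$, the prescribed $\cQ$-structure is available in $\HODMn$, and the unique branch matching it can be defined there. Combining these branch choices gives a uniform $\HODMn$-definition of $\Lambda$, so $\Lambda \in \HODMn$.

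The main obstacle I expect is the bookkeeping: verifying that the $\cQ$-structure uniqueness really pins down the same branch that $\Sigma_{M_{n+1}^-}$ would pick, uniformly across all $\vec{\cT} \in \dlmhat | \kappa_\infty$, and that one does not need more of the iteration strategy than is visible from $\cQ$-structures inside $M_n(\cM(\cdot))$. This should however be a direct consequence of $\vec{\cT}$ being correctly guided in the sense of Definition \ref{def:short} and of $\dlm$ inheriting its strategy from the direct limit system of $\Sigma_{M_{n+1}^-}$-iterates of $M_{n+1}^-$, so no additional iterability input beyond what has already been established in Section \ref{sec:dlm} is required.
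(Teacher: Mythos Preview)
Your approach has a genuine gap: it handles only the short trees and entirely misses the maximal case, which is the whole point of the lemma. By Definition \ref{def:short}, a tree $\cT$ is \emph{maximal} precisely when there is no $\cQ$-structure $\cQ(\cT) \unlhd M_n(\cM(\cT))$; so for such $\cT$ your $\cQ$-structure-guided recipe produces nothing. You try to sidestep this by appealing to Definition \ref{def:fincorrectlyguidedstack} and saying ``no branch needs to be chosen'' in the maximal case, but that is a misreading of what $\Lambda$ is. The map $\Lambda$ is the restriction of the genuine strategy $\Sigma_{M_{n+1}^-}$; its value on a maximal $\cT$ is an actual cofinal well-founded branch $b$ through $\cT$, not merely the prescription of the next model. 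The content of $\Lambda \in \HODMn$ is exactly that these branches --- which are \emph{not} determined by $\cQ$-structures inside $M_n(\cM(\cT))$ --- can nevertheless be singled out in $\HODMn$. Your argument never addresses this.

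The paper's proof supplies the missing idea: for a maximal $\cT \in \dlmhat|\kappa_\infty$ one identifies $b = \Lambda(\cT)$ as the unique $(\pi_\infty \upharpoonright \deltadlm)$-realizable branch, i.e., the unique branch admitting an elementary $\sigma: \cM_b^\cT \rightarrow \dlm^{\cF^*}|\deltadlm^{\cF^*}$ with $\pi_\infty \upharpoonright \deltadlm = \sigma \circ i_b^\cT$. This works because $\cM_b^\cT$ is itself a $\Sigma_{M_{n+1}^-}$-iterate of $\dlm$, so it embeds further into the second direct limit $\dlm^{\cF^*}$, factoring $\pi_\infty$. Since $\pi_\infty \upharpoonright \deltadlm \in \HODMn$ by Corollary \ref{cor:HODpiinfty}, this realizability condition is expressible in $\HODMn$, and hence so is $\Lambda$. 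Your $Lp^n$-stacking observations are correct as far as they go, but they only show that the \emph{domain} of $\Lambda$ and the intermediate models along a stack are in $\HODMn$; the branches at maximal stages require the realization-into-the-direct-limit argument.
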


\begin{proof}
  Let $\cT$ be a maximal tree on $\dlm | \deltadlm$ with $\cT \in
  \dlmhat|\kappa_\infty$. Moreover, let $b = \Lambda(\cT)$. Let $\cR = \cM_b^\cT$ be the last model of $\cT^\smallfrown b$. Then
  $\cR \in \HODMn$. Moreover, let $\deltadlm^{\cF^*}$ be the least Woodin
  cardinal in $\dlm^{\cF^*}$, the direct limit of the system
  $\cF^{*,M(y)}$ for some/all dlm-suitable $M(y)$ in $\dlmhat[H]$. Then
  $\dlm^{\cF^*} | \deltadlm^{\cF^*}$ is an iterate of $\cR$. As
  $\pi_\infty \upharpoonright \deltadlm \in \HODMn$, we can identify
  $b$ inside $M_n(x)[g]$ as the unique branch through $\cT$ which is
  $(\pi_\infty \upharpoonright \deltadlm)$-realizable, i.e., such that
  there is an elementary embedding
  $\sigma: \cM_b^\cT \rightarrow \dlm^{\cF^*} | \deltadlm^{\cF^*}$
  with $\pi_\infty \upharpoonright \deltadlm = \sigma \circ i_b^\cT$.

  The same argument applies to pseudo-normal iterates $\cN$ of $\dlm$
  with $\cN|\delta^{\cN} \in \dlmhat|\kappa_\infty$ and maximal
  iteration trees $\cT$ on $\cN | \delta^{\cN}$ such that
  $\cT \in \dlmhat|\kappa_\infty$, hence $\Lambda \in \HODMn$.
\end{proof}

Similarly to Lemma $3.47$ in \cite{StW16} we finally need a method of
Boolean-valued comparison. As the proof is analogous we omit it.

\begin{lemma}\label{lem:BooleanComp}
  Let $H$ be $\Col(\omega, {<}\kappa_\infty)$-generic over $\dlmhat$,
  and let $\cQ$ be such that $\dlmhat[H] \vDash \text{``}\cQ \text{ is
    countable and }n\text{-suitable''}$. Then there is an $\cR$ such
  that
  \begin{enumerate}
  \item $\cR$ is a pseudo-normal iterate of $\cQ$,
  \item $\cR$ is a $\Sigma_{M_{n+1}^-}$-iterate of
    $\dlm$, and
  \item $\cR \in \dlmhat$. 
  \end{enumerate}
\end{lemma}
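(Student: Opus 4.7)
The plan is to mimic the Boolean-valued comparison argument of Lemma $3.47$ in \cite{StW16}. Let $\bP = \Col(\omega,{<}\kappadlm)$, fix a $\bP$-name $\dot\cQ \in \dlmhat$ for $\cQ$, and a condition $p \in \bP$ forcing $\dot\cQ$ to be countable and $n$-suitable. Inside $\dlmhat[H]$ one can apply the pseudo-comparison lemma (Lemma \ref{lem:pseudocomp}) to $\cQ$ and $\dlm$, iterating $\dlm$ by $\Sigma_{M_{n+1}^{-}}$, to produce correctly guided finite stacks on both sides with a common pseudo-normal iterate $\cR_H$. This $\cR_H$ already satisfies $(1)$ and $(2)$ but only lives in $\dlmhat[H]$; the real task is to produce such a common iterate $\cR \in \dlmhat$.

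For this one carries out the pseudo-comparison in a Boolean-valued fashion inside $\dlmhat$: enumerate in $\dlmhat$ the hypothetical interpretations $\dot\cQ^{H_q}$ for generics $H_q$ extending conditions $q \leq_\bP p$, and build a single iteration $\cU^*$ on $\dlm$ that successively pseudo-compares the current iterate of $\dlm$ against each such value in turn. The $\dlm$-side of $\cU^*$ must be guided by $\Lambda$; although $\Lambda$ is only in $\HODMn$ and not a priori in $\dlmhat$, its action on any individual tree $\cT \in \dlmhat|\kappadlm$ can be identified by the realizability criterion from the proof of Lemma \ref{lem:LambdaHOD}, namely that $\Lambda(\cT)$ is the unique cofinal well-founded branch $b$ for which there is an elementary $\sigma \colon \cM_b^\cT \to \dlm^{\cF^*}|\deltadlm^{\cF^*}$ with $\pi_\infty \upharpoonright \deltadlm = \sigma \circ i_b^\cT$. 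Since $\pi_\infty \upharpoonright \alpha \in \dlmhat$ for every $\alpha < \deltadlm$ by Claim \ref{cl:*}, this test is internal to $\dlmhat$ on the relevant initial segments. The maximal stages, where the current model is replaced by $M_n(\cM(\cT))|(\delta(\cT)^{+\omega})^{M_n(\cM(\cT))}$, are handled using closure of $\dlmhat$ under the $M_n$-operator on coarse objects (cf.\ Lemma \ref{lem:closureSn}).

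Let $\cR$ be the final model of $\cU^*$. Then $\cR \in \dlmhat$, and by construction $\cR$ is a $\Sigma_{M_{n+1}^{-}}$-iterate of $\dlm$, giving $(2)$ and $(3)$. For the specific generic $H$ with $\cQ = \dot\cQ^H$, the value $\cQ$ appears as one of the hypothetical interpretations compared against during the construction of $\cU^*$, so $\cR$ is a common pseudo-iterate with $\cQ$ and in particular a pseudo-normal iterate of $\cQ$, yielding $(1)$. The main obstacle is verifying that the maximal-stage replacements and the merging of iteration trees arising from different Boolean values all cohere into a single iteration of $\dlm$ via $\Lambda$; this is the content of the analogous argument in \cite{StW16} to which the authors defer.
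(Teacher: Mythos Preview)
Your plan matches the approach the paper defers to (Lemma~3.47 of \cite{StW16}), and the overall structure is right. Two refinements are worth noting. First, you cannot literally ``enumerate in $\dlmhat$ the hypothetical interpretations $\dot\cQ^{H_q}$ for generics $H_q$'': those generics do not exist in $\dlmhat$. The Boolean-valued comparison works instead via the forcing relation, which \emph{is} in $\dlmhat$: at each stage one applies the extender with least index $\xi$ such that some $q \leq_\bP p$ forces that the pseudo-comparison of $\dot\cQ$ with the current iterate of $\dlm$ uses the extender indexed $\xi$ at this stage. This yields a single tree on $\dlm$ in $\dlmhat$ that absorbs every such comparison in every generic extension. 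Second, your detour through the $\pi_\infty$-realizability criterion to identify $\Lambda$ inside $\dlmhat$ is unnecessary and a bit awkward (the test as stated in Lemma~\ref{lem:LambdaHOD} requires the target model $\dlm^{\cF^*}|\deltadlm^{\cF^*}$, not merely the initial segments $\pi_\infty\upharpoonright\alpha$). The tree on the $\dlm$-side is simply \emph{correctly guided}: branches at short limits are determined by $\cQ$-structures lying in $Lp^n(\cM(\cT))$, and maximal stages are handled by the $M_n$-operator, both of which are available in $\dlmhat$ directly without reference to $\pi_\infty$.
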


Finally, we can finish the proof of Theorem \ref{thm:main}.

\begin{thm}\label{thm:finalmain}
  \[\HODMn = M_n(\dlm|\deltadlm, \pi_\infty \upharpoonright \deltadlm)
    = M_n(\dlmhat|\kappadlm, \Lambda).\]
\end{thm}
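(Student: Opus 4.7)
The first equality is Corollary \ref{cor:HODpiinfty}, so the content of Theorem \ref{thm:finalmain} lies in the second equality $\HODMn = M_n(\dlmhat|\kappadlm,\Lambda)$. My plan is to prove the two inclusions separately, using the infrastructure already assembled.

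For $M_n(\dlmhat|\kappadlm,\Lambda) \subseteq \HODMn$: Theorem \ref{thm:HODuptoDelta} gives $\dlm|\deltadlm \in \HODMn$, and iterating the $Lp^n$-operation above $\dlm|\deltadlm$ as in Lemma \ref{lem:closureSn} places $\dlmhat|\kappadlm$ inside $\HODMn$, since $\kappadlm$ lies below the first Woodin of $\dlmhat$ above $\deltadlm$. Combined with Lemma \ref{lem:LambdaHOD}, the pair $(\dlmhat|\kappadlm,\Lambda)$ lies in $\HODMn$. To add the $n$ Woodins on top, one runs a fully backgrounded $L[E]$-construction over this pair inside $M_n(x)[g]$ organized as a $V_\kappa^{M_n(x)[g]}$-premouse, exactly as in the proof of Lemma \ref{lem:closureSn}; after a comparison argument the output is $M_n(\dlmhat|\kappadlm,\Lambda)$, and the definability of this construction inside $\HODMn$ yields the desired inclusion.

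For $\HODMn \subseteq M_n(\dlmhat|\kappadlm,\Lambda)$: By Corollary \ref{cor:HODpiinfty} this reduces to showing $\pi_\infty \upharpoonright \deltadlm \in M_n(\dlmhat|\kappadlm,\Lambda)$. Write $\cM^* = M_n(\dlmhat|\kappadlm,\Lambda)$ and pick a $\Col(\omega,{<}\kappadlm)$-generic $H$ over $\cM^*$. Inside $\cM^*[H]$ one reconstructs a direct limit system on $\dlm$: the strategy $\Lambda$ supplies branches through iteration trees on $\dlm$ lying in $\dlmhat|\kappadlm$, while $\dlmhat$ (definable from $\dlmhat|\kappadlm$ via $M_n$-construction) supplies the auxiliary predicates $T_{s,k}^\cN$ needed to define $s$-iterability. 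This produces an internal direct limit embedding $\pi^\Lambda : \dlm \to \dlm^\Lambda$ in $\cM^*[H]$. Running the proof of Lemma \ref{lem:piinftyid} inside $\cM^*[H]$ gives $\pi^\Lambda(\eta) = \eta^*$ for every $\eta < \deltadlm$, matching the external $\pi_\infty$. Hence $\pi_\infty \upharpoonright \deltadlm$ is ordinal-definable in $\cM^*[H]$ from $\Lambda$ and $\dlmhat|\kappadlm$, and homogeneity of $\Col(\omega,{<}\kappadlm)$ places it inside $\cM^*$, as required.

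The main obstacle is verifying that the internal direct limit system built in $\cM^*[H]$ yields the same embedding on $\deltadlm$ as the original external $\pi_\infty$. The crucial tool is the Boolean-valued comparison of Lemma \ref{lem:BooleanComp}: every countable $n$-suitable premouse appearing in $\cM^*[H]$ can be absorbed into a pseudo-normal $\Lambda$-iterate of $\dlm$, confining the internal system to $\Lambda$-iterates of $\dlm$ and forcing the directedness and cofinality arguments to reduce entirely to data computable from $\Lambda$ and $\dlmhat|\kappadlm$. Once this is in place, the internal and external direct limit embeddings must agree on $\deltadlm$ by the uniqueness clause of Lemma \ref{lem:piinftyid}, completing the proof.
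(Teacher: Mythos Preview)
Your proposal is correct and follows essentially the same route as the paper: both directions hinge on $Lp^n$-stacking to get $\dlmhat|\kappadlm$ into $\HODMn$, on Lemma \ref{lem:LambdaHOD} for $\Lambda$, and crucially on the Boolean-valued comparison of Lemma \ref{lem:BooleanComp} to recover $\pi_\infty\upharpoonright\deltadlm$ from $(\dlmhat|\kappadlm,\Lambda)$.

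The only noteworthy difference is a small detour in your reverse inclusion. You pass to a generic extension $\cM^*[H]$, rebuild a direct limit system there, and then invoke homogeneity of $\Col(\omega,{<}\kappadlm)$ to pull $\pi_\infty\upharpoonright\deltadlm$ back into $\cM^*$. The paper avoids this: it observes directly that the system of all $\Lambda$-iterates of $\dlm$ lying in $\dlmhat|\kappadlm$, ordered by the comparison maps, already lives in $M_n(\dlmhat|\kappadlm,\Lambda)$ without any generic extension, and that Lemma \ref{lem:BooleanComp} makes this system cofinal in $\cF^{*,M(y)}$; hence its direct limit map \emph{is} $\pi_\infty$. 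This yields $\pi_\infty\upharpoonright\deltadlm \in M_n(\dlmhat|\kappadlm,\Lambda)$ immediately, with no appeal to homogeneity. Your argument is not wrong, just slightly more circuitous; the paper then closes the equality of the two $M_n$-models by the same bootstrapping as in Corollary \ref{cor:HODF}.
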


\begin{proof}
  $\HODMn = M_n(\dlm|\deltadlm, \pi_\infty \upharpoonright \deltadlm)$
  is Corollary \ref{cor:HODpiinfty}. Moreover, 
  $M_n(\dlm|\deltadlm, \pi_\infty \upharpoonright \deltadlm) =
  M_n(\dlmhat|\kappadlm, \Lambda)$ follows from Lemma
  \ref{lem:BooleanComp} as follows. First, $\Lambda \in M_n(\dlm|\deltadlm, \pi_\infty \upharpoonright \deltadlm) = \HODMn$ and $\dlmhat|\kappa_\infty \in M_n(\dlm|\deltadlm, \pi_\infty \upharpoonright \deltadlm)$ by considering the $Lp^n$-stack on $\dlm|\deltadlm$. The direct limit of $\cF^{*,M(y)}$ for some $M(y)$ is the same as
  the direct limit of all $\Lambda$-iterates of $\dlm$ which are an
  element of $\dlmhat|\kappadlm$ via the comparison
  maps. Moreover, we have that $\pi_\infty$ is the canonical direct
  limit map of this system and therefore definable from
  $\dlmhat | \kappadlm$ and $\Lambda$. So
  $\pi_\infty \upharpoonright \deltadlm \in M_n(\dlmhat|\kappadlm,
  \Lambda)$. Now, $M_n(\dlm|\deltadlm, \pi_\infty \upharpoonright \deltadlm) = M_n(\dlmhat|\kappadlm, \Lambda)$ follows analogous to the proof of Corollary \ref{cor:HODF}.
\end{proof}

Note that Theorem \ref{thm:HODuptoDelta} and Lemma \ref{lem:HODeqMn(A)} together imply Corollary \ref{cor:main}, i.e., that the $\GCH$ holds in $\HODMn$. Finally, most of the arguments we gave in this and the previous sections generalize with only small changes to more arbitrary canonical self-iterable inner models, e.g. $M_\omega$, $M_{\omega +42}$. We leave the details to the reader.

\bibliographystyle{abstract}
\bibliography{References} 

\end{document}